\documentclass{amsart}

\usepackage{amsmath}
\usepackage{amssymb}
\usepackage{amsthm}
\usepackage[all]{xypic}
\usepackage{fullpage}
\usepackage{verbatim}
\usepackage[normalem]{ulem}
\usepackage[usenames]{color}
\usepackage{url}
\usepackage[backref=page]{hyperref}



\numberwithin{equation}{section}
\theoremstyle{plain}
\newtheorem{theorem}[equation]{Theorem}
\newtheorem{proposition}[equation]{Proposition}
\newtheorem{corollary}[equation]{Corollary}
\newtheorem{lemma}[equation]{Lemma}

\theoremstyle{definition}
\newtheorem{defn}[equation]{Definition}

\newtheorem{claim}[equation]{Claim}
\newtheorem{convention}[equation]{Convention}

\theoremstyle{remark}




\newcommand{\beq}{\begin{equation}}
\newcommand{\eeq}{\end{equation}}


\newcommand{\blank}{\mbox{$\underline{\makebox[10pt]{}}$}}

\newcommand{\st}{\left\vert\right.}

\newcommand{\bbar}[1]{\overline{#1}}




\DeclareMathOperator{\Hom}{{Hom}}

\DeclareMathOperator{\Tor}{Tor}
\DeclareMathOperator{\Aut}{{Aut}}
\DeclareMathOperator{\Proj}{Proj}
\DeclareMathOperator{\Spec}{Spec}

\DeclareMathOperator{\reg}{reg}

\DeclareMathOperator{\im}{Im}

\DeclareMathOperator{\Skl}{Skl}

\DeclareMathOperator{\gr}{gr}

\DeclareMathOperator{\op}{op}
\DeclareMathOperator{\Ann}{Ann}
\DeclareMathOperator{\Sym}{Sym}
\DeclareMathOperator{\alg}{alg}


\newcommand{\sh}{\mathcal}
\newcommand{\mc}{\mathcal}
\newcommand{\mf}{\mathfrak}


\DeclareMathOperator{\shSym}{\mathcal{S}\!\mathit{ym}}
\renewcommand{\Lsh}{\mathcal{L}}

\newcommand{\struct}{\mathcal{O}}
\newcommand{\kk}{{\Bbbk}}


\newcommand{\ZZ}{{\mathbb Z}}

\newcommand{\PP}{{\mathbb P}}
\newcommand{\NN}{{\mathbb N}}

\newcommand{\wt}{\widetilde}

\newcommand{\sF}{\mc{F}}
\newcommand{\sG}{\mc{G}}
\newcommand{\sH}{\mc{H}}
\newcommand{\sI}{\mc{I}}
\newcommand{\sJ}{\mc{J}}
\newcommand{\sK}{\mc{K}}
\newcommand{\sL}{\mc{L}}
\newcommand{\sM}{\mc{M}}
\newcommand{\sN}{\mc{N}}
\newcommand{\sO}{\mc{O}}

\newcommand{\sR}{\mc{R}}
\newcommand{\sS}{\mc{S}}

\newcommand{\pp}{positivity parameter}

\DeclareMathOperator{\rgr}{gr-\!}

\DeclareMathOperator{\rGr}{Gr-\!}

\DeclareMathOperator{\lmod}{\!-mod}
\DeclareMathOperator{\lMod}{\!-Mod}

\DeclareMathOperator{\lqgr}{\!-qgr}

\DeclareMathOperator{\rQgr}{Qgr-\!}
\DeclareMathOperator{\rqgr}{qgr-\!}

\DeclareMathOperator{\rTors}{Tors-\!}

\newcommand{\Xt}{X_{\infty}}
\newcommand{\Zt}{Z_{\infty}}
\newcommand{\Yt}{Y_{\infty}}

\newcommand{\Vt}{V_{\infty}}
\newcommand{\on}{\operatorname}

\DeclareMathOperator{\Cosupp}{Cosupp}
\newcommand{\dra}{\dashrightarrow}
\newcommand{\hra}{\hookrightarrow}
\newcommand{\ssm}{\smallsetminus}
\DeclareMathOperator{\Supp}{Supp}

\title{Moduli spaces for point modules on na\"ive blowups}
\author{Thomas A. Nevins}
\address{Department of Mathematics\\University of Illinois at Urbana-Champaign\\Urbana, IL 61801 USA}
\email{nevins@illinois.edu}
\author{Susan J. Sierra}
\address{School of Mathematics, University of Edinburgh, Edinburgh EH9 3JZ, United Kingdom.}
\email{s.sierra@ed.ac.uk}
\begin{document}

\begin{abstract}
The {\em na\"ive blow-up algebras} developed by Keeler-Rogalski-Stafford \cite{KRS}, after examples of Rogalski \cite{R-generic}, are the first known class of connected graded
algebras that are noetherian but not strongly noetherian.  This failure of the strong noetherian property is intimately related to the failure of the point modules over
such algebras to behave well in families: puzzlingly, there is no fine moduli scheme for such modules, although  point modules correspond bijectively with the points of a projective variety $X$.  We give a geometric structure to this bijection and prove that the variety $X$ is a coarse moduli space for point modules.
We also describe the natural moduli stack $\Xt$ for {\em embedded point modules}---an analog of a ``Hilbert scheme of
one point''---as an infinite blow-up of $X$ and establish good properties of $\Xt$.   The natural map $\Xt\rightarrow X$ is thus a kind of ``Hilbert-Chow morphism of one point"
for the na\"ive blow-up algebra.
\end{abstract}

\maketitle


\section{Introduction}\label{INTRO}
 One of the important achievements of noncommutative projective geometry is the classification of noncommutative projective planes,  such as the 3-dimensional Sklyanin algebra $\Skl_3$, by Artin-Tate-Van den Bergh \cite{ATV1990}.  More formally, these are {\em Artin-Schelter regular algebras} of dimension 3:  noncommutative graded rings that are close analogs of a commutative polynomial ring in 3 variables; see \cite{SV} for a discussion.  
The key method of  Artin-Tate-Van den Bergh is to study {\em point modules}; that is, cyclic graded modules with the Hilbert series of a point in projective space.  Given a noncommutative projective plane $R$,
Artin-Tate-Van den Bergh describe a moduli scheme for its point modules.  This allows them to construct 
a homomorphism from $R$ to a well-understood ring,  providing  a first step in describing the structure of the noncommutative plane itself.

The techniques described above  work in a more general context. 
Let $\kk$ be an algebraically closed field; we assume $\kk$ is uncountable, although for some of the results quoted this hypothesis is unnecessary. 
 A $\kk$-algebra $R$ is said to be {\em strongly noetherian} if for any commutative noetherian $\kk$-algebra $C$, the tensor product $R\otimes_\kk C$ is
again noetherian.
By a general result of Artin-Zhang \cite[Theorem~E4.3]{AZ2001}, if $R$ is a strongly noetherian $\NN$-graded $\kk$-algebra, then
its point modules are parameterized by a projective scheme.   Rogalski-Zhang \cite{RZ2008}  used this result to extend the method of \cite{ATV1990} to 
  strongly noetherian connected graded $\kk$-algebras  that are generated in degree 1.  (An $\NN$-graded $\kk$-algebra $R$ is {\em connected graded} if $R_0 = \kk$.)  Their method constructs a map from the algebra to a twisted homogeneous coordinate ring (see Section~\ref{BACKGROUND} for definitions) on the scheme $X$ parameterizing point modules.  For example, Sklyanin algebras are strongly noetherian, and here $X$ is an elliptic curve.  The homomorphism here gives the well-known embedding of an elliptic curve in a noncommutative $\PP^2$.  

Although it was believed for a time that all connected graded noetherian algebras would be strongly noetherian, Rogalski \cite{R-generic} showed this was not the case.  His example was generalized in joint work with Keeler and Stafford \cite{KRS, RS-0} to give a geometric construction of a beautiful class of noncommutative graded algebras, known as {\em na\"ive blow-ups}, that are noetherian but not strongly noetherian.  Along the way, they  showed that point modules for na\"ive blow-ups---viewed as objects of noncommutative projective geometry, in a way we 
make precise below---cannot behave well in families: there is no fine moduli scheme of finite type for such modules.

In the present paper, we systematically develop the moduli theory of point modules for the na\"ive blow-ups $S$ of \cite{KRS, RS-0}.  Roughly speaking, we show that there
is an analog of a ``Hilbert scheme of one point on $\operatorname{Proj}(S)$'' that is an infinite blow-up of a projective variety.  This
infinite blow-up is quasi-compact and noetherian as an {\em fpqc-algebraic} stack (a notion we make precise in
Section \ref{STACKS}).  Furthermore, we show there is a {\em coarse} ``moduli space
for one point on $\operatorname{Proj}(S)$''---it is, in fact, the projective variety from which the na\"ive blowup was constructed.  These are the first descriptions in the literature of moduli structures for 
point modules on a na\"ive blow-up.

More precisely, let $X$ be a projective $\kk$-variety of dimension at
least $2$, let $\sigma$ be an automorphism of $X$, and let $\Lsh$ be a
$\sigma$-ample (see Section~\ref{BACKGROUND}) invertible sheaf on $X$.  We
follow the standard convention that
$\sL^{\sigma}:= \sigma^* \sL$.
 Let $P \in X$ (in the body of the paper we let $P$  be any 0-dimensional subscheme of $X$), and assume that the $\sigma$-orbit of  $P$ is {\em critically dense}: that is, it is infinite and every infinite subset is Zariski dense.  For $ n\geq 0$, let
\[ \sh{I}_n := \sh{I}_P \sh{I}_P^{\sigma} \cdots \sh{I}_P^{\sigma^{n-1}}
\;\;\; \text{and} \;\;\;
\sL_n := \sL \otimes \sL^{\sigma} \otimes \cdots \otimes \sL^{\sigma^{n-1}}.\]
Define 
$\sh{S}_n := \sh{I}_n \otimes \Lsh_n$
and let
\[S : = S(X, \sL, \sigma, P) := \bigoplus_{n \geq 0} H^0(X, \sh{S}_n).\]
The algebra $S$ is the na\"ive blow-up associated to the data $(X, \sL, \sigma, P)$.

 If  $\sL$ is sufficiently ample, then $S$ is generated in degree one; alternatively, a sufficiently large Veronese of $S$ is always generated in degree one.  
We will assume throughout that $S$ is generated in degree one.

A {\em point module} is a graded cyclic $S$-module $M$ with Hilbert series 
$1 + t + t^{2} + \cdots$.  We say $M$ is an {\em embedded point module} if we are given, in addition, a surjection $S \rightarrow M$
of graded modules.  Two embedded point modules $M$ and $M'$ 
are {\em isomorphic} if there
is an $S$-module isomorphism from $M$ to $M'$ that intertwines the maps from $S$.   

We begin by 
 constructing a moduli stack for embedded point modules.
 Recall that $\Xt$ is a {\em fine moduli space} (or stack) for embedded point modules if there is an $S$-module quotient $S\otimes_\kk {\mathcal O}_{\Xt} \rightarrow M$ that is a universal family for point modules: that is,  $M$ is an $\Xt$-flat family of embedded $S$-point modules, with the property
that, if $S\otimes_\kk C \rightarrow M'$ is any $C$-flat family of embedded point modules for a commutative $\kk$-algebra $C$, then there is a morphism $\operatorname{Spec}(C)\xrightarrow{f} \Xt$ and an isomorphism $f^*M \cong M'$ of families of 
embedded $S$-point modules.  
Let
$X_n $ be the blowup of $X$ at $\sI_n$; there is an inverse system $\dots \rightarrow X_n\rightarrow X_{n-1}\rightarrow\dots \rightarrow X$
of schemes.  Let $\Xt := \underset{\longleftarrow}{\lim}\, X_n$.
  This inverse limit exists as a stack.  More precisely,
in Definition \ref{fpqc alg stack}, we introduce the notion of an {\em fpqc-algebraic} stack.  We then have:

\begin{theorem}\label{ithm-fine}
The inverse limit $\Xt$ is a noetherian fpqc-algebraic stack.  The morphism $\Xt\rightarrow X$ is quasicompact.  
Moreover, $\Xt$ is a fine moduli space for embedded $S$-point modules.
\end{theorem}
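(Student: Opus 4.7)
The plan is to prove the theorem in three stages: first to represent a truncated version of the functor by each $X_n$; then to take the limit in the fpqc-algebraic framework to obtain $\Xt$ with the claimed structural properties; and finally to patch truncations to verify the universal property for full embedded point modules.

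For the first stage, I would work with length-$(n+1)$ embedded truncated point modules: graded quotients $S \twoheadrightarrow M$ with $\dim_\kk M_i = 1$ for $0 \le i \le n$ and $M_j = 0$ for $j > n$. Reading the kernel sheaf-theoretically through the identification $S_i = H^0(X, \sI_i \otimes \sL_i)$, I would show (in families over an arbitrary base $T$, degree-by-degree flat) that such data untwist to an invertible quotient of $\sI_n$ on $X_T$, and thus, by the universal property of the blow-up $X_n = \mathrm{Bl}_{\sI_n}(X)$, to a $T$-point of $X_n$. This identifies $X_n$ as a fine moduli space for length-$(n+1)$ embedded truncated point modules.

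For the second stage, each transition $X_{n+1} \to X_n$ is projective (a blow-up at a coherent ideal), so the inverse system is of the type accommodated by Definition \ref{fpqc alg stack}. Its limit $\Xt = \varprojlim X_n$ therefore exists as a noetherian fpqc-algebraic stack, and quasi-compactness of $\Xt \to X$ descends from quasi-compactness of each $X_n \to X$ via a compatible system of quasi-compact fpqc atlases. This stage should be largely a formal application of the stack formalism developed earlier in the paper, with the main content being the check that such compatible atlases can be chosen.

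For the third stage, given a $C$-flat family $S \otimes_\kk C \twoheadrightarrow M'$, truncation in degrees $\le n$ produces a $C$-flat length-$(n+1)$ truncated family and hence, by Stage 1, a morphism $\Spec C \to X_n$; these are compatible as $n$ grows, so assemble into a unique $\Spec C \to \Xt$. Conversely, pulling back the universal truncated quotients from each $X_n$ to $\Xt$ and taking the direct limit produces a graded $S$-module quotient on $\Xt$; critical density of the $\sigma$-orbit of $P$ is invoked to guarantee that this module is $\Xt$-flat and has Hilbert series $1 + t + t^2 + \cdots$ at every point.

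The principal obstacle is Stage 1: establishing the family-theoretic correspondence between truncated point modules and invertible quotients of $\sI_n \otimes \sL_n$, and in particular verifying the correct interaction with $\sigma$-twisting across degrees $i \le n$. On closed points, embedded point modules of $S$ are in bijection with closed points of $X$, but this bijection degenerates near the $\sigma$-orbit of $P$; the $X_n$ are designed precisely to resolve that degeneration at finite level, and one must check that the resolution extends correctly to families over arbitrary bases. Once this is pinned down, Stages 2 and 3 should follow essentially formally from the blow-up and stack formalisms together with critical density.
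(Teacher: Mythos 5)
Your Stage 1 is where the proposal breaks, and it breaks in a way that is essentially the point of the whole subject. The blow-up $X_n$ does \emph{not} represent the functor of length-$(n+1)$ embedded truncated point modules over $S$. Already for $n=1$ there are no relations, so the truncated point scheme is all of $\PP(S_1^\vee)$, a projective space of dimension $\dim_\kk H^0(X,\sI_P\sL)-1$, whereas $X_1$ is the blow-up of $X$ at $P$; these are not isomorphic. In general the finite-level fine moduli space for truncated $S$-point modules is the scheme ${}_{0}Y_n$ cut out by multilinear equations in $\PP(S_1^\vee)^{\times n}$ (Proposition \ref{prop-likeATV}), and the kernel of a truncated quotient $S\otimes C\twoheadrightarrow M$ does \emph{not} in general generate a colength-one subsheaf of $\sI_n\sL_n$ (the generated subsheaf can have cokernel of larger length, or positive-dimensional support), so there is no morphism ${}_0Y_n\to X_n$ at all; the maps go the other way. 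What is true (Propositions \ref{prop-Z} and \ref{prop:immersion of point schemes}) is that $X_n$ is the \emph{relevant component} of the truncated point scheme $Z_n$ of the bimodule algebra $\sS$, and that taking global sections gives a closed immersion $Z_n\hookrightarrow Y_n$ which is generally not surjective. Note also that even at the sheaf level, colength-one subsheaves of $\sI_n$ are classified by $\Proj\shSym_X\sI_n$, of which $X_n$ is again only the relevant component (Lemma \ref{lem-ideals}), so ``invertible quotient of $\sI_n$ $\Rightarrow$ point of $X_n$'' is not available either.

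Consequently the theorem cannot be obtained as finite-level representability plus a formal limit. The fine moduli space for embedded $S$-point modules is, formally by fpqc descent, the limit $\varprojlim {}_0Y_n$ (Theorem \ref{fine moduli}); the substance of the theorem is that this limit collapses onto $\varprojlim X_n=\Xt$ even though no $Y_n$ collapses onto $X_n$. Proving that requires: (a) uniform relative Castelnuovo--Mumford regularity bounds over arbitrary noetherian bases (Lemma \ref{lem-posparam-correct}, Corollary \ref{cor-ptideal}) to show that global sections carry $\sS_C$-point ideals to $S_C$-point ideals injectively; and (b) an argument using right ampleness of $\{p^*\sS_n\}$ and critical density to show that every $S_C$-point module over an Artinian local $C$ sheafifies to an $\sS_C$-point module, giving the isomorphism $\varprojlim Z_n\cong\varprojlim Y_n$ (Theorem \ref{thm:ptschemes}). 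Your Stage 2 is in the right spirit, but note that noetherianity and fpqc-algebraicity of the limit are also not formal: they require constructing an explicit affine atlas via countably many carefully chosen hypersurfaces, using uncountability of $\kk$ and critical density (Proposition \ref{prop-algspace}). So the proposal correctly locates the difficulty in the family-theoretic comparison, but the mechanism it proposes for resolving it (finite-level representability by $X_n$) is false, and the actual resolution happens only in the limit.
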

We have been told that similar results were known long ago to M. Artin; however, they seem not to have been very
widely known even among experts, nor do they seem to have appeared in the literature.

Note that the stack $\Xt$ is discrete: its points have no stabilizers.  Thus, 
$\Xt$ is actually a {\em $\kk$-space} in the terminology of \cite{LMB}; in particular, this justifies our use of the phrase ``fine moduli space'' in the statement of the theorem.  However, $\Xt$ does not seem to have an \'etale cover by 
a scheme, and hence does not have the right to be called an algebraic space.

We recall that, by definition, the noncommutative projective scheme associated to $S$  is the quotient category
$\operatorname{Qgr-}S = \operatorname{Gr-}S/\operatorname{Tors-}S$ of graded right $S$-modules by the full subcategory of locally
bounded modules.  A {\em point object} in $\operatorname{Qgr-}S$ is the image of (a shift of) a point module.  If $S$ is a commutative graded algebra generated in degree 1, $\operatorname{Qgr-}S$ is equivalent to the category of quasicoherent sheaves on $\operatorname{Proj}(S)$; this justifies thinking of $\operatorname{Qgr-}S$ as the   noncommutative analog of a
projective scheme.

If $R$ is strongly noetherian and generated in degree 1, then a result of Artin-Stafford \cite[Theorem~10.2]{KRS} shows that point objects of $\rQgr R$ are parameterized by the same projective scheme $X$ that parameterizes embedded point modules.  On the other hand, for na\"ive blowups $S= S(X, \sL, \sigma, P)$ as above, 
Keeler-Rogalski-Stafford show:
\begin{theorem}[\cite{KRS}, Theorem~1.1] 
The algebra $S$ is noetherian but not strongly noetherian.  
Moreover, there is no fine moduli scheme of finite type over $\kk$ parameterizing point objects of $\rQgr S$.
\end{theorem}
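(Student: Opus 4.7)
The plan is to address the three assertions—noetherianness, failure of the strong noetherian property, and non-existence of a finite-type fine moduli scheme—in sequence, exploiting an asymptotic ampleness regime for the twisted ideal sheaves $\sS_n = \sI_n \otimes \Lsh_n$ together with the critical density of the orbit $\{\sigma^n P\}_{n \geq 0}$.

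For the noetherian property, I would first establish an asymptotic ampleness lemma: for $n \gg 0$, the sheaf $\sS_n$ is globally generated, $H^i(X, \sS_n) = 0$ for $i > 0$, and the multiplication $H^0(X, \sS_m) \otimes H^0(X, \sS_n) \twoheadrightarrow H^0(X, \sS_{m+n})$ is surjective for $m, n \gg 0$. The critical density of the orbit is essential here: it prevents $\sI_n$ from accumulating excess vanishing along any proper closed subvariety, so products of sections still exhaust the graded pieces. Given this, one compares $S$ with the twisted homogeneous coordinate ring $B = B(X, \Lsh, \sigma)$, which is classically noetherian by Artin--Van den Bergh. A standard argument then transfers the noetherian property: any homogeneous right ideal of $S$ corresponds, after passing to tails, to a coherent subsheaf of a finite direct sum of $\Lsh_n$, which is automatically finitely generated, and finite generation lifts back to $S$. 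The principal technical hurdle is verifying the surjective multiplication in the presence of the $\sI_n$; this requires Castelnuovo--Mumford–type regularity estimates that carefully track how $\sigma$ permutes the blow-up loci.

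For the failure of the strong noetherian property, the idea is to exploit the infinite blow-up tower $\cdots \to X_n \to \cdots \to X$ directly. Take a test algebra such as $C = \kk[t]$ and construct a $C$-flat family of embedded point modules whose vanishing locus drifts along the $t$-direction through infinitely many distinct $\sigma$-translates of $P$, in a way not captured by pullback from any single $X_n$. The kernels of successive graded truncations of such a family then yield a strictly ascending chain of right ideals in $S \otimes_\kk C$, so $S \otimes_\kk C$ violates the ascending chain condition. Equivalently, a ``diagonal'' section $\operatorname{Spec} C \to \Xt$ that does not factor through any finite stage $X_n$ detects the infinite nature of $\Xt$ at the level of rings.

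For the non-existence of a finite-type fine moduli $Y$ for point objects in $\rQgr S$, I would argue by contradiction. A fine moduli would carry an $S$-flat universal family of point modules, well-defined up to the shift/tails ambiguity; since point objects biject with closed points of $X$, the scheme $Y$ must agree with $X$ set-theoretically. But the candidate universal family over $X$ constructed from the data $(X, \Lsh, \sigma, P)$ fails to be flat precisely at the orbit points $\sigma^n P$, and its minimal flattening is exactly the infinite blow-up $\Xt$ of Theorem~\ref{ithm-fine}, which is manifestly not of finite type. The main obstacle is ruling out the possibility that some subtle automorphism of $\rQgr S$, or the shift ambiguity, collapses $\Xt$ down to a finite-type quotient: this requires isolating the exact equivalence relation on families that the fineness condition demands and verifying that it cannot absorb the nontriviality of the successive blow-up centers, each of which contributes a genuinely new stratum.
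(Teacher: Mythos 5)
This statement is not proved in the paper at all: it is quoted verbatim, with attribution, from Keeler--Rogalski--Stafford \cite{KRS} (their Theorem~1.1) as background, so there is no internal argument to compare yours against. Measured against the known proof, your sketch contains two genuine gaps beyond the (acceptable) compression of the noetherianness argument, whose outline---ampleness of $\{\sS_n\}$ plus comparison with a twisted homogeneous coordinate ring---is at least the right strategy, even though it condenses a large part of \cite{KRS, RS-0} into a paragraph.

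First, your mechanism for defeating the strong noetherian property cannot work as stated. A morphism $\Spec \kk[t]\to X$ has image whose closure is a point or an irreducible curve, and by critical density a curve in $X$ (which has dimension at least $2$) meets the orbit of $P$ in only finitely many points. Hence every $\kk[t]$-point of $\Xt$ factors through some finite stage $X_n$, the ``drifting'' family you want does not exist, and no ascending chain in $S\otimes_\kk\kk[t]$ is produced this way. The actual route is indirect: by Artin--Zhang \cite[Theorem~E4.3]{AZ2001}, a strongly noetherian algebra has an inverse system of truncated point schemes that stabilizes to a projective fine moduli scheme, whereas the truncated point schemes of $S$ are (on their relevant components) the blow-ups $X_n$ at $\sI_n$, which never stabilize because the orbit of $P$ is infinite; alternatively one uses the generic flatness criterion of \cite{ASZ1999}. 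Second, your argument against a finite-type fine moduli scheme for point \emph{objects} conflates embedded point modules with their images in $\rQgr S$, and your closing intuition---that the equivalence relation coming from $\rQgr S$ ``cannot absorb the nontriviality of the successive blow-up centers''---is in direct tension with Theorem~\ref{ithm-coarse} of the present paper, which shows that at the level of coarse moduli this equivalence collapses $\Xt$ all the way down to the finite-type variety $X$. The obstruction to a finite-type \emph{fine} moduli scheme is therefore not set-theoretic but concerns the existence of a universal flat family, and that is precisely the step your sketch acknowledges but does not supply.
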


By contrast, \cite{KRS} gives a simple classification (that fails in families), namely that point objects are in bijective correspondence with points of $X$:  to a point $x\in X$ we associate the $S$-module $\bigoplus H^0(X, \kk_x \otimes \sL_n)$.  
In the present paper,
we explain how these two facts about point objects of $\rQgr S$ naturally fit together.

Assume that $\sL$ is sufficiently ample (in the body of the paper we work with any $\sigma$-ample $\sL$ by considering shifts of point modules).  Let $F$ be the moduli functor of embedded  point modules over $S$.   
 Define an equivalence relation $\sim$ on $F(C)$ by saying that $M \sim N$ if (their images) are isomorphic in $\operatorname{Qgr-}S\otimes_\kk C$.  We obtain  a functor $G: \text{Affine schemes} \longrightarrow \text{Sets}$
by sheafifying (in the fpqc topology) the presheaf $G^{\operatorname{pre}}$ of sets defined by
$\operatorname{Spec} C \mapsto  F(C)/\sim$.

A scheme $Y$ is a {\em coarse moduli scheme} for point objects if it corepresents the functor $G$: that is, there is 
a natural transformation $G\rightarrow \operatorname{Hom}_{\kk}(-, Y)$ that is universal for natural transformations from $G$ to 
schemes.  

Our main result is:

\begin{theorem}\label{ithm-coarse}
The variety $X$ is a coarse moduli scheme for point objects in $\operatorname{Qgr-}S$.  
\end{theorem}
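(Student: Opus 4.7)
The plan is to first construct a natural transformation $\eta\colon G\to\Hom_\kk(-,X)$, verify that it is a bijection on $\kk$-points, and then establish universality by descending morphisms from $\Xt$ along the natural projection $\pi\colon \Xt\to X$. By Theorem~\ref{ithm-fine}, the functor $F$ of embedded point modules is represented by $\Xt$, and the projection $\pi\colon \Xt\to X_0=X$ coming from the inverse limit induces a natural transformation $F\to\Hom_\kk(-,X)$. Because $X$ is a scheme, the functor $\Hom_\kk(-,X)$ is already an fpqc sheaf, so to descend through to $G$ it suffices to check that two embedded point modules over $\Spec C$ that are isomorphic in $\rQgr (S\otimes_\kk C)$ produce the same $C$-point of $X$. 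Reducing to $\kk$-points via separatedness of $X$, this becomes precisely the Keeler--Rogalski--Stafford classification of point objects of $\rQgr S$ by the points of $X$ (\cite{KRS}). The same classification shows $\eta$ is a bijection on $\kk$-points.

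For universality, let $\tau\colon G\to\Hom_\kk(-,Y)$ be any natural transformation to a scheme $Y$. Composing with the canonical map $F\to G$ produces a morphism of fpqc-algebraic stacks $\varphi\colon \Xt\to Y$. I plan to show $\varphi = \psi\circ\pi$ for a unique morphism $\psi\colon X\to Y$ by checking that the two pullbacks of $\varphi$ under the projections $p_1,p_2\colon \Xt\times_X\Xt\to \Xt$ coincide. A $C$-point $(a,b)$ of $\Xt\times_X\Xt$ is a pair of $C$-families of embedded point modules with $\pi(a)=\pi(b)$; the key claim is that $a$ and $b$ are $\sim$-equivalent in $G(C)$, which then forces $\varphi(a)=\varphi(b)$ because $\varphi$ factors through $G$. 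Granted this, together with fpqc-local surjectivity of $\pi$ onto $X$ (which I would establish by producing KRS-type families of embedded point modules over affine charts of $X$), Yoneda produces the required unique scheme morphism $\psi\colon X\to Y$ factoring $\tau$ through $\eta$.

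The principal obstacle is the fiberwise identification underlying the previous paragraph: two $C$-families of embedded point modules sharing an image in $X(C)$ must be fpqc-locally $\rQgr$-isomorphic. Over a single $\kk$-point this is again the Keeler--Rogalski--Stafford result, since all embedded point modules above a fixed $x\in X$ determine a single point object. In families the situation is much more subtle, since the tower $X_n\to X_{n-1}$ records successive choices of compatible generators for a family whose underlying $X$-point is prescribed; one must verify that the discrepancy between two such compatible systems with a common underlying $X$-point is data invisible in $\rQgr$ (e.g.\ realized by modifications by bounded graded submodules after passage to an fpqc cover). Making this family-theoretic identification precise, using the explicit description of $\Xt$ as the inverse limit of the blowups of $X$ along the ideals $\sI_n$, will be the central technical step of the proof.
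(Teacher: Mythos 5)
Your overall architecture (construct $\eta\colon G\to\Hom_\kk(-,X)$ from the projection $\Xt\to X$, check it kills $\sim$, then factor arbitrary maps out of $\Xt$ through $X$) matches the paper's, and you correctly flag the family-level identification as the crux. But two of your steps do not work as stated. First, the well-definedness of $\eta$ cannot be reduced to $\kk$-points ``via separatedness of $X$'': two morphisms $\Spec C\to X$ can agree on every $\kk$-point of $\Spec C$ and still differ when $C$ is non-reduced (take $C=\kk[\epsilon]$ and two distinct tangent vectors), and precisely such $C$ matter here. The paper's substitute is a genuine family statement, Lemma~\ref{lem-local-torsion-cat}: over a noetherian \emph{local} $C$, an isomorphism of point modules in $\rQgr S_C$ forces an honest isomorphism in all large degrees (the proof needs noetherianity of $R_L$ from \cite[Theorem~5.1]{ASZ1999} plus Nakayama). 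Combined with the shift operator $\Psi$ and the relation $\Phi\Psi=\sigma\Phi$, this gives $\Phi f_{\sM}=\sigma^{-k}\Phi\Psi^k f_{\sM}=\sigma^{-k}\Phi\Psi^k f_{\sN}=\Phi f_{\sN}$ (Lemma~\ref{lem-local-Psi} and Proposition~\ref{prop-coarse}(1)). You need some version of this argument; pointwise KRS classification is not enough.

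Second, your universality step is set up as fpqc descent of $\varphi\colon\Xt\to Y$ along $\pi\colon\Xt\to X$, but $\pi$ is an (infinite) blowup: it is not flat, hence not an fpqc covering, and it admits no sections---even fpqc-locally---over neighborhoods of the exceptional points $z_m$ (a section of a blowup of a smooth surface point over a two-dimensional neighborhood would force the blowup to be an isomorphism). So the descent package ``agree on $\Xt\times_X\Xt$ $+$ local surjectivity $\Rightarrow$ descend'' is not available. What actually works, and what the paper does in Proposition~\ref{prop-coarse}(2), is: build the map on the open set $U=X\smallsetminus\bigcup P_n$ where $\pi$ is an isomorphism; over the finitely many remaining points use that the tower $Z_{n+i}\to Z_n$ stabilizes there to descend $\varphi$ to a \emph{finite-level} morphism $\theta\colon V_n\to A$; show $\theta$ contracts the exceptional fibers (again via the $\Psi$/$\sigma$ twist, which is where your ``fiberwise identification'' is really used); and then invoke the rigidity/contraction principle for a proper birational morphism onto a variety smooth at the relevant points to factor $\theta$ through $X$ scheme-theoretically. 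Your key claim that two $C$-families with equal image in $X(C)$ are locally $\sim$-equivalent is in fact provable over local $C$ by the same $\Psi^k$ argument, but even granting it, the descent mechanism you propose to exploit it does not apply to $\pi$.
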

This gives a geometric structure to the bijection discovered by Keeler-Rogalski-Stafford.  

\begin{corollary}\label{intro cor}
There is a fine moduli space $\Xt$ for  embedded $S$-point modules but only a coarse moduli scheme $X$ for point objects of $\operatorname{Qgr-}S$.
\end{corollary}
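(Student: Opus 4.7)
The plan is to observe that Corollary \ref{intro cor} is purely a packaging of the two preceding theorems together with the non-existence half of the Keeler-Rogalski-Stafford result quoted above, so the proof is essentially bookkeeping. First, I would invoke Theorem \ref{ithm-fine} verbatim for the first clause: it asserts that $\Xt$ is a fine moduli space (in fact, even a $\kk$-space) for embedded $S$-point modules. Next, I would invoke Theorem \ref{ithm-coarse} for the coarse-moduli half of the second clause: it says precisely that $X$ corepresents the fpqc-sheafification $G$ of the presheaf $G^{\operatorname{pre}}$ of isomorphism classes of embedded point modules modulo the equivalence relation induced by isomorphism in $\rQgr S \otimes_\kk C$.

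The only substantive thing left to justify is the word \emph{only}, i.e.\ that $X$ is not itself a fine moduli space for point objects of $\rQgr S$. Here I would argue by uniqueness: a fine moduli scheme is automatically a coarse moduli scheme, and coarse moduli schemes (corepresenting objects) are unique up to unique isomorphism, so if any fine moduli scheme for point objects existed it would have to be $X$ itself. But $X$ is projective and hence of finite type over $\kk$, so the Keeler-Rogalski-Stafford theorem cited in the excerpt—which rules out any fine moduli scheme \emph{of finite type} over $\kk$ parameterizing point objects of $\rQgr S$—forbids this. Thus $X$ is coarse but not fine, completing the proof.

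Since the corollary is an assembly of results stated (or cited) earlier, there is no genuine obstacle at this step. The real content sits inside Theorems \ref{ithm-fine} and \ref{ithm-coarse}: constructing the inverse limit $\Xt = \varprojlim X_n$ as a noetherian fpqc-algebraic stack representing the embedded-point-module functor, and then establishing that collapsing embedded point modules by isomorphism in $\rQgr S$ precisely sheafifies to the functor corepresented by $X$. The corollary itself merely juxtaposes these two pictures and records their sharp contrast—fine on the embedded side, coarse but not fine on the $\rQgr S$ side—using the finite-type obstruction of \cite{KRS}.
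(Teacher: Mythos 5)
Your proposal is correct and matches the paper's treatment: the corollary is stated without a separate proof precisely because it is the juxtaposition of Theorem \ref{ithm-fine}, Theorem \ref{ithm-coarse}, and the Keeler--Rogalski--Stafford non-existence result, exactly as you describe. Your justification of the word \emph{only} (a fine moduli scheme would corepresent the functor, hence coincide with the coarse space $X$, which is of finite type, contradicting \cite{KRS}) is the standard uniqueness argument the paper leaves implicit, and it is sound.
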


It may be helpful to compare the phenomenon described by Corollary \ref{intro cor} to a related, though quite 
different, commutative phenomenon.  Namely, let $Y$ be a smooth projective (commutative) surface.  Fix $n\geq 1$.  Let
$R= {\mathbb C}[Y]$ denote a homogeneous coordinate ring of $Y$ (associated to a sufficiently ample invertible sheaf on $Y$), and consider graded quotient modules $R \rightarrow M$ such that $\on{dim}\, M_\ell  = n$ for $\ell\gg 0$.  
 By a general theorem of Serre, the moduli space for such quotients is the 
{\em Hilbert scheme of $n$ points on $Y$}, denoted $\operatorname{Hilb}^n(Y)$.  This is a smooth projective variety of dimension
$2n$.  Alternatively, remembering only the corresponding objects $[M]$ of $\operatorname{Qgr-}R\simeq \operatorname{Qcoh}(Y)$,
and imposing the further $S$-equivalence relation
(see Example 4.3.6 of \cite{HL}), we get the moduli space 
$\operatorname{Sym}^n(Y)$ for semistable length $n$ sheaves on $Y$, which equals the $n$th symmetric product of $Y$.  The latter
moduli space is only a coarse moduli space for semistable sheaves.  One has the Hilbert-Chow morphism 
$\operatorname{Hilb}^n(Y)\rightarrow \operatorname{Sym}^n(Y)$ which is defined by taking a quotient
$R \rightarrow M$ to the equivalence class of $M$.  It is perhaps helpful to view the moduli spaces and map $\Xt\rightarrow X$ associated to the algebra $S$ in light of
the theorems stated above: that is, as a kind of ``noncommutative Hilbert-Chow morphism of one point'' for a na\"ive blow-up algebra 
$S(X, \sL, \sigma, P)$.  

In work in preparation, we  generalize the results in \cite{RZ2008} by proving a converse, of sorts, to Theorem~\ref{ithm-coarse}.  
Namely, suppose $R$ is a 
connected graded noetherian algebra generated in degree one;  that $R$ has a fine moduli space $\Xt$ for embedded point modules; that $R$ has a projective coarse moduli scheme $X$ for point objects of $\operatorname{Qgr-}R$; and that the spaces $\Xt$ and $X$ and the
morphism 
$\Xt\rightarrow X$ between them have geometric properties similar to those of the spaces we encounter in the theorems above.  
Then, we show, there exist an automorphism $\sigma$ of $X$, a zero-dimensional subscheme $P\subset X$ supported on points with critically dense orbits, 
an ample and $\sigma$-ample invertible sheaf ${\mathcal L}$ on $X$, and a homomorphism $\phi: R\rightarrow S(X,{\mathcal L}, \sigma, P)$ from $R$ to
the na\"ive blow-up associated to this data; furthermore, $\phi$ is surjective in large degree.  This construction gives a new tool for
analyzing the structure of rings that are noetherian but not strongly noetherian.  Details will appear in \cite{NS2}.  


{\bf Acknowledgements.}  
The authors are grateful to B. Conrad, J.T. Stafford, and M. Van den Bergh for helpful conversations;  to S. Kleiman for help with references; and to the referee for several helpful questions and comments.  

The first author was supported by NSF grant DMS-0757987.  The second author was supported by an NSF Postdoctoral Research
Fellowship, grant DMS-0802935.

\section{Background}\label{BACKGROUND}

 In this section, we give needed definitions and background.  We begin by discussing {\em   bimodule algebras}:  this is the correct way to think of the sheaves $\sS_n$ defined above.  Most of the material in this section was developed in \cite{VdB1996} and  \cite{AV}, and we refer the reader there for references.  Our presentation  follows that in \cite{KRS} and \cite{S-idealizer}.
 
\begin{convention}
Throughout the paper, by {\em variety} (over $\kk$) we mean an integral separated scheme of finite type over $\kk$.
\end{convention}
   
Throughout this section, let $\kk$ be an algebraically closed field and let $A$ denote an affine noetherian $\kk$-scheme, which we think of as a base scheme.

\begin{defn}\label{def-bimod}
Let $X$ be a  scheme of finite type over  $A$.  An {\em $\struct_X$-bimodule}  is  a quasicoherent $\struct_{X \times X}$-module $\sh{F}$, such that for every coherent submodule $\sh{F}' \subseteq \sh{F}$,  the projection maps $p_1, p_2: \Supp \sh{F}' \to X$ are both finite morphisms.    The left and right $\struct_X$-module structures associated to an $\struct_X$-bimodule $\sh{F}$ are defined respectively as $(p_1)_* \sh{F}$ and $ (p_2)_* \sh{F}$.  
We make the notational convention that when we refer to an $\struct_X$-bimodule simply as an $\struct_X$-module, we are using the left-handed structure (for example, when we refer to the global sections or higher cohomology
 of an $\struct_X$-bimodule).   All $\sO_X$-bimodules are assumed to be $\sO_A$-symmetric.

There is a tensor product operation on the category of bimodules that has the expected properties; see \cite[Section~2]{VdB1996}.
\end{defn}

All the bimodules that we consider will be constructed from bimodules of the following form:
\begin{defn}\label{def-LR-structure}
Let $X$ be a projective scheme over $A$ and let $\sigma, \tau \in \Aut_A(X)$. Let $(\sigma, \tau)$ denote the map
\begin{displaymath}
X  \to X \times_A X \;\;\; \text{defined by} \;\;\;
x  \mapsto (\sigma(x), \tau(x)).
\end{displaymath}
If  $\sh{F}$ is a quasicoherent sheaf on $X$, we define the $\struct_X$-bimodule ${}_{\sigma} \sh{F}_{\tau}$ to be 
${}_{\sigma} \sh{F}_{\tau} = (\sigma, \tau)_* \sh{F}.$
If $\sigma = 1$ is the identity, we will often omit it; thus we write $\sh{F}_{\tau}$ for ${}_1 \sh{F}_{\tau}$ and $\sh{F}$ for  the  $\struct_X$-bimodule ${}_1 \sh{F}_1 = \Delta_* \sh{F}$, where $\Delta: X \to X\times_A X$ is the diagonal.
\end{defn}

\begin{defn}\label{def-BMA}
Let $X$ be a projective scheme over $A$.  
An {\em $\struct_X$-bimodule algebra}, or simply a {\em bimodule algebra}, $\sh{B}$ is an algebra object in the category of bimodules.  That is, there are a unit map $1: \struct_X \to \sh{B}$ and a product map $\mu: \sh{B} \otimes \sh{B} \to \sh{B}$ that have the usual properties.
\end{defn}

We follow \cite{KRS} and define
\begin{defn}\label{def-gradedBMA}
Let $X$ be a projective scheme over $A$ and let $\sigma \in \Aut_A(X)$.  
A bimodule algebra $\sh{B}$ is a {\em graded $(\struct_X, \sigma)$-bimodule algebra} if:

(1) There are coherent sheaves $\sh{B}_n$ on $X$ such that
$\sh{B} = \bigoplus_{n \in \ZZ} {}_1(\sh{B}_n)_{\sigma^n};$

(2)  $\sh{B}_0 = \struct_X $;

(3) the multiplication map $\mu$ is given by $\struct_X$-module maps
$\sh{B}_n \otimes \sh{B}_m^{\sigma^n} \to \sh{B}_{n+m}$, satisfying the obvious associativity conditions.  
\end{defn}

\begin{defn}\label{def-module}
Let $X$ be a projective  scheme over $A$ and let $\sigma \in \Aut_A(X)$.  
Let $\sh{R} = \bigoplus_{n\in\ZZ} (\sh{R}_n)_{\sigma^n}$ be a graded $(\struct_X, \sigma)$-bimodule algebra.  A {\em right $\sh{R}$-module} $\sh{M}$ is a quasicoherent $\struct_X$-module $\sh{M}$ together with a right $\struct_X$-module map $\mu: \sh{M} \otimes \sh{R} \to \sh{M}$ satisfying the usual axioms.  We say that $\sh{M}$ is {\em graded} if there is a direct sum decomposition 
$\displaystyle \sh{M} = \bigoplus_{n \in \ZZ}  (\sh{M}_n)_{\sigma^n}$
 with multiplication giving a family of $\struct_X$-module maps $\sh{M}_n \otimes \sh{R}_m^{\sigma^n} \to \sh{M}_{n+m}$, obeying the appropriate axioms.
 
  We say that $\sh{M}$ is {\em coherent} if there are a coherent $\struct_X$-module $\sh{M}'$ and a surjective map $\sh{M}' \otimes \sh{R} \to \sh{M}$ of ungraded $\sR$-modules.  
We make similar definitions for left $\sh{R}$-modules.  The bimodule algebra $\sh{R}$ is {\em right (left) noetherian}
if every right (left) ideal of $\sh{R}$ is coherent.  A graded $(\struct_X, \sigma)$-bimodule algebra is right (left) noetherian if and only if every graded right (left) ideal is coherent.
\end{defn}

We recall here some standard notation for module categories over rings and bimodule algebras.  Let $C$ be a commutative ring and let $R$ be an $\NN$-graded $C$-algebra.  We define $\rGr R$ to be the category of $\ZZ$-graded right $R$-modules; morphisms in $\rGr R$ preserve degree.   Let $\rTors R$ be the full subcategory of modules that are direct limits of right bounded modules.  This is a Serre subcategory of $\rGr R$, so we may form the {\em quotient category}
\[ \rQgr R := \rGr R / \rTors R.\]
(We refer the reader to \cite{Gabriel1962} as a reference for the category theory used here.)  There is a canonical  quotient functor from $\rGr R$  to $ \rQgr R$. 

We make similar definitions on the left.   Further, throughout this paper, we adopt the convention that if Xyz is a category, then xyz is the full subcategory of noetherian objects.  Thus we have $\rgr R$ and $\rqgr R$, $R \lqgr$, etc.  If $X$ is  a scheme, we will denote the category of quasicoherent (respectively coherent) sheaves on $X$ by $\struct_X \lMod$ (respectively $\struct_X \lmod$).  

Given a module $M \in \rgr R$, we define $M[n] := \bigoplus_{i \in \ZZ} M[n]_i$, where
$M[n]_i = M_{n+i}.$

For a graded $(\struct_X, \sigma)$-bimodule algebra $\sh{R}$, we likewise define $\rGr \sh{R}$ and $\rgr \sh{R}$.  The full subcategory  $\rTors \sh{R}$ of $\rGr \sh{R}$ consists of direct limits of modules that are  coherent as $\struct_X$-modules, and we similarly define
$\rQgr \sh{R} := \rGr \sh{R}/\rTors \sh{R}.$
We define $\rqgr \sh{R}$ in the obvious way.

If $\sh{R}$ is an $\struct_X$-bimodule algebra, its global sections $H^0(X, \sh{R})$ inherit an $\sO_A$-algebra structure.  We call $H^0(X, \sR)$ the {\em section algebra} of $\sR$.  If $\sR = \bigoplus (\sR_n)_{\sigma^n}$ is a graded $(\sO_X, \sigma)$-bimodule algebra, then multiplication on $H^0 (X, \sh{R})$ is induced from  the maps
\[ \xymatrix{
H^0(X, \sh{R}_n) \otimes_A H^0(X, \sh{R}_m) \ar[r]^{1\otimes \sigma^n} 
&  H^0(X, \sh{R}_n) \otimes_A H^0(X, \sh{R}_m^{\sigma^n}) \ar[r]^<<<<<{\mu} 
& H^0(X, \sh{R}_{n+m}).
}\]

  If $\sh{M}$ is a graded right $\sh{R}$-module, then 
$H^0(X, \sh{M}) = \bigoplus_{n \in \ZZ} H^0(X, \sh{M}_n)$
 is a right $H^0(X, \sh{R})$-module in the obvious way; thus $H^0(X, \blank)$ is a functor from $\rGr \sh{R}$ to $\rGr H^0(X, \sh{R})$.  

 If  $R = H^0(X, \sh{R})$ and $M$ is  a graded right $R$-module, define 
$M \otimes_R \sh{R}$ to be the sheaf associated to the presheaf $V \mapsto M \otimes_R \sh{R}(V)$.   This is  a graded right $\sh{R}$-module, and the functor $\blank \otimes_R \sh{R}: \rGr R \to \rGr \sh{R}$ is a right adjoint to $H^0(X, \blank)$.

The following is a relative version of a standard definition.

\begin{defn}\label{def-ample}
Let $A$ be an affine $\kk$-scheme and let $q:  X \to A$ be a projective morphism.  Let $\sigma \in \Aut_A(X)$, and let  $\{\sh{R}_n\}_{n \in \NN}$ be a sequence of coherent sheaves  on $X$.  The sequence of bimodules $\{(\sh{R}_n)_{\sigma^n}\}_{n \in \NN}$ is {\em right ample} if for any coherent $\struct_X$-module $\sh{F}$, the following properties hold:
\begin{enumerate} 
\item  $\sh{F} \otimes \sh{R}_n$ is globally generated for $n \gg 0$ --- that is, the natural map $q^* q_*( \sF \otimes \sR_n )\to \sF \otimes \sR_n$ is surjective for $n \gg 0$;
\item $R^iq_*( \sh{F} \otimes \sh{R}_n )= 0$  for $n \gg 0$ and $i \geq 1$.  
\end{enumerate} 
\noindent The sequence $\{ (\sh{R}_n)_{\sigma^n}\}_{n \in \NN}$  is {\em left ample}  if for any coherent $\struct_X$-module $\sh{F}$, the following properties hold:
\begin{enumerate}
\item the natural map $q^* q_* ( \sR_n  \otimes \sF^{\sigma^n} )\to \sR_n  \otimes \sF^{\sigma^n}$ is surjective for $n \gg 0$; 
\item $R^i q_* (\sh{R}_n \otimes \sh{F}^{\sigma^n}) = 0$  for $n \gg 0$ and  $i \geq 1$.  
\end{enumerate}

If $A = \kk$, we say that an invertible  sheaf $\Lsh$ on $X$ is {\em $\sigma$-ample}
 if the $\struct_X$-bimodules 
\[\{(\Lsh_n)_{\sigma^n}\}_{n \in \NN} = \{ \Lsh_\sigma^{\otimes n}\}_{n \in \NN}\]
 form a right ample sequence.  By  \cite[Theorem~1.2]{Keeler2000}, this is true if and only if the $\struct_X$-bimodules $\{(\Lsh_n)_{\sigma^n}\}_{n \in \NN}$ form a left ample sequence.
\end{defn}

The following result is a special case of a result due to Van den Bergh \cite[Theorem~5.2]{VdB1996}, although we follow the presentation of  \cite[Theorem~2.12]{KRS}:

\begin{theorem}[Van den Bergh]\label{thm-VdBSerre}
Let $X$ be a projective $\kk$-scheme  and let $\sigma$ be an automorphism of $X$.  Let $\sh{R} = \bigoplus (\sh{R}_n)_{\sigma^n}$ be a right noetherian graded $(\struct_X, \sigma)$-bimodule algebra, such that the bimodules  $\{(\sh{R}_n)_{\sigma^n}\}$ form a right ample sequence.   Then $R = H^0(X, \sh{R})$ is also right noetherian, and the functors $H^0(X, \blank)$ and $\blank \otimes_R \sh{R}$ induce an equivalence of categories  $\rqgr \sh{R} \simeq \rqgr R.$ 
\end{theorem}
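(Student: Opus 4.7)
The plan is to adapt the standard Serre correspondence to the bimodule algebra setting, imitating the classical proof for ample invertible sheaves but using the bimodule right ampleness in place of ordinary ampleness and bimodule noetherianness in place of ordinary graded noetherianness. The two assertions (noetherianness of $R$ and the equivalence of qgr categories) are proved in turn, and both hinge on the same pair of cohomological consequences of right ampleness, namely global generation and vanishing of higher cohomology for $\sF\otimes \sR_n$ with $n\gg 0$.

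For noetherianness, let $J\subseteq R$ be a graded right ideal. First I form $\sJ\subseteq \sR$ as the right $\sR$-submodule generated by the image of $J\otimes_R \sR\to \sR$. Because $\sR$ is right noetherian, $\sJ$ is coherent, hence generated by a coherent $\sO_X$-submodule sitting inside $\bigoplus_{i\leq n_0}\sJ_i$ for some $n_0$. Right ampleness, applied to the kernel $\sK$ of the evaluation map $H^0(X,\sJ_i)\otimes_\kk \sO_X\twoheadrightarrow \sJ_i$ and combined with the higher cohomology vanishing $H^1(X,\sK\otimes \sR_{n-i}^{\sigma^i})=0$ for $n\gg 0$, forces surjectivity of the multiplication maps
\[H^0(X,\sJ_i)\otimes_\kk H^0(X,\sR_{n-i}^{\sigma^i})\longrightarrow H^0(X,\sJ_n)\]
for $n\gg 0$. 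Since $R_n = H^0(X,\sR_n)$ and $J_n\subseteq H^0(X,\sJ_n)$ with equality in large degree (another application of the cohomology vanishing to the short exact sequence $0\to \sJ_n\to \sR_n\to \sR_n/\sJ_n\to 0$), this shows that $J$ is generated in some bounded range of degrees, and each $J_i$ is finite-dimensional, so $J$ is finitely generated.

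For the equivalence, the adjoint pair $\bigl(-\otimes_R\sR,\; H^0(X,-)\bigr)$ between $\rGr R$ and $\rGr \sR$ carries torsion to torsion (direct limits of locally coherent modules on the sheaf side correspond to limits of right-bounded modules on the ring side), so it descends to $\rQgr$ and, by the noetherianness proved above, to $\rqgr$. It remains to check that the unit and counit become isomorphisms in the quotient categories. The counit $\epsilon_{\sM}:H^0(X,\sM)\otimes_R\sR\to \sM$ for coherent $\sM$ is handled directly: property (1) of right ampleness guarantees that in each sufficiently large degree $n$ the map is surjective on sections after tensoring by enough $\sR$, while a coherent kernel becomes torsion by a second application of ampleness to a finite presentation. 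For the unit $\eta_M:M\to H^0(X,M\otimes_R\sR)$ with $M$ finitely generated, one picks a presentation $R(-a)^b\to R^c\to M\to 0$, applies $-\otimes_R\sR$, takes $H^0$, and invokes property (2) of right ampleness to see that $H^0(X,-)$ is exact on the resulting complex in large degrees. Comparison with the original presentation shows that $\eta_M$ is an isomorphism in all sufficiently large degrees, hence in $\rqgr R$.

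The main obstacle is the counit side: one needs to control both surjectivity (which uses global generation of $\sF\otimes \sR_n$) and coherence plus eventual vanishing of the kernel (which requires bimodule noetherianness and a second application of right ampleness to a chosen presentation of the kernel). Once the counit is in hand, the unit follows by the standard adjunction formalism together with the higher cohomology vanishing provided by property (2). Both pieces ultimately reduce to the twin statements that, for any coherent $\sO_X$-module $\sF$, the sheaves $\sF\otimes \sR_n$ are globally generated and acyclic for $n\gg 0$, which is precisely the content of right ampleness.
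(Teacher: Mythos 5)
The paper offers no proof of this statement: it is quoted as a special case of Van den Bergh's theorem \cite[Theorem~5.2]{VdB1996}, presented as in \cite[Theorem~2.12]{KRS}, so there is no in-paper argument to compare yours against. Your sketch follows the same overall route as those sources: the Serre-type correspondence driven by the two consequences of right ampleness (global generation and $H^1$-vanishing of $\sF\otimes\sR_n$ for $n\gg 0$) together with right noetherianness of $\sR$ to bound generating degrees. So you are reconstructing the cited proof rather than finding a genuinely different one, and the outline is the right one.

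Two steps as written do not go through. First, you assume the evaluation map $H^0(X,\sJ_i)\otimes_\kk\sO_X\to\sJ_i$ is surjective for the low degrees $i\le n_0$; there is no reason for the degree-$i$ piece of $\sJ$ to be globally generated, since $\sJ_i$ contains images of $J_j\otimes\sR_{i-j}^{\sigma^j}$ with $i-j$ small and ampleness says nothing there. The correct object is the image of $J_i\otimes_\kk\sO_X\to\sR_i$, which is globally generated by construction. Relatedly, the sequence $0\to\sJ_n\to\sR_n\to\sR_n/\sJ_n\to 0$ does not yield $J_n=H^0(X,\sJ_n)$ for $n\gg0$: vanishing of $H^1(X,\sJ_n)$ only gives surjectivity of $R_n\to H^0(X,\sR_n/\sJ_n)$, and the equality you want is precisely the surjectivity of the multiplication maps, not an independent consequence. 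Second, and more substantively, right ampleness is a statement about a \emph{fixed} coherent sheaf $\sF$ as $n\to\infty$, whereas you repeatedly apply it to sheaves that vary with $n$ (the kernel of $H^0(X,\sJ_i)\otimes\sR_{n-i}^{\sigma^i}\to\sJ_i\otimes\sR_{n-i}^{\sigma^i}$ differs from $\sK\otimes\sR_{n-i}^{\sigma^i}$ by a Tor term; the sheaf $\sJ_n$ itself varies with $n$). Making these vanishing statements uniform in $n$ is the real content of the theorem: in \cite{AV}, \cite{AZ2001} and \cite{KRS} this is done by using right noetherianness of $\sR$ to present the relevant kernels as coherent $\sR$-modules generated in bounded degrees and then running a descending induction on cohomological degree. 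The same uniformity issue recurs on the counit/unit side of your equivalence argument. Until those steps are supplied, the argument does not close; with them, it is the standard proof.
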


{\em Castelnuovo-Mumford regularity} is a useful tool for measuring ampleness and studying ample sequences.   
 We will need to use  relative  Castelnuovo-Mumford regularity; we review the relevant background here.
In the next three results, let $X$ be a projective $\kk$-scheme, and let $A$ be a noetherian $\kk$-scheme.  
Let $X_A:= X \times A$ and let $p: X_A \to X$ and $q:  X_A \to A$ be the projection maps.

  Fix a very ample invertible sheaf $\sO_X(1)$ on $X$.
Let $\sO_{X_A}(1):= p^* \sO_X(1)$; note $\sO_{X_A}(1)$ is relatively ample for $q:  X_A \to A$.  
If  $\sF$ is a coherent sheaf on $X_A$ and $n \in \ZZ$, let  $\sF(n) := \sF \otimes_{X_A} \sO_{X_A}(1)^{\otimes n}$.  
We say $\sF$ is  {\em $m$-regular with respect to $\sO_{X_A}(1)$}, or just {\em $m$-regular}, if 
$R^iq_*\sF(m-i) = 0$
for all $i >0$.
Since $\sO_{X_A}(1)$ is relatively ample, $\sF$ is $m$-regular for some $m$.  
The  {\em regularity} of $\sF$ is the minimal $m$ for which $\sF $ is $m$-regular; we write it $\reg(\sF)$.  

Castelnuovo-Mumford regularity is usually defined only for $\kk$-schemes, so we will spend a bit of space on the technicalities of working over a more general base.  
First note that we have:
\begin{lemma}\label{lem-CM1}
Let $\sF$ be a coherent sheaf on $X$.  
Then 
$ \reg(\sF) = \reg(p^* \sF).$ \qed
\end{lemma}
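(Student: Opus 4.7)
The plan is to reduce the comparison of regularities to a cohomology computation via flat base change. Since $p^*\sF(n) = p^*\bigl(\sF(n)\bigr)$ (because $\sO_{X_A}(1) = p^*\sO_X(1)$), it suffices to show that for every coherent sheaf $\sG$ on $X$ and every $i \geq 0$, the sheaf $R^i q_* p^*\sG$ vanishes if and only if $H^i(X,\sG)$ does.

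To carry this out, I would consider the cartesian square
\[
\xymatrix{
X_A \ar[r]^{q} \ar[d]_{p} & A \ar[d]^{g} \\
X \ar[r]^<<<<<{\pi} & \Spec \kk
}
\]
where $\pi$ and $g$ are the structure morphisms. Since $\kk$ is a field, the morphism $g: A \to \Spec \kk$ is flat, so flat base change applies and yields a natural isomorphism $g^* R^i \pi_* \sG \cong R^i q_* p^* \sG$. Interpreting $R^i \pi_* \sG$ as the quasicoherent sheaf on $\Spec \kk$ associated to $H^i(X, \sG)$, this identifies $R^i q_* p^* \sG$ with $H^i(X,\sG) \otimes_{\kk} \sO_A$.

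Applying this with $\sG = \sF(m-i)$ gives $R^i q_*\bigl(p^*\sF(m-i)\bigr) \cong H^i\bigl(X, \sF(m-i)\bigr) \otimes_{\kk} \sO_A$. Since $\sO_A$ is a nonzero, faithfully flat $\kk$-module (as $A$ is a nonempty noetherian $\kk$-scheme and $\kk$ is a field), the right-hand side vanishes precisely when $H^i(X, \sF(m-i)) = 0$. Therefore, $p^*\sF$ is $m$-regular relative to $q$ exactly when $\sF$ is $m$-regular in the usual (absolute) sense, and taking the minimum over such $m$ gives $\reg(p^*\sF) = \reg(\sF)$.

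I do not anticipate any serious obstacle here: the only point requiring attention is the flatness of $g$, which is immediate from $\kk$ being a field, and the fact that $\sO_A$ is faithfully flat over $\kk$ (again automatic). The only mild subtlety is allowing the edge case where $A$ is empty, but under the standing convention that $A$ is a nonempty noetherian affine $\kk$-scheme (as used throughout this section), this does not arise.
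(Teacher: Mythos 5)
Your proof is correct. The one point to watch is exactly the one you flag: flat base change along $g:A\to \Spec\kk$ (legitimate here since $\kk$ is a field and $\pi:X\to\Spec\kk$ is separated and quasi-compact) gives $R^iq_*p^*\bigl(\sF(m-i)\bigr)\cong H^i(X,\sF(m-i))\otimes_\kk\sO_A$, and faithful flatness of $\sO_A$ over $\kk$ (for $A\neq\emptyset$) converts the vanishing of one side into the vanishing of the other; combined with $(p^*\sF)(n)=p^*(\sF(n))$ this is the whole lemma. The paper's (suppressed) argument runs along the other leg of the square: since $A$ is affine, $p:X_A\to X$ is an affine morphism, so $p_*$ is exact and faithful; the Leray/Grothendieck composition then gives $p_*R^iq_*\sG\cong R^i\pi_*p_*\sG=H^i(X,p_*\sG)$, and the projection formula identifies $H^i(X,(p_*p^*\sF)(n))$ with $H^i(X,\sF(n))\otimes_\kk\Gamma(A,\sO_A)$. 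The two routes are equivalent in substance --- both identify the relative cohomology of $p^*\sF$ with $H^i(X,\sF(n))$ tensored with a faithfully flat $\kk$-module --- but yours is the cleaner packaging and matches the base-change citations used elsewhere in the paper (e.g.\ in Corollary~\ref{cor-ptideal}), whereas the paper's version applies more generally to an arbitrary coherent sheaf on $X_A$, comparing its regularity with that of its pushforward to $X$. Either way the statement is as routine as the \qed\ suggests.
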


The fundamental result on Castelnuovo-Mumford regularity is due to  Mumford:
\begin{theorem}\label{thm-Mumford}
{\rm (\cite[Example~1.8.24]{Laz})}
 Let $\sF$ be an $m$-regular coherent sheaf on $X_A$. Then for every $n \geq 0$:
\begin{enumerate}
 \item $\sF$ is $(m+n)$-regular;
\item $\sF(m+n)$ is generated by its global sections: that is, the natural map 
$q^* q_* \sF(m+n)\to \sF(m+n)$
 is surjective; 
\item the natural map
$q_* \sF(m) \otimes_A q_* \sO_{X_A}(n) \to q_* \sF(m+n)$
is surjective.
\end{enumerate}
\end{theorem}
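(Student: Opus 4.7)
The plan is to follow the classical proof of Castelnuovo-Mumford regularity, adapted to the relative setting. Since $A$ is affine and noetherian, each $R^i q_* \sG$ is the quasi-coherent sheaf on $A$ associated to the $\Gamma(A, \sO_A)$-module $H^i(X_A, \sG)$. Thus all of the hypotheses and conclusions of the theorem translate into the vanishing or generation of global cohomology on $X_A$, and I will work with these modules throughout.

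Part (1) is the main content, and by induction on $n$ it suffices to show that $m$-regular implies $(m+1)$-regular. For this I would choose a hyperplane $H \subset X$ defined by some $s \in H^0(X, \sO_X(1))$ such that $H_A := p^{-1}(H) \subset X_A$ contains none of the associated points of $\sF$. Since $X_A$ is noetherian, $\sF$ has finitely many associated points, and their images in $X$ lie on finitely many proper closed subvarieties; since $\kk$ is infinite and $\sO_X(1)$ is very ample, a general $s$ avoids these. Multiplication by $s$ is then injective on $\sF$, giving a short exact sequence
\[
0 \to \sF(-1) \xrightarrow{\cdot s} \sF \to \sF|_{H_A} \to 0
\]
on $X_A$. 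The restriction $\sF|_{H_A}$ is $m$-regular on $H_A$ with respect to $\sO_{X_A}(1)|_{H_A}$. Induction on $\dim X$, together with the long exact sequence in cohomology, then yields the cohomological vanishing that defines $(m+1)$-regularity of $\sF$.

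Parts (2) and (3) follow from (1) by standard arguments. For (2), the vanishing of all higher cohomology of $\sF(m+n)$ (supplied by (1)) combined with induction on $\dim X$ and the hyperplane sequence reduces global generation of $\sF(m+n)$ on $X_A$ to global generation of $\sF|_{H_A}(m+n)$ on $H_A$, which holds by the inductive hypothesis on dimension. For (3), the multiplication map on pushforwards is adjoint to an evaluation map $q^*(q_* \sF(m) \otimes_A q_* \sO_{X_A}(n)) \to \sF(m+n)$, and surjectivity follows from (2) together with the corresponding (classical) statement of global generation and multiplicative surjectivity for $\sO_{X_A}(n)$, which is the pullback of the very ample $\sO_X(1)^{\otimes n}$.

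The primary obstacle is the relative hyperplane step in (1): one must pick $s$ so that multiplication by $s$ is injective on $\sF$ globally on $X_A$, not merely fiberwise over $A$. The finiteness of the associated points of $\sF$ on the noetherian scheme $X_A$ and the infinitude of $\kk$ resolve this; once this is in place, the inductive structure of the classical Mumford proof goes through verbatim.
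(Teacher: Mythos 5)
The paper offers no proof of this statement --- it is quoted verbatim from \cite[Example~1.8.24]{Laz} --- so your proposal can only be measured against the standard argument. Your treatment of part (1) is essentially that argument and is correct: after reducing to $A$ affine (worth saying explicitly, since the paper's $A$ is an arbitrary noetherian $\kk$-scheme, but the statement is local on $A$), a section $s\in H^0(X,\sO_X(1))$ not vanishing at the finitely many images in $X$ of the associated points of $\sF$ gives $0\to\sF(-1)\xrightarrow{\,s\,}\sF\to\sF|_{H_A}\to 0$, the long exact sequence for $R^{\bullet}q_*$ shows $\sF|_{H_A}$ is $m$-regular on $H_A$, and induction on dimension plus the same long exact sequence gives $(m+1)$-regularity of $\sF$. (Minor point: you should also ask that $H$ contain no generic point of $X$, so the dimension actually drops.)

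The genuine gaps are in (2) and (3), where you have inverted the logical order and neither derivation works as stated. For (3): surjectivity of $q_*\sF(m)\otimes_A q_*\sO_{X_A}(n)\to q_*\sF(m+n)$ does \emph{not} follow from global generation of $\sF(m+n)$ together with properties of $\sO_{X_A}(n)$. Global generation gives a surjection of \emph{sheaves} $q^*q_*\sF(m)\otimes\sO_{X_A}(n)\to\sF(m+n)$, and pushing it forward preserves surjectivity only if $R^1q_*$ of its kernel vanishes --- which is precisely what is unproved. The correct proof of (3) is another hyperplane induction: $R^1q_*\sF(k-1)=0$ makes $q_*\sF(k)\to q_*\bigl(\sF|_{H_A}(k)\bigr)$ surjective, the inductive hypothesis gives multiplicative surjectivity on $H_A$, and the kernel of restriction is $s\cdot q_*\sF(k-1)$, which already lies in the image of the multiplication map; a diagram chase finishes. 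For (2): the hyperplane sequence only yields generation of $\sF(m+n)$ at points of $H_A$, and for a given point $x$ you cannot in general find a hyperplane through $p(x)$ avoiding the associated points of $\sF$ (for instance if $\sF$ has an associated point lying over a closed point of $X$), so this induction cannot reach all points. The standard route is the reverse of yours: iterate (3) to get $q_*\sF(m)\otimes_A q_*\sO_{X_A}(k-m)\twoheadrightarrow q_*\sF(k)$, invoke relative Serre vanishing so that $\sF(k)$ is globally generated for $k\gg 0$, deduce that $q^*q_*\sF(m)\otimes\sO_{X_A}(k-m)\to\sF(k)$ is surjective, and untwist to conclude that $q^*q_*\sF(m)\to\sF(m)$ is surjective. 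With (2) and (3) rederived in this order, the proof is complete.
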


We also have
\begin{lemma}\label{SL1}
For any $0$-regular invertible sheaf $\sH$ on $X_A$ and any $A$-point $y$ of $X$, the natural map
\[ q_* \sH \stackrel{\alpha}{\to} q_* (\sO_y \otimes_{X_A} \sH)\]
is surjective.
\end{lemma}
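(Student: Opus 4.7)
The idea is to invoke the global generation part of Mumford's theorem and pull back along the section defined by $y$.

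First, interpret the $A$-point $y \colon A \to X$ as a section $s := (y, \id_A) \colon A \to X_A$ of the projection $q \colon X_A \to A$. Because $X$ is separated over $\kk$, $s$ is a closed immersion whose image is the closed subscheme $y \subset X_A$, and $\sO_y = s_*\sO_A$. The projection formula then yields $\sO_y \otimes_{X_A} \sH = s_*(s^*\sH)$, and since $q \circ s = \id_A$ we obtain
\[ q_*(\sO_y \otimes_{X_A} \sH) \;=\; q_* s_* s^* \sH \;=\; s^*\sH. \]

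Second, apply Theorem~\ref{thm-Mumford}(2) with $m=n=0$: since $\sH$ is $0$-regular, the counit $\epsilon_{\sH} \colon q^* q_* \sH \to \sH$ of the $(q^*, q_*)$-adjunction is surjective. Applying $s^*$ (which is right exact) and using $s^* q^* = (qs)^* = \id$ produces a surjection
\[ q_*\sH \;=\; s^* q^* q_* \sH \;\xrightarrow{\;s^*\epsilon_{\sH}\;}\; s^* \sH. \]

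Third, I need to identify this surjection with the natural map $\alpha$. By construction $\alpha$ equals $q_*$ of the unit $\eta_{\sH} \colon \sH \to s_* s^* \sH$ of the $(s^*, s_*)$-adjunction, while the map just produced is $s^*$ of the counit $\epsilon_{\sH}$ of the $(q^*, q_*)$-adjunction; under the canonical identifications $s^* q^* = \id$ and $q_* s_* = \id$ (both coming from $qs = \id_A$), the triangle identities force these two natural transformations $q_* \Rightarrow s^*$ to agree, each expressing ``restrict a section of $\sH$ to the section $s$.'' A quick sanity check on an affine open $\Spec R \subseteq A$, where $q_*\sH$ consists of honest sections of $\sH$ over $X \times \Spec R$ and both maps send a section to its pullback along $s$, confirms this. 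The main (and only) obstacle is this bookkeeping; everything else is immediate from Mumford's theorem and right exactness of $s^*$.
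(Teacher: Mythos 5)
Your proof is correct, but it takes a genuinely different route from the paper's. You use part (2) of Theorem~\ref{thm-Mumford} (global generation of $\sH$ itself, i.e.\ surjectivity of $q^*q_*\sH\to\sH$) and then restrict along the closed immersion $s=(y,\id_A)$, exploiting right exactness of $s^*$ together with $s^*q^*=\id$ and $q_*s_*=\id$; the only bookkeeping is the identification of $s^*\epsilon_{\sH}$ with $q_*$ of the unit $\sH\to s_*s^*\sH$, which you verify on affine opens and which is indeed the standard ``evaluate a section at the section $s$'' map. The paper instead reduces by flat base change to $A=\Spec C$ with $C$ local, identifies $q_*(\sO_y\otimes_{X_A}\sH)$ with $C$, sets $I=\im(\alpha)$, and shows $I=C$ by twisting up: part (3) of Theorem~\ref{thm-Mumford} gives surjectivity of the multiplication map $q_*\sH\otimes_C q_*\sO_{X_A}(n)\to q_*\sH(n)$, and relative ampleness of $\sO_{X_A}(1)$ makes the evaluation map surjective for $n\gg 0$, forcing $I=C$. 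Your argument is shorter, avoids the reduction to a local base and the limiting argument in $n$, and uses only the $n=0$ instance of global generation; the paper's argument is more hands-on but needs the extra ampleness input. Both are complete proofs resting on Theorem~\ref{thm-Mumford}.
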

\begin{proof}
 This is standard, but we check the details.  
Since cohomology commutes with flat base change, it suffices to consider the case that $A = \Spec C$ where $C$ is a local ring.
Then for any $n \in \ZZ$, we may consider $\sO_y \otimes_{X_A} \sH(n)$ as an invertible sheaf on $A$.  
Since $C$ is local, as a $C$-module this is isomorphic to $C$.

We thus have $q_* (\sO_y \otimes_{X_A} \sH) \cong C$.  
Let $I:= \im(\alpha)$; this is an ideal of $C$.

Let $n \geq 0$ and consider the natural maps
\beq\label{square}
\xymatrix{
 q_*  \sH \otimes_C q_* \sO_{X_A}(n) \ar[rr]^{\mu} \ar@{..>}[d] \ar[rd]_{\alpha \otimes 1} \ar[rrd]^{f} &&  q_*  \sH(n) \ar[d] \\
I \otimes_C q_* \sO_{X_A}(n) \ar[r]& q_*(\sO_y \otimes_{X_A} \sH) \otimes_C q_* \sO_{X_A}(n) \ar[r]& q_* (\sO_y \otimes_{X_A} \sH(n)) \cong C.
} \eeq
This diagram clearly commutes, and by construction  $\alpha \otimes 1$ factors through $I \otimes_C q_* \sO_{X_A}(n)$.  
Thus $\im f \subseteq I$ for all $n$.

On the other hand, by Theorem~\ref{thm-Mumford}(3), $\mu$ is surjective.   
As $\sO_{X_A}(1)$ is relatively ample, for $n \gg 0$ the right hand vertical map is surjective.
Thus $f$ is surjective for $n \gg 0$, and so $I = C$.
\end{proof}

Let $Z$ be a closed subscheme of $X_A$. 
 We say that $Z$ has {\em relative dimension} $\leq d$ 
if for all $x \in A$, the fiber $q^{-1}(x)$ has dimension $\leq d$ as a $\kk(x)$-scheme.  

The following is a relative version of a result of Keeler.
\begin{proposition}\label{prop-Dennis}
 {\rm (cf. \cite[Proposition~2.7]{Keeler2006})}
Let $X$ be a projective $\kk$-scheme.  There is a constant $D$, depending only on $X$ and on $\sO_X(1)$, so that the following holds:
for any  noetherian $\kk$-scheme $A$,
and for any coherent sheaves $\sF, \sG$ on $X_A$ so that the closed subscheme   of $X_A$ where $\sF$ and $\sG$ both fail to be locally free has relative dimension $\leq 2$, we have
\[ \reg(\sF\otimes_{X_A} \sG) \leq \reg(\sF) + \reg(\sG) + D.\]
\end{proposition}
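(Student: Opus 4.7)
The plan is to adapt the argument of Keeler \cite[Proposition~2.7]{Keeler2006} to the relative setting, replacing absolute cohomology throughout with the relative higher direct images $R^i q_*$ and reinterpreting the dimension bound on the bad locus as a bound on the relative dimension over $A$.

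The first step is to construct a finite complex $\sE^{\bullet} \to \sF$ of locally free sheaves on $X_A$, where each $\sE^i$ is a finite direct sum of twists $\sO_{X_A}(-k)$ with $k$ controlled in terms of $\reg\sF$; this is possible because $\sO_{X_A}(1)$ is relatively ample.  The cohomology sheaves of $\sE^{\bullet}\otimes\sG$ are then the Tor-sheaves $\shTor_j(\sF,\sG)$.  For $j\geq 1$ these are supported on the locus where both $\sF$ and $\sG$ fail to be locally free; by hypothesis this locus has relative dimension at most $2$ over $A$, so fiberwise cohomological dimension forces $R^p q_*\shTor_j(\sF,\sG)(n) = 0$ for $p\geq 3$ and all $n$.

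The second step is to run the hypercohomology spectral sequence
\[
E_2^{p,q} \;=\; R^p q_*\!\bigl(\shTor_{-q}(\sF,\sG)(n)\bigr) \;\Longrightarrow\; R^{p+q} q_*\!\bigl((\sF\otimes^L\sG)(n)\bigr),
\]
whose abutment can also be computed directly from the locally free resolution $\sE^{\bullet}\otimes\sG$, whose terms are sums of twists of $\sG$ with regularities equal to $\reg\sG$ plus a twist bounded in terms of $\reg\sF$.  Combining the two computations of $R^{\bullet}q_*((\sF\otimes^L\sG)(n))$ and solving for $R^i q_*(\sF\otimes\sG)(n) = R^i q_*\shTor_0(\sF,\sG)(n)$ then yields the sought regularity bound for $\sF\otimes\sG$.

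The main obstacle is controlling the regularities of the intermediate $\shTor_j(\sF,\sG)$ with $j\geq 1$ uniformly in $A$, $\sF$, and $\sG$.  Because these sheaves are supported in relative dimension $\leq 2$, only $R^p q_*$ with $p\leq 2$ can contribute, and the remaining estimates reduce to standard regularity bounds for sheaves supported in low relative dimension on $X_A$.  These bounds depend only on $\dim X$ and on the regularity-theoretic invariants of $\sO_X(1)$, both preserved under base change to $A$, so the resulting constant $D$ is intrinsic to $X$ and uniform in the base.
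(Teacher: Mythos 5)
Your outline follows the same route as the paper, which simply notes that the needed standard results (the relative Mumford theorem and Lemma \ref{SL1}) hold over an affine base and then repeats Keeler's proof of \cite[Proposition~2.7]{Keeler2006}; your resolution, the vanishing of $R^pq_*$ for $p\geq 3$ on the higher Tor sheaves, and the spectral-sequence comparison are a correct expansion of that argument.

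Your final paragraph, however, misidentifies where the remaining work lies, and the tool you invoke there does not exist. No bound on $\reg\shTor_j(\sF,\sG)$ for $j\geq 1$ is needed, and none is available: a coherent sheaf supported in relative dimension $\leq 2$ can have arbitrarily large regularity (e.g.\ the structure sheaf of a high-degree curve), so there are no ``standard regularity bounds for sheaves supported in low relative dimension'' that are uniform in the sheaf. Fortunately the spectral sequence never asks for them. At the spot $E_2^{i,0}=R^iq_*\bigl((\sF\otimes_{X_A}\sG)(n)\bigr)$ with $i\geq 1$, every incoming differential vanishes because $\shTor_j=0$ for $j<0$, and every outgoing $d_r$ lands in $R^{i+r}q_*\bigl(\shTor_{r-1}(\sF,\sG)(n)\bigr)$ with $i+r\geq 3$ and $r-1\geq 1$, which is zero by the relative-dimension hypothesis alone. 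Hence $E_2^{i,0}=E_\infty^{i,0}$ is a subquotient of the hyperderived functor $\mathbb{R}^iq_*\bigl((\sF\otimes^{\mathrm{L}}\sG)(n)\bigr)$, which you kill using the resolution (only the terms $\sE^{-j}$ with $j\leq \dim X$ matter, since $R^pq_*=0$ for $p>\dim X$). The genuine uniformity issue sits instead in your first step, where your appeal to relative ampleness is too quick: the twists appearing in $\sE^{\bullet}$ must be bounded by $\reg\sF$ plus constants depending only on $X$ and $\sO_X(1)$, not on $A$ or $\sF$; this is exactly what the relative form of Theorem \ref{thm-Mumford} supplies, and it is the source of the constant $D$.
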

\begin{proof}
The statement is local on the base, so we may assume without loss of generality that $A = \Spec C$ is affine.  
Since standard results such as Theorem~\ref{thm-Mumford} and Lemma~\ref{SL1} hold in this relative context, we may repeat the proof of \cite[Proposition~2.7]{Keeler2006}.  
The relative dimension assumption ensures the   vanishing of $Rq_*$ that is needed in the proof.
\end{proof}

To end the introduction, we define {\em na\"ive blowups}:  these are the algebras and bimodule algebras that we will work with throughout the paper.  Let  $X$ be a projective $\kk$-variety.  Let $\sigma\in \Aut_{\kk}(X)$ and let $\sL$ be a $\sigma$-ample invertible sheaf on $X$.  Let $P$ be a 0-dimensional subscheme of $X$.  We define ideal sheaves
\[\sI_n := \sI_P  \sI_P^{\sigma} \cdots \sI_P^{\sigma^{n-1}}\]
for $n \geq 0$.
Then we define a bimodule algebra
$\sS(X, \sL, \sigma, P) := \bigoplus_{n \geq 0} (\sS_n)_{\sigma^n},$
where
$\sS_n := \sI_n \sL_n.$
Define
$S(X, \sL, \sigma, P):= H^0(X, \sS(X, \sL, \sigma, P)).$

We recall the main results of \cite{RS-0}:
\begin{theorem}[\cite{RS-0}, Theorems 1.2 and 3.1]\label{thm-RS}
Let $X$ be a projective $\kk$-variety with $\dim X \geq 2$.  Let $\sigma \in \Aut_{\kk}(X)$ and let $\sL$ be a $\sigma$-ample invertible sheaf on $X$.  Let $P$ be a 0-dimensional subscheme of $X$.  Let $\sS:= \sS(X, \sL, \sigma, P)$ and let $S:= S(X, \sL, \sigma, P)$.  

If all points in $P$ have critically dense $\sigma$-orbits, then:
\begin{enumerate}
\item The sequence of bimodules $\{ (\sS_n)_{\sigma^n} \}$ is a left and right ample sequence.
\item $S$ and $\sS$ are left and right noetherian, and the categories $\rqgr S$ and $\rqgr \sS$ are equivalent via the global sections functor.  Likewise, $S \lqgr$ and $\sS \lqgr$ are equivalent.
\item The isomorphism classes of simple objects in $\rqgr S \simeq \rqgr \sS$ are in 1-1 correspondence with the closed points of $X$, where $x \in X$ corresponds to the $\sS$-module $\bigoplus \kk_x \otimes \sL_n$.  However, the simple objects in $\rqgr S$ are not parameterized by any scheme of finite type over $\kk$.
\end{enumerate}
\end{theorem}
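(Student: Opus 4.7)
The plan is to prove (1) first by a Castelnuovo--Mumford regularity argument combining Proposition~\ref{prop-Dennis} with critical density; (2) then follows from a direct noetherianness argument on $\sS$ together with Theorem~\ref{thm-VdBSerre}; and (3) is a structural analysis of simples using ampleness and critical density, supplemented by a failure-of-representability argument for the non-parameterization statement.

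For (1), I would aim to bound $\reg(\sF \otimes \sI_n \otimes \sL_n)$ uniformly in $n$ (up to a constant depending on $\sF$), for any coherent $\sF$. Since $\dim X \geq 2$ and $\sI_n$ fails to be locally free only on the 0-dimensional set $P \cup \sigma^{-1}(P) \cup \cdots \cup \sigma^{-(n-1)}(P)$, Proposition~\ref{prop-Dennis} applies and gives $\reg(\sF \otimes \sS_n) \leq \reg(\sF) + \reg(\sI_n \otimes \sL_n) + D$. The remaining task is to bound $\reg(\sI_n \otimes \sL_n)$: from the short exact sequence $0 \to \sI_n \otimes \sL_n \to \sL_n \to (\sO_X/\sI_n) \otimes \sL_n \to 0$ together with the $\sigma$-ampleness of $\sL$, this reduces to controlling the regularity of finite-length sheaves supported on the $\sigma$-orbit points. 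Although $\on{len}(\sO_X/\sI_n)$ grows linearly in $n$, critical density forces the support to spread across $X$ rather than accumulate on any proper closed subset; a careful cohomological calculation then yields the required uniform bound, giving both global generation and higher cohomology vanishing, hence right ampleness. The left-ample statement follows by the equivalence of left and right $\sigma$-ampleness cited after Definition~\ref{def-ample}.

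For (2), to prove $\sS$ is right noetherian, I would take an ascending chain of graded right ideals $\sJ^{(1)} \subseteq \sJ^{(2)} \subseteq \cdots$: in each degree, the $\sJ^{(k)}_n$ are $\sO_X$-submodules of $\sS_n$, and passing to global sections in large degree via the ampleness of (1) reduces stabilization to a condition controlled by critical density (since the support of any nontrivial subquotient quotient sheaf that is not 0-dimensional must contain infinitely many $\sigma^{-i}(P)$, forcing equality in high degree). Theorem~\ref{thm-VdBSerre} then yields noetherianness of $S$ and the equivalence $\rqgr \sS \simeq \rqgr S$; the left-handed versions are symmetric. For the simples classification in (3), given a simple object $\overline{\sM} \in \rqgr \sS$ represented by a noetherian $\sS$-module $\sM$, I examine the $\sO_X$-supports of the $\sM_n$: critical density again prevents these supports from being a positive-dimensional proper closed subset, so the only options are a single closed point $x$ (yielding the module $\bigoplus \kk_x \otimes \sL_n$) or all of $X$, and an ampleness argument rules out the latter for a simple. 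The failure to have a fine moduli scheme of finite type is deeper: one uses the failure of strong noetherianness of $S \otimes_\kk C$ for suitable noetherian $C$ to exhibit $C$-flat families of point modules whose fiberwise classes in $\rqgr$ jump in a way that no morphism to a finite-type scheme can absorb, contradicting what a fine moduli scheme would require.

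The hardest step is the uniform regularity bound underpinning (1): a naive estimate on $\reg(\sO_X/\sI_n)$ grows with $n$, and the entire argument pivots on a quantitative use of critical density to show that this growth is outpaced by the $\sigma$-ampleness of $\sL$. Once (1) is secured, (2) and (3) are reasonably routine consequences of Theorem~\ref{thm-VdBSerre}, bimodule-algebra noetherianness, and the qualitative consequences of critical density, with only the no-fine-moduli part of (3) requiring a genuinely separate and delicate construction.
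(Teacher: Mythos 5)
First, a structural point: Theorem~\ref{thm-RS} is quoted from \cite{RS-0} as background and is nowhere proved in this paper, so there is no internal argument to measure your proposal against; the comparison has to be with the source. Against that, your sketch has the right architecture --- regularity estimates giving ampleness, a direct noetherianness argument for $\sS$ feeding into Van den Bergh's Theorem~\ref{thm-VdBSerre} to get noetherianness of $S$ and the category equivalence, and critical density driving the classification of simples --- but the two places you would most need to do work are exactly the places left as assertions, and one of them is set up in a way that would fail.

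The first gap is the uniform bound on $\reg(\sI_n\otimes\sL_n)$ in (1), which is the technical heart of \cite[\S3]{RS-0} and not a ``careful cohomological calculation'' one can wave at. From $0\to\sI_n\sL_n\to\sL_n\to(\sO_X/\sI_n)\otimes\sL_n\to 0$ you need $H^0(X,\sL_n(j))\to H^0(X,(\sO_X/\sI_n)\otimes\sL_n(j))$ surjective for $j$ in a range growing with $n$, i.e.\ that $\sL_n(j)$ separates the jets of a length-$O(n)$ scheme; a per-point bookkeeping via the tensor-regularity bound of Proposition~\ref{prop-Dennis} gives only $\reg(\sI_n)\leq Cn$ for a constant $C$ that the (in general merely linear) decay of $\reg(\sL_n)$ need not beat, and ``critical density spreads the points out'' is not by itself a mechanism that closes this gap --- one needs the genuinely more delicate inductive estimates culminating in \cite[Corollary~3.14]{RS-0}. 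The second gap is a reversal of logic in the last claim of (3): in \cite{KRS} one does not start from the failure of the strong noetherian property and manufacture jumping flat families. Rather, one shows directly, using critical density, that the inverse system of truncated point schemes never stabilizes (the length-$n$ point scheme contains the blowup $X_n$ of $X$ at $\sI_n$, and these change over every nonempty open subset as $n$ grows), which rules out a finite-type parameter scheme; the failure of strong noetherianness is then \emph{deduced} from this via the contrapositive of \cite[Theorem~E4.3]{AZ2001}. So the non-parameterizability statement is an input to, not an output of, the non-strong-noetherianness, and your proposed derivation cannot be carried out without essentially redoing that argument in the correct order.
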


For technical reasons, we will want to assume that our na\"ive blowup algebras $S$ is  generated in degree one.  
By \cite[Propositions~3.18, 3.19]{RS-0}, this will always be true if we either replace $S$ by a sufficiently large Veronese or replace $\sL$ by a sufficiently ample line bundle (for example, if $\sL$ is ample, by a sufficiently high tensor power of $\sL$).  
If $S$ is generated in degree one, then by \cite[Corollary~4.11]{RS-0} the simple objects in $\rqgr S$ are the images of shifts of point modules.

\section{Blowing up arbitrary 0-dimensional schemes}\label{GEOM-BLOWUPS}
For the rest of the paper, let $\kk$ be an uncountable algebraically closed field.  
Let $X$ be a projective variety over $\kk$, let $\sigma \in \Aut_{\kk}(X)$, and let $\sL$ be a $\sigma$-ample invertible sheaf on $X$.  Let $P$ be a 0-dimensional subscheme of $X$, supported at points with dense  (later, critically dense) orbits.  Let $\sS:=\sS(X, \sL, \sigma, P)$ and let $S:= S(X, \sL, \sigma, P)$.  
In this paper, we compare three objects:  the scheme parameterizing length $n$ truncated point modules over $S$, the scheme parameterizing length $n$ truncated point modules over $\sS$, and the blowup of $X$ at the ideal sheaf $\sI_n = \sI_P \cdots \sI_P^{\sigma^{n-1}}$.  In this section, we focus on the blowup of $X$.  
We first give some general lemmas on blowing up the defining ideals of 0-dimensional schemes.  These are elementary, but we give proofs for completeness.

Suppose that $X$ is a variety and that $f:  Y \to X$ is a surjective, projective morphism of schemes.  Let $\eta$ be the generic point of $X$.  We define
\[ Y^o := \bbar{f^{-1}(\eta)},\]
and refer to $Y^o$, by abuse of terminology, as the {\em relevant component} of $Y$.  In our situation, $f$ will always be generically one-to-one and  $Y^o$ will be irreducible, with $f|_{Y^o}$ birational onto its image.

\begin{lemma}\label{lem-ideals}
Let $A$ be a variety of dimension $\geq 2$.  
Let $\sh{I}$ be the ideal sheaf of a 0-dimensional subscheme of $A$, and let $\pi: X \to A$ be the blowup of $A$ at $\sh{I}$.  
Let  $W$ be the scheme parameterizing colength  1 ideals inside $\sh{I}$.  Let $\phi:  W \to A$ be the canonical morphism that sends an ideal $\sh{J}$ to the support of $\sh{I}/\sh{J}$.  Then there is a closed immersion $c:  X \to W$  that gives an  isomorphism between $X$ and $W^o$.  Further, the diagram 
\[ \xymatrix{
X \ar[rr]^{c} \ar[rd]_{\pi} && W \ar[ld]^{\phi} \\
& A
}
\]
commutes.   
\end{lemma}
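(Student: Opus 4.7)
The plan is to identify the moduli scheme $W$ with the relative projectivization $\Proj_A\Sym^\bullet_{\sO_A}\sI$, and then to realize $c$ as the closed immersion induced by the natural surjection from this symmetric algebra onto the Rees algebra whose Proj defines the blowup $X$.

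First, I would argue that $W \cong \Proj_A \Sym^\bullet \sI$. A $T$-family of colength-$1$ ideals in $\sI$ is a morphism $f:T\to A$ together with a sub-ideal $\sJ_T\subset f^*\sI$ whose quotient is invertible on $T$; equivalently, it is a surjection $f^*\sI\twoheadrightarrow\sL$ with $\sL$ invertible on $T$, which is precisely the functor represented by $\Proj_A\Sym^\bullet\sI$ in Grothendieck's convention. Under this identification, $\phi$ is the structure map of this relative Proj.

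Next, using that $X=\Proj_A\bigoplus_{n\geq 0}\sI^n$ by the standard description of the blowup, I would observe that the canonical surjection of graded $\sO_A$-algebras
\[\Sym^\bullet_{\sO_A}\sI \;\twoheadrightarrow\; \bigoplus_{n\geq 0}\sI^n\]
coming from multiplication in $\sO_A$ yields, upon applying $\Proj_A(-)$, a closed immersion $c:X\hookrightarrow W$. Commutativity of the stated triangle is automatic, since both $\pi$ and $\phi\circ c$ are the structure morphisms to $A$ of their respective relative Projs.

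Finally, to show that $c$ maps $X$ isomorphically onto $W^o$, I would use that the displayed surjection of graded algebras is an isomorphism over the open $U:=A\setminus\Supp(\sO_A/\sI)$ where $\sI$ is invertible, so that $c$ restricts to an isomorphism $\pi^{-1}(U)\xrightarrow{\sim}\phi^{-1}(U)=U$. Since $X$ is irreducible (the blowup of an irreducible variety at a $0$-dimensional subscheme) and $c$ is a closed immersion, $c(X)$ is an irreducible closed subscheme of $W$; its generic point $c(\eta_X)$ satisfies $\phi(c(\eta_X))=\eta$, the generic point of $A$, and because $\sI_\eta=\sO_{A,\eta}$ the fiber $\phi^{-1}(\eta)$ reduces to a single point. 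Taking Zariski closures in $W$ then gives $c(X)=\overline{\{c(\eta_X)\}}=\overline{\phi^{-1}(\eta)}=W^o$.

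The main technical point is the identification $W=\Proj_A\Sym^\bullet\sI$: it requires verifying that this Proj represents the functor of colength-$1$ ideals in $\sI$ despite $\sI$ not being locally free. Once that is in hand, the remaining arguments are routine manipulations with graded algebras and $\Proj$.
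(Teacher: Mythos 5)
Your proposal is correct and follows essentially the same route as the paper: identify $W$ with $\Proj_A\Sym^\bullet\sI$ (the paper cites Kleiman's Proposition~2.2 for exactly the representability point you flag as the main technical issue), obtain $c$ from the canonical surjection of the symmetric algebra onto the Rees algebra, and conclude via the fact that both $\pi$ and $\phi$ are isomorphisms away from $\Cosupp\sI$. No gaps.
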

\begin{proof}
Without loss of generality $A = \Spec C$ is affine; let $I := \sI(A)$.  We may identify  $W$ with $ \Proj \Sym_C(I)$ (see Proposition 2.2 of \cite{Kleiman90}); under this identification  $\phi:  W \to A$ is induced by the inclusion $C \hra \Sym_C(I)$.  There is a canonical surjective map of graded $C$-algebras $ \Sym_C(I) \to C \oplus \bigoplus_{n \geq 1} I^n$, which is the identity on $C$.  This induces a closed immersion $c:  X \to W$ with $\phi c = \pi$ as claimed.
Further, both $\pi: X\to A$ and $\phi:  W \to A $ are isomorphisms away from $\Cosupp \sI$.  Thus $c$ gives a birational closed immersion (and therefore an isomorphism) onto $W^o$.
\end{proof}

\begin{lemma}\label{lem-blowupmaps}
Let $A$ be a variety of dimension $\geq 2$, and let $\sh{I}$ and  $\sh{J}$ be ideal sheaves on $A$.  Let $\sh{K}:=\sh{IJ}$.  Define $i: X \to A$ to be the blowup of $A$ at $\sh{I}$, $j:  Y \to A$ to be the blowup of $A$ at $\sh{J}$, and $k:  Z \to A$ to be the blowup of $A$ at $\sh{K}$.  
\begin{enumerate}
\item[(a)]
 There are morphisms $\xi:  Z \to X$ and $\omega:  Z \to Y$ so that the diagram 
\[ 
\xymatrix{
Z \ar[r]^{\xi} \ar[dr]_{k} \ar[d]_{\omega} & X \ar[d]^{i} \\ 
Y \ar[r]_{j}	& A
} 
\]
commutes.

\item[(b)]
We have $Z \cong (X \times_A Y)^o$.

\item[(c)]
Let $W$ be the moduli scheme of subsheaves of $\sh{K}$ of colength 1, and let $V$ be the moduli scheme of subsheaves of $\sh{I}$ of colength 1.  Let $c:  Z \to W$ and $d:  X \to V$ be the maps from Lemma~\ref{lem-ideals}, and let $Z':= c(Z)$ and $X' := d(X)$.  Then the map 
$\xi':  Z' \to X'$
induced from $\xi$ sends $\sh{K}' \subset \sh{K}$ to $(\sh{K}':  \sh{J}) \cap \sh{I}$.
\end{enumerate}
\end{lemma}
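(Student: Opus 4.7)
The proof splits into three steps, one for each part. For (a), I would verify that the pullback ideals $k^{-1}\sI\cdot\sO_Z$ and $k^{-1}\sJ\cdot\sO_Z$ are invertible on $Z$, then invoke the universal property of blowups. This reduces to a local computation: on an affine patch of $A$ where $\sI = (f_0,\dots,f_r)$ and $\sJ = (g_0,\dots,g_s)$, the blowup $Z$ is covered by the standard affine charts $\Spec A[f_j g_\ell/(f_i g_k)]$, and the identity $f_j = f_i\cdot(f_j g_k/f_i g_k)$ shows $k^{-1}\sI\cdot\sO_Z = (f_i)$ on such a chart. Since $A$ is a variety and these chart rings embed into $\operatorname{Frac}(A)$, the element $f_i$ is a nonzerodivisor, so this ideal is invertible; by symmetry $k^{-1}\sJ\cdot\sO_Z = (g_k)$ is invertible as well. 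Commutativity of the diagram is then automatic from uniqueness in the universal property.

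For (b), I would use $(\xi,\omega)$ from (a) to define a morphism $Z \to X\times_A Y$; because $Z$ is irreducible and dominates $A$, its image lies in the relevant component $(X\times_A Y)^o$. For the reverse direction, I would observe that on $(X\times_A Y)^o$ the pullbacks of $\sI$ and $\sJ$ are invertible: each factors through the corresponding projection to $X$ or $Y$, where it is invertible, and the integrality of $(X\times_A Y)^o$ preserves invertibility upon further pullback. Hence $\sK = \sI\sJ$ pulls back to an invertible ideal on $(X\times_A Y)^o$, and the universal property of $Z$ produces a morphism $(X\times_A Y)^o\to Z$. Both compositions are $A$-morphisms of integral schemes that restrict to the identity on the dense open complement of $\Cosupp(\sK)$, so by separatedness they are the identity.

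For (c), I would argue pointwise using the moduli descriptions from Lemma~\ref{lem-ideals}. A point $z\in Z$ corresponds under $c$ to a surjection $q_z\colon\sK\twoheadrightarrow L_K$ with kernel $\sK'$ at $a = k(z)$; via the identification from (b), the pair $(\xi(z),\omega(z))$ is encoded by quotients $q_x\colon\sI\twoheadrightarrow L_I$ and $q_y\colon\sJ\twoheadrightarrow L_J$ with $L_K\cong L_I\otimes L_J$, and $q_z$ is induced by $q_x\otimes q_y$ under the natural surjection $\sI\otimes\sJ\twoheadrightarrow\sK$. A stalk-level computation then shows that $r\in\sI$ lies in $\ker q_x$ if and only if $q_x(r)\otimes q_y(s) = 0$ for all $s\in\sJ$, which, by surjectivity of $q_y$, is equivalent to $rs\in\ker q_z = \sK'$ for every $s\in\sJ$; equivalently, $r\cdot\sJ\subset\sK'$. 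This identifies the subsheaf of $\sI$ picked out by $\xi(z)$ with $(\sK':\sJ)\cap\sI$.

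The main obstacle is the compatibility of moduli interpretations in (c): translating between the $\Proj$-of-Rees description of $Z$ and the colength-one-subsheaf description, and tracking the interplay of the quotients $q_x,q_y,q_z$, requires some care, and in particular uses the identification of $Z$ with $(X\times_A Y)^o$ from (b) to pin down the compatibility. The invertibility check in (a) and the universal-property juggling in (b), by contrast, are straightforward once one works in explicit affine charts.
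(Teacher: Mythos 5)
Your proof is correct, and it reaches the paper's conclusions by a route that genuinely differs in parts (b) and (c). Part (a) matches the paper: there too one observes that $k^{-1}\sK\cdot\sO_Z=(k^{-1}\sI\cdot\sO_Z)(k^{-1}\sJ\cdot\sO_Z)$ is invertible, deduces that each factor is invertible, and invokes the universal property; your chart computation is just an explicit instance of the fact that a factor of an invertible ideal on an integral scheme is invertible. For (b) the paper argues via the Segre embedding: it realizes $Z\subseteq W$ inside the Segre variety $\Sigma_{n,m}\subseteq \PP^{nm+n+m}_A$, shows the Segre map restricted to $(X\times_A Y)^o$ has image $Z$, and inverts it using the map $r=(\xi,\omega)$ from (a); you instead produce the inverse $(X\times_A Y)^o\to Z$ from the universal property of the blow-up at $\sK$ (after checking that $\sI$ and $\sJ$, hence $\sK$, pull back invertibly to the integral scheme $(X\times_A Y)^o$), closing with the standard density-plus-separatedness argument. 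Your version is coordinate-free and arguably cleaner. In (c) the paper computes in Segre coordinates that the colength-one subsheaf of $\sK$ attached to a pair $(\sI'\subseteq\sI,\ \sJ'\subseteq\sJ)$ is $\sI'\sJ+\sI\sJ'$ and then runs the same colon-ideal argument you end with; your tensor-of-quotients formulation gives the colon formula directly. The compatibility you flag as the main obstacle --- that $q_z$ is induced by $q_x\otimes q_y$ --- is precisely what the paper's Segre computation establishes; in your setup it follows from the multiplicativity $k^{-1}\sK\cdot\sO_Z=(k^{-1}\sI\cdot\sO_Z)(k^{-1}\sJ\cdot\sO_Z)$ together with the fact that $c$, $d\circ\xi$ and the analogous map for $\sJ$ classify the evaluation maps of $\sK$, $\sI$, $\sJ$ into the respective exceptional invertible ideals at $z$. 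So the step is fillable exactly as you indicate, and nothing essential is missing.
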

\begin{proof}
(a).  Since $\xi^{-1}(\sh{K}) \struct_Z = \xi^{-1}(\sh{I}) \xi^{-1}(\sh{J}) \struct_Z$ is invertible, the inverse images of both $\sh{I}$ and $\sh{J}$ on $Z$ are invertible.  By 
 the universal property of blowing up (\cite[Proposition~7.14]{Hartshorne}), the morphisms $\xi: Z \to X$ and $\omega:  Z \to Y$ exist and commute as claimed.

For (b), (c) we may without loss of generality assume that $A = \Spec C$ is affine. 
 
 (b).  Let $U:= (X \times_A Y)^o$.  Let $A': =A \smallsetminus \Cosupp \sh{K}$.  Then 
 $U$ is the closure of $A'$ in $\PP^n_A \times_A \PP^m_A$ for appropriate $n, m$.

 Let $\phi:  \PP^n_A \times_A \PP^m_A \to \Sigma_{n,m} \subset \PP^{nm+m+n}_A$ be the Segre embedding.  Note that the canonical embeddings $Z \subseteq W \subseteq \PP^{nm+m+n}_A$ actually have $W \subseteq \Sigma_{n,m}$.   Since $\phi':= \phi|_U$ is the identity over $A'$ and $Z \subseteq \Sigma_{n,m}$ is the closure of $A'$ in $\PP^{nm+n+m}_A$, we have $\phi'(U) = Z$.

Let $p:  X \times_A Y \to X$ and $q:  X \times_A Y \to Y$ be the projection maps.  
From the commutative diagram in (a) we obtain a morphism $r:  Z \to X \times_A Y$ with $qr = \omega$ and $pr = \xi$.
Further, $r$ restricts to $(\phi')^{-1}$ over $A'$.  Thus $r (Z) = U$, and $\phi':  U \to Z$ is an isomorphism.

 (c).  A point $(x,y) \in \PP^n_A \times_A \PP^m_A$ corresponds to a pair of linear ideals $\mf{n} \subset C[x_0, \ldots, x_n]$ and $\mf{m} \subset C[y_0, \ldots, y_m]$.  Let $C[(x_i y_j)_{i,j}] \subset C[(x_i)_i][( y_j)_j]$ be the homogeneous coordinate ring of $\Sigma_{n,m}$.  It is clear that the ideal defining $\phi(x,y) = \{x\} \times \PP^m \cap \PP^n \times \{y\}$ in $C[x_i y_j]$ is generated by $\mf{n}_1 \cdot (y_0, \ldots, y_m) +  (x_0, \ldots, x_n) \cdot\mf{m}_1$.  

Let $(x,y) \in (X \times_A Y)^o$, where $x$ corresponds to the colength 1 ideal $\sI' \subseteq \sI$ and $y$ corresponds to $\sJ' \subseteq \sJ$.  
That $\sh{I}' \sh{J} + \sh{I} \sh{J}'$ gives the ideal $\sh{K}' \subset \sh{K}$ corresponding to $\phi(x,y)$ follows from the previous paragraph, together with the fact that the isomorphism $\phi'$ between $(X\times_A Y)^o$ and $Z$ is given by the Segre embedding.  

 Since $\phi'$ is an isomorphism, any ideal $\sh{K}'$ corresponding to a point $z \in Z$ may be written $\sh{K}' = \sh{I}' \sh{J} +  \sh{I} \sh{J}'$ for appropriate $\sh{I}'$, $\sh{J}'$.   We thus have
 $\sh{I}' \subseteq (\sh{K}' : \sh{J})\cap \sI \subsetneqq \sh{I}.$
 Since $\sh{I}'$ is colength 1, this implies that $\sh{I}' = (\sh{K}' : \sh{J})\cap \sI$, as claimed.  
 \end{proof}

\begin{corollary}\label{cor-X-maps}
Let $X$ be a projective variety of dimension $\geq 2$, let $\sigma \in \Aut_{\kk}(X)$, and let $\sI$ be an ideal sheaf on $X$.  Let $\sI_n:= \sI \sI^{\sigma} \cdots \sI^{\sigma^{n-1}}$.  For all $n\geq 0$, let 
$a_n: X_n \to X$ be the blowup of $X$ at $\sh{I}_n$.  Then there are birational morphisms
$\alpha_n:  X_n \to X_{n-1}$ (for $n \geq 1$) and $\beta_n:  X_n \to X_{n-1}$ (for $n \geq 2$) so that the diagrams
\[ \xymatrix{
X_n \ar[rr]^{\alpha_n} \ar[rd]_{a_n} && X_{n-1} \ar[ld]^{a_{n-1}} \\
& X & 
}
\;\;\;\; \text{and} \;\; \;\;
\xymatrix{
X_n \ar[r]^{\beta_n} \ar[d]_{a_n} & X_{n-1} \ar[d]^{a_{n-1}} \\
X \ar[r]_{\sigma} & X 
}\]
commute.
\end{corollary}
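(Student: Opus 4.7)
My plan is to obtain both $\alpha_n$ and $\beta_n$ as instances of Lemma~\ref{lem-blowupmaps}(a), applied to two different factorizations of the ideal sheaf $\sI_n$.

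For $\alpha_n$ (for $n\geq 1$), I would use the factorization $\sI_n = \sI_{n-1}\cdot \sI^{\sigma^{n-1}}$, which is immediate from the definition of $\sI_n$.  Applying Lemma~\ref{lem-blowupmaps}(a) with the lemma's $(\sI, \sJ, \sK)$ replaced by $(\sI_{n-1}, \sI^{\sigma^{n-1}}, \sI_n)$ produces a morphism $\xi : X_n \to X_{n-1}$ satisfying $a_{n-1}\circ \xi = a_n$; I take this $\xi$ to be $\alpha_n$, and the first diagram is the triangle in the lemma.

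For $\beta_n$ (for $n\geq 2$), I would start from the alternative factorization
\[
\sI_n \;=\; \sI \cdot \sigma^*(\sI_{n-1}),
\]
obtained by grouping the last $n-1$ factors and recognizing $\sI^{\sigma}\cdot \sI^{\sigma^{2}}\cdots \sI^{\sigma^{n-1}} = \sigma^{*}\bigl(\sI\cdot \sI^{\sigma}\cdots \sI^{\sigma^{n-2}}\bigr) = \sigma^{*}\sI_{n-1}$.  The next step, where the one small additional observation is needed, is to note that the composition $\sigma^{-1}\circ a_{n-1}: X_{n-1}\to X$ realizes $X_{n-1}$ as the blowup of $X$ along $\sigma^{*}\sI_{n-1}$; this follows directly from the universal property of blowing up, since pulling $\sigma^{*}\sI_{n-1}$ back along $\sigma^{-1}\circ a_{n-1}$ yields the invertible sheaf $a_{n-1}^{*}\sI_{n-1}$, and universality transports along $\sigma^{\pm 1}$.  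With this identification in hand, I would apply Lemma~\ref{lem-blowupmaps}(a) with $(\sI,\sJ,\sK)$ replaced by $(\sI, \sigma^{*}\sI_{n-1}, \sI_n)$ to obtain a morphism $\omega: X_n \to \mathrm{Bl}_{\sigma^{*}\sI_{n-1}}(X)\cong X_{n-1}$, and take $\beta_n := \omega$.  The triangle produced by the lemma then reads $(\sigma^{-1}\circ a_{n-1})\circ \beta_n = a_n$, which rearranges to $a_{n-1}\circ \beta_n = \sigma\circ a_n$, as required.

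Birationality of $\alpha_n$ and $\beta_n$ is automatic: on the dense open subset $U\subseteq X$ over which $\sI_n$, $\sI_{n-1}$, and $\sigma^{*}\sI_{n-1}$ are already invertible, $a_n$ and $a_{n-1}$ are isomorphisms, and by construction $\alpha_n$ restricts to the identity while $\beta_n$ restricts (through $U$) to $\sigma$.  The only mildly delicate point in the whole argument is the identification $\mathrm{Bl}_{\sigma^{*}\sI_{n-1}}(X)\cong X_{n-1}$ used for $\beta_n$; apart from that, the statement is a purely formal consequence of Lemma~\ref{lem-blowupmaps}(a) together with the two factorizations of $\sI_n$.
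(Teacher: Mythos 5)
Your proposal is correct and follows essentially the same route as the paper: $\alpha_n$ comes from Lemma~\ref{lem-blowupmaps}(a) applied to $\sI_n = \sI_{n-1}\cdot\sI^{\sigma^{n-1}}$, and $\beta_n$ from the factorization $\sI_n = \sI\cdot\sI_{n-1}^{\sigma}$ combined with the identification of the blowup of $X$ at $\sI_{n-1}^{\sigma}$ with $X_{n-1}$ covering $\sigma$ (the paper phrases this via an isomorphism $\theta: X_{n-1}'\to X_{n-1}$ using $(\sI_p)^{\sigma}\cong\sI_{\sigma^{-1}(p)}$, which is the same observation as your universal-property argument). No gaps.
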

\begin{proof}
Let $\sh{K} := \sh{I}_n$.  Let $\zeta:  X_{n-1}'\to X$ be the blowup of $X$ at $\sh{I}_{n-1}^{\sigma}$.  Since $(\sh{I}_p)^{\sigma} \cong \sh{I}_{\sigma^{-1}(p)}$, there is an isomorphism
$\theta:  X_{n-1}' \to X_{n-1}$
so that the diagram
\[ \xymatrix{
X_{n-1}' \ar[r]^{\theta} \ar[d]_{\zeta} 	& X_{n-1} \ar[d]^{a_{n-1}} \\
X \ar[r]_{\sigma}  & X 
}
\]
commutes.   

Apply Lemma~\ref{lem-blowupmaps}(a) with $\sh{I} = \sh{I}_{n-1}^{\sigma}$ and $\sh{J} = \sh{I}_1$.  We obtain a morphism $\gamma:  X_n \to X_{n-1}'$ so that
\[ 
\xymatrix{
X_n \ar[r]^{\gamma} \ar[rd]_{a_n}	& X_{n-1}' \ar[d]^{\zeta} \ar[r]^{\theta} & X_{n-1} \ar[d]^{a_{n-1}} \\
& X \ar[r]_{\sigma}  & X 
}
\]
commutes.
Let $\beta_n:= \theta \gamma:  X_n \to X_{n-1}$.

Let $\alpha_n$ be the morphism $X_n \to X_{n-1}$ given by Lemma~\ref{lem-blowupmaps}(a) with $\sh{I} = \sh{I}_{n-1}$ and $\sh{J}= \sh{I}_1^{\sigma^{n-1}}$.  The diagram
\[ \xymatrix{
X_n \ar[r]^{\alpha_n} \ar[rd]_{a_n}	& X_{n-1} \ar[d]^{a_{n-1}} \\
& X 
}
\]
commutes, as required.
\end{proof}

We will frequently suppress the subscripts on the maps $\alpha_n, a_n$, etc., when the source and target are indicated. 
  Note that the equation
$a_n = \alpha_1 \circ \cdots \circ \alpha_n$
that follows from Corollary~\ref{cor-X-maps} may be written more compactly as
$ a = \alpha^n:  X_n \to X.$

\section{Infinite blow-ups}\label{STACKS}
In this section, we prove some general properties of infinite blow-ups that will be useful when we consider moduli spaces of 
embedded point modules.  Such infinite blow-ups can be handled in two ways: either as pro-objects in the category of schemes, or as stacks, 
via the (inverse) limits of such pro-objects in the category of spaces or of stacks.  We've chosen to treat infinite blow-ups as 
the limits rather than as pro-objects.  This is formally the correct choice, in the sense that the limit formally contains less information
than the pro-object.  We note that in our setting, we could also  work with the pro-object with no difficulties; however, we have found the language of stacks more natural.

We begin with some technical preliminaries on schemes and stacks.  
We will work with stacks in the fpqc (fid\`element plat et quasi-compact) topology; the fpqc topology of schemes is discussed in Section 2.3.2 of \cite{Vistoli}.  We are
interested in a class of stacks that are apparently not algebraic, but for which a certain amount of algebraic geometry is still possible.  
More precisely, recall that a stack ${\mathcal X}$ is called {\em algebraic} if 
the diagonal morphism of ${\mathcal X}$ is representable, separated and quasi-compact; and
 it has an fppf atlas $f: Z\rightarrow {\mathcal X}$ that is a scheme: that is, $f$ is representable, faithfully flat, and finitely presented.  By Artin's theorem \cite[Th\'eor\`eme~10.1]{LMB}
the second condition is equivalent to requiring the existence of a smooth, surjective and representable $f$.  

Our stacks
are very similar to algebraic stacks, but it seems not to be possible to find a finite-type $f$ for which $Z$ is a scheme.  
On the other 
hand, we can find $f$ for which $Z$ is a scheme and $f$ is fpqc---and even formally \'etale---so in some sense our stacks are the fpqc analogs
of algebraic stacks.  
\begin{defn}\label{fpqc alg stack}
We will refer to a stack ${\mathcal X}$ for which the diagonal 
$\Delta: {\mathcal X}\rightarrow {\mathcal X}\times_{\kk}{\mathcal X}$ is representable, separated, and quasi-compact, and which admits
a representable fpqc morphism $Z\rightarrow {\mathcal X}$ from a scheme $Z$, as {\em fpqc-algebraic}.  
\end{defn}
\noindent
Note that ``separated'' and ``quasi-compact'' make sense for fpqc stacks by \cite[IV.2~Proposition~2.7.1 and IV.2~Cor.~2.6.4]{EGA}.
Unfortunately, in this weaker setting, there are fewer notions of algebraic geometry that one can check fpqc-locally,
and hence fewer adjectives that one can sensibly apply to fpqc-algebraic stacks.  Still, one can make sense, for example, of 
representable morphisms being separated, quasi-separated, locally of finite type or of finite presentation, proper, closed
immersions, affine, etc. by \cite[IV.2~Proposition~2.7.1]{EGA}.

Recall \cite[IV.4~Def.~17.1.1]{EGA} that a morphism of schemes $f: X\rightarrow Y$ is {\em formally \'etale} if for every affine scheme $Y'$, 
closed subscheme $Y'_0\subset Y'$ defined by a nilpotent ideal and morphism $Y'\rightarrow Y$, the map
$\operatorname{Hom}_Y(Y', X)\rightarrow \operatorname{Hom}_Y(Y'_0, X)$ is bijective.    By faithfully flat descent (see 
\cite{Vistoli}), the definition extends immediately to 
stacks in the \'etale, fppf, and fpqc topologies of schemes.  

We will say that an fpqc-algebraic stack ${\mathcal X}$ is {\em noetherian} if it admits an fpqc atlas $Z\rightarrow {\mathcal X}$ by a
noetherian scheme $Z$.  Unfortunately, since fpqc morphisms need not be of finite type even locally, it does not seem to be possible
to check this property on an arbitrary atlas $Y\rightarrow {\mathcal X}$.

Suppose we have a sequence of schemes $\{ X_n \st n \in \NN\}$ and  projective morphisms $\pi_n:  X_n \to X_{n-1}$.  
We define the infinite blowup $\Xt$ to be the presheaf of sets $\Xt = \underset{\longleftarrow}{\lim}\, X_n$.  More precisely,  
we define the functor of points
$h_{\Xt}:  \text{Schemes}^{\mathrm{op}} \longrightarrow \text{Sets}.$
For each scheme  $A$, let 
\[ h_{\Xt}(A) =  \big\{(\zeta_n:  A \to X_n)_{n\in\NN} \;\big|\; \pi_n \zeta_n = \zeta_{n-1}  \big\} .\]
For each $n$, there is an induced map
$\pi:  \Xt \to X_n,$
where the target space $X_n$ is indicated explicitly.

\begin{proposition}\label{prop-algspace}
  Suppose that $X:= X_0$ is a variety of dimension $\geq 2$ and that there are  maps $\pi_n:  X_n \to X_{n-1}$ as above.
Then the stack $\Xt$ is a sheaf in the fpqc topology.  

Further, suppose that the maps $\pi_n$ satisfy the following conditions:
\begin{enumerate}
\item[(i)] For all $n$, $\pi^{-1}_n$ is defined at all but finitely many points of $X_{n-1}^o$.  That is, the set of exceptional points of $\pi^{-1}:  X \dra \Xt$ is countable; let $\{z_m\}_{m \in \NN}$ be an enumeration of this set.
 \item[(ii)] The set $\{z_m\}$ is critically dense.
\item[(iii)] For all $m$, there is some $n(m)$ so that, for $n \geq n(m)$, the map $\pi_n$ is a local isomorphism at all points in the preimage of $z_m$.   
\item[(iv)] For all $m \in \NN$, there is an ideal sheaf $\sJ_m$ on $X$, cosupported at $z_m$, so that  $X_{n(m)}$ is a closed subscheme of $\Proj \shSym_X \sJ_m$  above a neighborhood of $z_m$.  That is, $X_{n(m)} \to X$ factors as
\[ \xymatrix{ X_{n(m)} \ar[r]^(0.4){c_m} & \Proj \shSym_X \sJ_m \ar[r]^(0.7){p_m} & X}, \]
where $p_m: \Proj \shSym_X \sJ_m \to X$ is the natural map, and $c_m$ is a closed immersion over a neighborhood of $z_m$.  
\item[(v)] There is some $D \in \NN$
so that $\mathfrak{m}_{z_m}^D\mathcal{O}_{X,z_m}\subseteq \mathcal{J}_m\subseteq \mathcal{O}_{X,z_m}$
for every $m$.
\end{enumerate}
Then:  
\begin{enumerate}
\item The stack $\Xt$ is fpqc-algebraic:
\begin{enumerate}
\item[(1a)]  $\Xt$ has a representable, formally \'etale, fpqc cover by an affine scheme $U\rightarrow \Xt$.
\item[(1b)]  The diagonal morphism $\Delta: \Xt\rightarrow \Xt\times_{\kk}\Xt$ is representable, separated, and quasicompact.
\end{enumerate}
\item The morphism $\pi: \Xt\to X$ is quasicompact.
\item $\Xt$ is noetherian as an fpqc-algebraic stack.
\end{enumerate}
\end{proposition}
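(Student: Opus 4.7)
The plan is to verify the five assertions in order: $\Xt$ is an fpqc sheaf, the diagonal is representable--separated--quasicompact, there exists an affine fpqc atlas, $\pi$ is quasicompact, and $\Xt$ is noetherian as an fpqc-algebraic stack. The sheaf property is immediate: $\Xt=\underset{\longleftarrow}{\lim}\, X_n$ is a cofiltered limit of representable functors, each of which is an fpqc sheaf (fpqc being subcanonical), and cofiltered limits of sheaves are sheaves.

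For the diagonal, given a scheme $T$ and a morphism $T\to\Xt\times_\kk\Xt$ corresponding to two compatible sequences $(\zeta_n),(\zeta'_n)\colon T\to X_n$, the fiber product $T\times_{\Xt\times\Xt}\Xt$ equals the intersection in $T$ of the closed equalizers $E_n\subseteq T$ of $\zeta_n$ and $\zeta'_n$, which are closed because each $X_n$ is separated. The $E_n$ descend since $\zeta_n=\zeta'_n$ forces $\zeta_{n-1}=\alpha_n\zeta_n=\alpha_n\zeta'_n=\zeta'_{n-1}$, so their intersection is a closed subscheme of $T$. Hence $\Delta$ is representable by closed immersions, which are automatically separated and quasicompact.

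The heart of the proof is the construction of the affine atlas. For each $m$, condition (iv) furnishes an affine open $U_m\subset X_{n(m)}$ containing the fiber $F_m:=a_{n(m)}^{-1}(z_m)$, realized via $c_m$ as a closed subscheme of a standard affine chart of $\Proj\shSym_X\sJ_m$ over an affine neighborhood $V_m\subset X$ of $z_m$; the uniform bound in (v) ensures these charts have bounded local complexity. By (iii), each $U_m\hookrightarrow X_{n(m)}$ lifts canonically to a morphism into $\Xt$ on a Zariski neighborhood of $F_m$, since further blowups are local isomorphisms there. The principal obstacle is that by critical density (ii) every Zariski-open of $X$ contains all but finitely many of the remaining $z_{m'}$, so one cannot shrink $V_m$ to isolate $z_m$; the resolution is to pass to a sufficiently large $X_N$ with $N\gg n(m)$, where strict transforms of finitely many $z_{m'}$ can be separated Zariski-locally, and to shrink $U_m$ so that its image avoids all subsequent blowup centers. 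Setting $U:=\bigsqcup_m U_m$, I would verify: surjectivity on $\kk$-points (every $\zeta\in\Xt(\kk)$ either lies above some $z_m$, where the corresponding $U_m\supset F_m$ hits, or above a non-$z_m$ point, where any $U_m$ whose image $V_m\subset X$ contains it hits); faithful flatness, since each $U_m\to\Xt$ is an open immersion on a neighborhood of $F_m$; formal \'etaleness from the local model in (iv); and the fpqc-quasicompactness condition, which is local in nature.

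Once $U$ is in hand the remaining assertions follow. For quasicompactness of $\pi$: given quasicompact $T\to X$, the fiber product $T\times_X\Xt=\underset{\longleftarrow}{\lim}\,(T\times_X X_n)$ is a cofiltered limit of proper $T$-schemes, and admits an atlas via pullback of $U$ that is quasicompact because only finitely many strata of the $U_m$ contribute nontrivially thanks to the uniform bound in (v). For noetherianness: each $U_m$ is noetherian by (iv)--(v), and combining the quasicompactness of $\pi$ with the uniform bound (v) allows one to refine $U$ into a noetherian atlas $Z\to\Xt$, exhibiting $\Xt$ as a noetherian fpqc-algebraic stack.
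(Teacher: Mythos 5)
Your treatment of the sheaf property and of the diagonal agrees with the paper's and is fine. The genuine gap is in the construction of the atlas, which is the heart of the proposition. You take $U:=\bigsqcup_m U_m$ with one chart $U_m$ for each blow-up point $z_m$; this is a countably infinite disjoint union, hence neither affine nor quasi-compact, so it cannot serve as the affine $U$ demanded by (1a), and the resulting cover would not be fpqc. Worse, your proposed fix for the covering problem --- ``shrink $U_m$ so that its image avoids all subsequent blowup centers'' --- is impossible: by critical density (ii), \emph{every} nonempty Zariski-open of $X$ contains all but finitely many of the $z_{m'}$, so no open neighborhood of $z_m$ (nor of its strict transform in any $X_N$) can avoid the remaining centers. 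Passing to a large $X_N$ does not help, since the images of the later centers in $X_N$ are still dense in the relevant component. You correctly identified this obstacle but the resolution you sketch does not overcome it.

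The paper's construction is organized transversally to yours: the charts are indexed not by the points $z_m$ but by a \emph{fixed finite set} of ``directions'' $i\in\{1,\dots,N+d\}$. Using hypothesis (v) one shows every $J_m$ is generated by at most $N$ elements; then, using the uncountability of $\kk$, one chooses for each $m$ hypersurfaces $D_{m,1},\dots,D_{m,N+d}$ cutting out $\operatorname{Spec}(C/J_m)$ scheme-theoretically, generic enough that $D_{n_1,1}\cap\dots\cap D_{n_{N+d},N+d}=\emptyset$ unless all $n_j$ are equal, and that $z_{m'}\notin D_{m,i}$ for $m'\neq m$. The $i$-th chart $U_{n,i}\subset X_n$ (the locus where the $i$-th generator does not vanish in each $\Proj\shSym_X\sJ_m$ factor) then forms, as $n$ varies, an inverse system with \emph{affine} transition maps, and the genericity guarantees that $U_{n,1},\dots,U_{n,N+d}$ cover $X_n$ for every $n$. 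Hence $U_i=\varprojlim_n U_{n,i}$ is affine and $U=\bigsqcup_{i=1}^{N+d}U_i$ is a finite disjoint union of affines. This is also where the subtlety in (3) lives: each $\mathcal{O}(U_i)$ is an infinite direct limit $\varinjlim_n C_{n,i}$, and its noetherianity is not automatic --- the paper proves every prime is finitely generated using critical density (each prime is eventually comaximal with the centers $J_m$, so the tower of rings stabilizes modulo that prime). Your remark that ``each $U_m$ is noetherian'' sidesteps this only because your charts are not actually the atlas pieces one needs. Similarly, quasicompactness of $\pi$ in (2) follows in the paper simply because the finite union $U$ is quasi-compact and surjects onto $\Xt$; your appeal to ``only finitely many strata contribute'' has no precise content without the finite indexing.
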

\begin{proof}
 Because any limit  of an inverse system of sheaves taken in the category of presheaves is already a
sheaf (cf. \cite[Exercise~II.1.12]{Hartshorne}), $\Xt$ is a sheaf in the fpqc topology.

Now assume that (i)--(v) hold.   
  For $n \in \NN$, let $W_n$ be the scheme-theoretic image of $c_n$.  Let
\[ X^\prime_n := W_0 \times_X W_1 \times_X \cdots \times_X W_{n-1}.\]
Let $\pi^{\prime}_n:  X^\prime_n \to X^\prime_{n-1}$ be projection on the first $(n-1)$ factors.   
   We  first show that we can assume without loss of generality that  $X_n = X_n^\prime$; that is, we claim that $\varprojlim_{\pi^\prime} X^\prime_n \cong \varprojlim_\pi X_n$.

Let $k \in \NN$.  Let $K(k) := \max\{k, n(0),\ldots, n(k-1)\}$. For each $0 \leq m \leq k-1$, there is a morphism 
\[ \xymatrix{ X_{K(k)} \ar[rr]^{\pi^{K(k)-n(m)}} && X_{n(m)} \ar[r]^{c_m} & W_m.}\]
Since these agree on the base, we obtain an induced
$ \phi_k:   X_{K(k)} \to X^\prime_k$.
The $\phi_k$ are clearly compatible with the inverse systems $\pi$ and $\pi^\prime$.  Taking the limit, we obtain
\[ \phi:  \varprojlim X_{K(k)} \to \varprojlim X^\prime_k.\]

Now let $F_k$ be the set of fundamental points of $X \dra X_k$.  Let
$N(k) := k + \max \{ m \st z_m \in F_k \}$.
We claim there is a morphism $\psi_k: X^\prime_{N(k)} \to X_k$.  There is certainly a rational map  defined over $X \ssm \{ F_k\}$, since there $X_k$ is locally isomorphic to $X$.  Let  $z_m \in F_k$ and let $n'(m):= \max\{k, n(m)\}$.  The rational map
\[ \xymatrix{ X^\prime_{N(k)} \ar[r] & W_m \ar@{-->}[r] & X_{n'(m)} \ar[r] & X_k}\]
is then defined over a neighborhood of $z_m$.  These maps clearly agree on overlaps, so we may glue to define $\psi_k$ as claimed. 
Let
\[ \psi:  \varprojlim X^\prime_{N(k)} \to \varprojlim X_k\]
be the limit of the $\psi_k$.  It is clear that $\psi = \phi^{-1}$; note that by construction both $N(k)$ and $K(k)$ go to infinity as $k \to \infty$.

Going forward, we replace $X_n$ by $X^\prime_n$.   Thus let $Y_n: = \Proj \shSym_X \sJ_n$, and assume that there are closed immersions $i_n:  X_n \to Y_0 \times_X \cdots \times_X Y_{n-1}$ so that the $\pi_n$ are given by restricting the projection maps.   

It suffices to prove the proposition in the case that $X = \Spec C$ is affine; note that we can choose an affine subset of $X$ that contains all $z_n$.  Let $J_n \subseteq C$ be the ideal cosupported at $z_n$ so that $(J_n)_{z_n} = \sh{J}_n$.  Let ${\mathfrak m}_p$ denote the maximal ideal of $C$ corresponding to $p$.  
\begin{claim}
There is an $N$ such that every ideal $J_m$ ($m\in \NN$) is generated by at most $N$ elements.
\end{claim} 
To prove the claim,  embed 
$X$ in an affine space, i.e. choose a closed immersion $X\subseteq \mathbb{A}^\ell$.  Then each point of $\mathbb{A}^\ell$, hence 
a fortiori each point of $X$, is cut out scheme-theoretically by $\ell$ elements of $C$, and the power of the maximal ideal $\mathfrak{m}_{z_m}^D$ appearing in hypothesis (v) of the proposition is generated by $N_0:=\binom{D+ \ell-1}{\ell-1}$ elements
of $C$.  Now $J_m$ contains $\mathfrak{m}_{z_m}^D$, and  
\begin{displaymath}
\on{dim}(J_m/\mathfrak{m}_{z_m}^D) \leq  \on{dim} C/\mathfrak{m}_{z_m}^D \leq \on{dim}\kk[u_1,\dots, u_\ell]/(u_1,\dots,u_\ell)^D =: N_1.
\end{displaymath}
  Thus $J_m$ is generated by at most $N:= N_0+N_1$ elements.

We continue with the proof of the proposition:

(1a)  To construct an affine scheme $U$ with a representable, formally \'etale morphism $U\rightarrow \Xt$ we proceed as follows.
 For each $n$, and for each $1 \leq i \leq N$, we choose hypersurfaces $D_{n,i} = V(d_{n,i}) \subset X$,  with the following properties:

\begin{enumerate}
\item[(A)] For all $n$,  the elements  $d_{n,1}, \ldots, d_{n,N}$ generate $J_n$.
\item[(B)] For all $n,i$, the hypersurface $D_{n,i}$ does not contain any  irreducible component $Z$ of a hypersurface $D_{m,j}$ with $m < n$ or $m =n$ and $j < i$.  
\item[(C)] For all $n$ and each $m\neq n$, $z_m\notin  D_{n,i}$ for any $i$.
\end{enumerate}
We can make such choices because $\kk$ is uncountable and $X$ is affine (note that in order to satisfy (B), the choice of each $D_{n,i}$ will depend on finitely many earlier choices).

For each $N$-tuple of positive integers $(n_1, \dots, n_N)$, let $Z_{(n_1,\dots, n_N)} := D_{n_1, 1}\cap \dots\cap D_{n_N, N}$.  Note that
$z_m\notin Z_{(n_1,\dots, n_N)}$ unless $(n_1,\dots, n_N) = (m, m, \dots, m)$ by property (C).  Note also that, since $Z_{(n_1,\dots, n_N)}$ is a union of intersections of pairwise distinct irreducible hypersurfaces, it has codimension at least $2$ in $X$.  

Now let $\displaystyle Z^{(1)} := \bigcup_{(n_1,\dots, n_N)} Z_{(n_1,\dots, n_N)}$.  This is a countable union of irreducible subsets of $X$ of codimension at least $2$.  We may choose one point lying on each component of $Z^{(1)} \smallsetminus \{z_m\}_{m \in \NN}$.  Now, for each $n$, choose a hypersurface $D_{n, N+1}$ such that $z_n\in D_{n,N+1}$; that the local ideal of $D_{n, N+1}$ at $z_n$ is contained in $\mathcal{J}_n$; and that 
$D_{n, N+1}$ avoids all the (countably many) chosen points of components of $Z^{(1)}$ and all $z_m, m\neq n$.  Then for each $n$,
$Z^{(1)}\cap D_{n,N+1}$ is a countable union of irreducible algebraic subsets of codimension at least $3$ (it is a union of proper intersections of $D_{n,N+1}$ with irreducible subsets of codimension at least $2$).  Let $\displaystyle Z^{(2)} := \bigcup_n (Z^{(1)}\cap D_{n, N+1})$.  

Repeating the previous construction with $Z^{(2)}$, we get hypersurfaces $D_{n, N+2}$ such that each $Z^{(2)}\cap D_{n, N+2}$ is a countable union of irreducible subsets of codimension at least $4$.  Iterating, we eventually define hypersurfaces 
$D_{n, N+i}$, $i=1, \dots, d$ with the following properties:
\begin{enumerate}
\item[(A$^\prime$)]  For all $m \in \NN$, there is a scheme-theoretic equality   $\operatorname{Spec}(C/J_m) = D_{m,1}\cap \dots \cap D_{m,N+d}$.
\item[(B$^\prime$)] For every sequence $(n_1, \dots, n_{N+d})$ of positive integers, we have a set-theoretic equality
\begin{displaymath}
D_{n_1,1}\cap \dots \cap D_{n_{N+d}, N+d} = \begin{cases} z_m & \text{if $(n_1,\dots, n_{N+d}) = (m, \dots, m)$.}\\
\emptyset & \text{otherwise.}\end{cases}
\end{displaymath}
\item[(C)] For all $n$ and each $m\neq n$, $z_m\notin  D_{n,i}$ for any $i$.
\end{enumerate}

For $0 \leq n \leq m-1$, we abusively let   $\wt{D}_{n,i} \subset X_m^o$ denote the proper transform of $D_{n,i}$.   By construction, $\wt{D}_{n,1} \cap \cdots \cap \wt{D}_{n, N+d} = \emptyset$.

For each $m\in \NN$, the  map $C^{N+d} \to J_m$, $e_i \mapsto d_{m,i}$ induces a closed immersion $Y_m \to \PP^{N+d-1}_X$.  Let $V_{m,i} \subseteq Y_m$ be the open affine given by $(e_i \neq 0)$.  Note that  $V_{m,i} \cap X_m^o = X^o_m \ssm \wt{D}_{m,i}$ (recall that here $X_m^o$ denotes the closure in $X_m$ of the preimage in $X_m$ of the generic point of $X$).   Let 
\[ U_{n,i} := X_n \cap (V_{0,i}\times_X V_{1,i} \times_X \cdots \times_X V_{n-1,i} ).\]
The $U_{n,i}$ are open and affine.   Since $D_{m,i} \not\ni z_n$ for $m \neq n$, the set $\bigcup_i U_{n,i}$ includes  all irreducible components of $X_n$ except possibly for $X_n^o$.  But
\[ X_n^o \ssm \bigcup_{i=1}^{N+d} U_{n,i} = \bigcap_i \bigcup_{m=0}^{n-1} \wt{D}_{m,i} = \bigcup_m \bigcap_i \wt{D}_{m,i} = \emptyset.\]
Thus the $U_{n,i}$ are an open affine cover of $X_n$.  

Since $\pi_n |_{U_{n,i}}$ is obtained by base extension from the affine morphism $V_{n-1,i} \to X$, it is affine, and $\pi_n(U_{n,i})\subseteq U_{n-1,i}$.  
  Writing $C_i  := \underset{\longrightarrow}{\lim}\, C_{m,i}$ and $U_i := \Spec C_i$, we get 
$U_i = \underset{\longleftarrow}{\lim}\, U_{m,i}$;
all the $U_i$ are affine schemes.
By construction we obtain induced maps $U_i \to \Xt$.   Let 
$\displaystyle U :=  \bigsqcup_i  U_i.$

\begin{claim}
 The induced morphism $U \to \Xt$ is representable and formally \'etale.  
 \end{claim}

Since each map $U_i\to\Xt$ is a
limit of formally \'etale morphisms, each is itself formally \'etale.  We must show that if $T$ is a scheme equipped with a morphism $T \to \Xt$, then $T \times_{\Xt} U \to T$ is a scheme over $T$.  Each morphism $T\times_{X_m} U_{m,i}\rightarrow 
T$ is an affine open immersion since the morphisms $U_{m,i}\rightarrow X_m$ are affine open immersions.  
Hence  the morphism 
$\underset{\longleftarrow}{\lim}\, (T\times_{X_m}U_{m,i})\rightarrow T$ is an inverse limit of schemes affine over $T$ and thus is itself a scheme affine
over $T$ (see \cite[IV, Proposition~8.2.3]{EGA}).  The claim now follows from:  
\begin{lemma}\label{equality of fiber products}
For any scheme $T$ equipped with a morphism
$T\rightarrow \Xt$, we have $T\times_{\Xt} U_i \cong \underset{\longleftarrow}{\lim}\, (T\times_{X_m}U_{m,i})$.  \qed
\end{lemma}

\begin{claim}\label{surjective}
The  map $U\rightarrow \Xt$ is surjective.
\end{claim}
Surjectivity for representable morphisms can be checked locally on the target by \cite[3.10]{LMB}.  Thus, we may change base along a map $T\rightarrow\Xt$ from a scheme $T$; and, taking a point that is the image of a map $\operatorname{Spec}(K)\rightarrow T$ where $K$ is a field containing $\kk$, it suffices to find  $\operatorname{Spec}(K) \rightarrow U$ making
\begin{equation}\label{lifting points}
\xymatrix{\operatorname{Spec}(K) \ar@{=}[d]\ar[r] &  U\ar[d]\\
\operatorname{Spec}(K) \ar[r] & \Xt}
\end{equation}
commute.

Thus, suppose we are given a map $\operatorname{Spec}(K)\rightarrow \Xt$; let $y_n$ denote its image (i.e. the image of the unique point of $\operatorname{Spec}(K)$) in $X_n$. 
Let $I_n$ denote the (finite) set of those $i$ so that $y_n \in U_{n,i}$.  Since the $U_{n,i}$ cover $X_n$, each $I_n$ is nonempty; further,  $I_n \subseteq I_{n-1}$.  The intersection $\bigcap_n I_n$ is thus nonempty and contains some $i_0$.
 The maps $\operatorname{Spec}(K) \rightarrow U_{m,i_0}$ for $m \gg 0$ define a map $\displaystyle f: \operatorname{Spec}(K) \rightarrow U_{i_0} = \lim_{\longleftarrow} U_{m,i_0} \subset U$, and thus defining the map in the top row of 
\eqref{lifting points} to be $f$ gives the desired commutative diagram.
This proves the claim.

Returning to the proof of Proposition \ref{prop-algspace}(1a), let $R := U \times_{\Xt} U$.  If we define
$R_{ij} := U_i \times_{\Xt} U_j,$
then we have 
$\displaystyle R = \bigsqcup_{i,j} R_{ij}.$
Note that $R_{ij}$ is a scheme affine over $U_i$ by the previous paragraph.  Since affine schemes are quasicompact, this proves that the morphism $U\to\Xt$ is quasicompact.  Furthermore, ${\mathcal O}(R_{ij})$ is a localization of $C_i$ (obtained by inverting the images of the elements $d_{n,j}$);
so $R_{ij}\to U_i$ is flat.  We have already proved that $U\to\Xt$ is surjective, so we conclude that $U\to\Xt$ is faithfully flat.  It follows that $U\to\Xt$ is 
fpqc using \cite[Proposition~2.33(iii)]{Vistoli}.
This completes the
proof of (1a).

(1b) The diagonal $\Delta: \Xt\rightarrow \Xt\times_{\kk} \Xt$ is the inverse limit of the diagonals 
$\Delta_n: X_n\rightarrow X_n\times_{\kk} X_n$.  Similarly to Lemma \ref{equality of fiber products}, if
$V\rightarrow \Xt\times_{\kk}\Xt$ is any morphism from a scheme $V$, we get 
$\Xt\times_{\Xt\times_{\kk}\Xt} V \cong  \underset{\longleftarrow}{\lim}\; X_n\times_{X_n\times_{\kk}X_n} V$.  Since each $X_n$
is separated over $\kk$, each morphism $X_n\times_{X_n\times_{\kk}X_n} V\rightarrow V$ is a closed immersion; hence
$\Xt\times_{\Xt\times_{\kk}\Xt} V\rightarrow V$ is a closed immersion.  This proves (1b).

(2) Again, we may assume that $X$ is affine.  Then, as above, we have an fpqc cover $U \xrightarrow{p} \Xt$ by an affine scheme $U$.  Since an 
affine scheme is quasicompact and a continuous image of a quasicompact space is quasicompact, $\Xt$ is quasicompact, as desired.

(3)  By our definition, it suffices to prove that $U$ is noetherian, or, equivalently, that each $C_i$ is a noetherian ring.    This follows as in \cite[Theorem~1.5]{ASZ1999}.  
We will need:
\begin{lemma}\label{lem-proj}
 Let $A$ be a commutative noetherian ring, and let $J$ be an ideal of $A$ with a resolution
\[  \xymatrix{ A^m \ar[r]^M & A^n \ar[r] & J \ar[r] & 0.}\]
(Here $M$  is an $n\times m$ matrix acting by left multiplication.)  Let $A':= A[t_1, \ldots, t_{n-1}]/{(t_1, \ldots, t_{n-1},1) M}$.  Let $P'$ be a prime of $A'$, and let $P:= P' \cap A$.  If $P$ and $J$ are comaximal, then $P A' = P'$.
\end{lemma}
Note that $A'$ is the coordinate ring of a  chart of $\Proj\ \Sym_A J$.  
\begin{proof}
We may localize at $P$, so without loss of generality $J = A$.  Then $A' \cong A[g^{-1}]$ for some $g \in A$.  The result follows. 
\end{proof}

We return to the proof of (3).  
  It suffices to show, by \cite[Exc. 2.22]{Eis}, that each prime of $C_i$ is finitely generated.  Let $\wt{P} \neq 0$ be a prime of $C_i$.  Let $P:= \wt{P}\cap C$, and let $P_n:= \wt{P} \cap C_{n,i}$.  By critical density, there is some $n \in \NN$ so that if $m \geq n$, then $J_m$ and $P$ are comaximal.  It follows from Lemma~\ref{lem-proj} that $P_m = P_n C_{m,i}$ for $m \geq n$.  So $\wt{P} = \bigcup P_m = P_n C_i$.  This is finitely generated because  $C_{n,i}$ is noetherian, so $P_n$ is finitely generated.   

Proposition~\ref{prop-algspace} is now proved.
\end{proof}

\begin{corollary}\label{cor-Xt}
Let $X$ be a projective variety, let $\sigma \in \Aut_{\kk}(X)$, and let $\sL$ be a $\sigma$-ample invertible sheaf on $X$.  Let $P$ be a 0-dimensional subscheme of $X$, all of whose points have critically dense $\sigma$-orbits.  Let  $\sI_n:= \sI_P \sI_P^{\sigma} \cdots \sI_P^{\sigma^{n-1}}$.  Let $a_n: X_n \to X$ be the blowup of $X$ at $\sI_n$, as in Corollary~\ref{cor-X-maps}.
Let $\alpha_n:  X_n \to X_{n-1}$ be given by Corollary~\ref{cor-X-maps}.  Then the limit
\[ \Xt := \varprojlim X_n\]
is a noetherian fpqc-algebraic stack.
\end{corollary}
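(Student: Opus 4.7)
The plan is to verify hypotheses (i)--(v) of Proposition~\ref{prop-algspace} for the tower $\{X_n,\alpha_n\}$ and invoke that proposition directly.  Conditions (i) and (ii) are immediate: the morphism $\alpha_n: X_n \to X_{n-1}$ fails to be a local isomorphism exactly over $\Cosupp(\sI_P^{\sigma^{n-1}}) = \sigma^{-(n-1)}(\Supp P)$, a finite set of points of $X$.  Hence the set of exceptional points of $\pi^{-1}: X\dra\Xt$ is the countable set $\bigcup_{j\geq 0}\sigma^{-j}(\Supp P)$; critical density follows from the critical density of each orbit of a point of $\Supp P$ combined with finiteness of $\Supp P$ (any infinite subset contains an infinite subset of a single orbit).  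Enumerate this exceptional set as $\{z_m\}$.

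For (iii), observe that $S_m := \{j\geq 0 : \sigma^j(z_m)\in \Supp P\}$ has size at most $|\Supp P|$, since $\sigma$ is bijective.  Setting $n(m) := \max(S_m) + 2$, for $n\geq n(m)$ the factor $\sI_P^{\sigma^{n-1}}$ is locally trivial at $z_m$, so $(\sI_n)_{z_m} = (\sI_{n-1})_{z_m}$.  Thus $X_n$ and $X_{n-1}$ agree above a neighborhood of $z_m$, and $\alpha_n$ is a local isomorphism at every point of $a_n^{-1}(z_m)$.

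The main step is (iv).  Define $\sJ_m$ to be the ideal sheaf with stalk $(\sI_{n(m)})_{z_m}$ at $z_m$ and $\sO_{X,x}$ at every other point $x$, and let $\sI'_m$ be the complementary ideal cosupported on $\Cosupp(\sI_{n(m)})\ssm\{z_m\}$.  Disjointness of cosupports yields the factorization $\sI_{n(m)} = \sJ_m\cdot \sI'_m$.  Lemma~\ref{lem-blowupmaps}(a) then produces a morphism from $X_{n(m)}$ to the blowup of $X$ at $\sJ_m$, which Lemma~\ref{lem-ideals} embeds as a closed subscheme of $\Proj\shSym_X \sJ_m$; call the composite $c_m$.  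Over a neighborhood of $z_m$ disjoint from $\Cosupp(\sI'_m)$, $\sI'_m$ is trivial and $\sI_{n(m)} = \sJ_m$, so $c_m$ restricts to a closed immersion there.  For (v), $0$-dimensionality of $P$ supplies a uniform $D_0$ with $\mathfrak{m}_p^{D_0}\mathcal{O}_{X,p}\subseteq \sI_{P,p}$ for all $p\in\Supp P$; each of the at most $|\Supp P|$ nontrivial factors of $\sI_{n(m)}$ at $z_m$ is identified via some $\sigma^j$ with the stalk of $\sI_P$ at $\sigma^j(z_m)$, hence contains $\mathfrak{m}_{z_m}^{D_0}\mathcal{O}_{X,z_m}$.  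Multiplying, $\sJ_m$ contains $\mathfrak{m}_{z_m}^{D}\mathcal{O}_{X,z_m}$ for $D := |\Supp P|\cdot D_0$, uniformly in $m$.

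The main obstacle is (iv): the blowup $X_{n(m)}$ is defined by an ideal with several cosupport points, whereas Proposition~\ref{prop-algspace} requires an embedding into the Proj-Sym of an ideal cosupported only at $z_m$.  The disjoint-cosupport factorization $\sI_{n(m)} = \sJ_m\cdot \sI'_m$ resolves this cleanly via Lemma~\ref{lem-blowupmaps}(a), reducing the global blowup to a single-point model near $z_m$.  With all five hypotheses verified, Proposition~\ref{prop-algspace} concludes that $\Xt$ is a noetherian fpqc-algebraic stack.
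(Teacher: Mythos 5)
Your proof is correct and follows the same route as the paper: the paper's entire proof of this corollary is ``This follows immediately from Proposition~\ref{prop-algspace},'' leaving the verification of hypotheses (i)--(v) implicit. Your detailed check of those hypotheses---in particular the disjoint-cosupport factorization $\sI_{n(m)}=\sJ_m\cdot\sI'_m$ used for (iv) and the uniform bound $D=|\Supp P|\cdot D_0$ for (v)---is sound and supplies exactly what the paper omits.
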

\begin{proof}
This follows immediately from Proposition~\ref{prop-algspace}.
\end{proof}

\section{Moduli schemes for truncated point modules}\label{PTSCHEMES}
Let $X$ be a  projective variety, let $\sigma \in \Aut_{\kk}(X)$, and let $\sL$ be a $\sigma$-ample invertible sheaf on $X$.  Let $P$ be a 0-dimensional subscheme of $X$, all of whose points have critically dense $\sigma$-orbits.    We define
$\sS:= \sS(X, \sL, \sigma, P)$
and
$S:= S(X, \sL, \sigma, P),$
as in Section~\ref{BACKGROUND}.  
As usual, we assume that $S$ is generated in degree one.

In this section, we construct moduli schemes of truncated point modules over $\sh{S}$ and $S$.  
In the next section, we compare them.  
We begin by constructing moduli schemes for shifted point modules for an arbitrary connected graded noetherian algebra generated in degree 1, generalizing slightly results of  \cite{ATV1990} and \cite{RS}. 

Let $C$ be any commutative $\kk$-algebra.  
Recall that we use subscript notation to denote changing base.  Thus if $R$ is a $\kk$-algebra,  we write
$R_C:= R\otimes_\kk C$.
We write 
$X_C:= X \times_{\kk} \Spec C.$  Recall that a {\em $C$-point module (over $R$)} is a graded factor $M$ of $R_C$ so that $M_i$ is rank 1 projective for $i \geq 0$.  An {\em $\ell$-shifted $C$-point module (over $R$)} is a factor of $(R_C)_{\geq \ell}$ that is rank 1 projective in degree $\geq \ell$.  
A {\em truncated $\ell$-shifted $C$-point module of length $m$} is a factor module of $(R_C)_{\geq \ell}$ so that $M_i$ is rank 1 projective over $C$ for $\ell \leq i \leq \ell+m-1$ and $M_i = 0$ for $i \geq \ell +m$.  Since these modules depend on  a finite number of parameters, they are clearly parameterized up to isomorphism by a projective scheme.
For fixed $\ell\leq n$, we denote the $\ell$-shifted length $(n-\ell+1)$ point scheme of $R$ by ${}_{\ell}Y_n$.
A point in ${}_{\ell}Y_n$ gives a surjection $R_{\geq \ell} \to M$ (up to isomorphism), or equivalently a submodule of $R_{\geq \ell}$ with appropriate Hilbert series. 
Thus we say that ${}_{\ell}Y_n$ parameterizes {\em embedded} (shifted truncated) point modules.
  The map $M \mapsto M/M_n$ induces a morphism $\chi_n:  {}_{\ell}Y_n \to {}_{\ell}Y_{n-1}$. 

For later use, we explicitly construct a projective embedding of ${}_{\ell}Y_n$.  

\begin{proposition}\label{prop-likeATV}
(cf. \cite[section~3]{ATV1990})
 Let $R$ be a connected graded $\kk$-algebra generated in degree 1.
\begin{enumerate}
 \item For all $\ell\leq n\in \NN$, there is a closed immersion
\[ {}_{\ell}\Pi_{n}:  {}_{\ell}Y_n \to \PP((R_1^{\otimes \ell})^{\vee}) \times \PP(R_1^{\vee})^{\times (n-\ell)}.\]
\item Fix $\ell\leq n$ and let 
\[ \pi:  \PP((R_1^{\otimes \ell})^{\vee}) \times \PP(R_1^{\vee})^{\times( n-\ell)}\to \PP((R_1^{\otimes \ell})^{\vee}) \times \PP(R_1^{\vee})^{\times (n-\ell-1)}\]
be projection onto the first $n-\ell$ factors.  Then the diagram
\beq\label{embedding} \xymatrix{
 {}_{\ell}Y_n \ar[r]^(0.25){{}_{\ell}\Pi_{ n}} \ar[d]_{\chi_n} & \PP((R_1^{\otimes \ell})^{\vee}) \times \PP(R_1^{\vee})^{\times (n-\ell)} \ar[d]^{\pi} \\
{}_{\ell}Y_{n-1} \ar[r]_(0.25){{}_{\ell}\Pi_{n-1}} & \PP((R_1^{\otimes \ell})^{\vee}) \times \PP(R_1^{\vee})^{\times (n-\ell-1)}
}
\eeq
commutes.

\end{enumerate}
\end{proposition}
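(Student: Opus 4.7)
The plan is to realize ${}_\ell\Pi_n$ functorially and then identify its image with a natural closed subscheme of the product of projective spaces, following the template of \cite[Section~3]{ATV1990}. For a commutative $\kk$-algebra $C$ and a truncated $\ell$-shifted $C$-point module $M = \bigoplus_{i=\ell}^n M_i$, presented as a surjection $(R_C)_{\geq \ell} \twoheadrightarrow M$, I would extract two kinds of data. First, from $M_\ell$: since $R$ is generated in degree one, the composite $R_1^{\otimes \ell} \otimes C \twoheadrightarrow R_\ell \otimes C \twoheadrightarrow M_\ell$ is a rank-one projective quotient of $R_1^{\otimes \ell} \otimes C$ and so defines a $C$-point of $\PP((R_1^{\otimes \ell})^\vee)$. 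Second, for each $\ell+1 \leq i \leq n$, the multiplication $M_{i-1}\otimes_C R_1 \twoheadrightarrow M_i$, twisted by $M_{i-1}^{-1}$, yields a rank-one projective quotient of $R_1 \otimes C$, and hence a $C$-point of $\PP(R_1^\vee)$. These data assemble naturally in $C$ to define the morphism ${}_\ell\Pi_n$.

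To show ${}_\ell\Pi_n$ is a closed immersion, I would identify its image with a closed subscheme $W \subseteq Z := \PP((R_1^{\otimes \ell})^\vee) \times \PP(R_1^\vee)^{\times(n-\ell)}$. On $Z$, write $\sL_\ell$ for the universal rank-one quotient of $R_1^{\otimes \ell} \otimes \sO_Z$ and, for $\ell+1 \leq i \leq n$, write $\sK_i$ for the universal rank-one quotient of $R_1 \otimes \sO_Z$ on the $i$th factor. Setting $\sL_i := \sL_\ell \otimes \sK_{\ell+1} \otimes \cdots \otimes \sK_i$ for $i > \ell$, there is, for each $\ell \leq i \leq n$, a natural composite surjection
\[ R_1^{\otimes i} \otimes \sO_Z \twoheadrightarrow \sL_i. \]
The subscheme $W$ is cut out by the closed conditions that, for every such $i$, this composite vanish on the (finite-dimensional) kernel of the multiplication map $R_1^{\otimes i} \twoheadrightarrow R_i$; equivalently, that each composite factors through $R_i \otimes \sO_Z$. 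By construction ${}_\ell\Pi_n$ factors through $W$.

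Conversely, a $C$-point of $W$ furnishes line bundles $L_i$ on $\Spec C$ for $\ell \leq i \leq n$, together with surjections $R_i \otimes C \twoheadrightarrow L_i$ obtained from the factoring conditions; I would check that these assemble into a graded quotient $(R_C)_{\geq \ell} \twoheadrightarrow \bigoplus L_i$, producing an inverse $W \to {}_\ell Y_n$ to ${}_\ell\Pi_n$ and proving (1). For (2), commutativity of \eqref{embedding} is immediate: the projection $\pi$ simply discards the factor $\sK_n$, which on $C$-points of ${}_\ell Y_n$ corresponds to forgetting the multiplication $M_{n-1}\otimes R_1 \twoheadrightarrow M_n$ and hence to passing from $M$ to $M/M_n = \chi_n(M)$.

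The main subtlety, in constructing the inverse $W \to {}_\ell Y_n$, is to verify that the factoring conditions on $W$ are enough to guarantee a well-defined right $R$-module structure on $\bigoplus L_i$---equivalently, that the kernels of $R_i \otimes C \twoheadrightarrow L_i$ assemble into a right ideal of $R_{\geq \ell} \otimes C$. This reduces to the following observation: for $\alpha \in \ker(R_i \twoheadrightarrow L_i)$ and $\beta \in R_j$ with $i+j \leq n$, any lift $\tilde\alpha \in R_1^{\otimes i}$ of $\alpha$ already maps to zero in $L_i$ by the factoring condition at $i$, so any product $\tilde\alpha\cdot\tilde\beta \in R_1^{\otimes(i+j)}$ maps to zero in $L_{i+j} = L_i \otimes \sK_{i+1}\otimes\cdots\otimes\sK_{i+j}$; this is the multilinear-algebra core of the argument, directly analogous to the quadratic case treated in \cite[Section~3]{ATV1990}.
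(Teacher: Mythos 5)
Your proposal is correct and follows essentially the same route as the paper: both linearize the defining relations of $R$ to cut out a closed subscheme of $\PP((R_1^{\otimes \ell})^{\vee}) \times \PP(R_1^{\vee})^{\times (n-\ell)}$ and then verify that this subscheme represents the moduli functor, with part (2) immediate from the construction. The only differences are presentational---you phrase the equations via universal quotient bundles and impose the factoring conditions in every degree $\ell\leq i\leq n$, whereas the paper uses multilinear forms and imposes only the degree-$n$ conditions $\widetilde{I}_n=0$ (which imply the lower-degree ones on $C$-points), and the paper reduces to free graded components and then descends rather than working directly with projective modules.
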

 \begin{proof}
  (1)
Let $T=T^{\bullet}(R_1)$ denote the tensor algebra on the finite-dimensional $\kk$-vector space $R_1$.  
We identify $T_1$ canonically with $R_1$ and $T_\ell$ with $R_1^{\otimes \ell}$.

Given an element $f\in T_{n}$, we get an $(\ell, 1, \dots, 1)$-form 
\begin{displaymath}
\widetilde{f}: T_\ell^\vee \times (T_1^\vee)^{\times (n-\ell)} \longrightarrow \kk
\end{displaymath}
by pairing with $f$.  The map is $\kk$-multilinear, hence $\widetilde{f}$ defines a hypersurface $Y(\widetilde{f})$ in
$\mathbb{P}(T_\ell^\vee)\times (\mathbb{P}(T_1^\vee)^{\times( n-\ell)}$.  More generally, given a collection $\{f_i\}$ of 
elements of $T_{n}$, we get a closed subscheme
\begin{displaymath}
Y(\{\widetilde{f_i}\})\subseteq \mathbb{P}(T_\ell^\vee)\times (\mathbb{P}(T_1^\vee)^{\times (n-\ell)}.
\end{displaymath}

Let $I$ be the kernel of the natural surjection $T \twoheadrightarrow R$.    Then the above construction gives a closed subscheme 
$Y(\widetilde{I}_{n})\subseteq \mathbb{P}(T_\ell^\vee)\times (\mathbb{P}(T_1^\vee)^{\times (n-\ell)}$.  We claim that $ Y(\widetilde{I}_{n})$ is naturally isomorphic to ${}_{\ell}Y_n$.

Let $C$ be a commutative $\kk$-algebra and let $R_C = R\otimes_{\kk} C$, $T_C = T\otimes_{\kk} C$ with the gradings induced from $R$ (respectively $T$).  Suppose that $\overline{\alpha}: (R_C)_{\geq \ell}\rightarrow M$ is an embedded $\ell$-shifted truncated $C$-point module of length $n-\ell+1$.  We write $\alpha: (T_C)_{\geq \ell} \rightarrow M$ for the composite of the two surjections.  Assume that $M = \oplus_{i=\ell}^{n} m_i\cdot C $ is a free graded $C$-module on generators $m_i$.  Then $\alpha$ determines $C$-linear maps $a_j:  T_1 \otimes_{\kk} C\rightarrow C$ 
for $1\leq j\leq n-\ell$ by $m_{\ell+j-1} x = m_{\ell+j} a_j(x)$ for $x\in T_1 \otimes_{\kk} C$, and a $C$-linear map
$b:  T_\ell \otimes_{\kk} C\rightarrow C$ by $\alpha(y) = m_\ell b(y)$ for $y\in  T_\ell \otimes_{\kk} C$.  Since $M$ is a (shifted, truncated) point module, hence generated in degree $\ell$, these maps are surjective.  Hence they determine a morphism
\begin{displaymath}
\Pi(\alpha) = (b, a_1,\dots, a_{n-\ell}): \operatorname{Spec}(C) \longrightarrow \mathbb{P}(T_\ell^\vee)\times (\mathbb{P}(T_1^\vee)^{\times (n-\ell)}.
\end{displaymath}
We see immediately from the construction that if 
$f\in I_{n}\otimes_{\kk} C$, then $\widetilde{f}(\Pi(\alpha)) = 0$.  In particular, $\Pi(\alpha)$ factors through
$Y(\widetilde{I}_{n})$.  

It follows immediately that the above construction defines a morphism $\Pi$ from the moduli functor of shifted truncated point modules with {\em free} (as $C$-modules) graded components to $Y(\widetilde{I}_{n})$.  Since the latter is a scheme, hence a sheaf in the fpqc topology, $\Pi$ induces a morphism, which we denote by ${}_{\ell}\Pi_n$, from the 
moduli functor ${}_{\ell}Y_{n}$ for all shifted truncated $C$-point modules over $R$ to $Y(\widetilde{I}_{n})$.

\begin{claim}
The morphism ${}_{\ell}\Pi_n$ is an isomorphism: that is, $Y(\widetilde{I}_{n}) \cong {}_{\ell}Y_n$ represents the moduli functor of embedded truncated 
$\ell$-shifted  $C$-point modules over $R$ of length $n-\ell+1$.
\end{claim}
\begin{proof}[Proof of Claim.]
A morphism $\operatorname{Spec}(C)\rightarrow Y(\widetilde{I}_{n}) \subseteq \mathbb{P}(T_\ell^\vee)\times (\mathbb{P}(T_1^\vee)^{\times (n-\ell)}$  gives a tuple 
$(b, a_1, \dots, a_{n-\ell})$ where each $a_j$ is a surjective $C$-linear map $a_j:  T_1 \otimes_{\kk} C\rightarrow N_j$ and each $N_j$ is a finitely generated projective $C$-module of rank $1$; and $b: T_\ell \otimes_{\kk}  C\rightarrow M_\ell$ is a surjective $C$-linear map onto a finitely generated projective $C$-module $M_\ell$ of rank $1$. 

Assume first that $M_\ell$ and each $N_j$  is a free $C$-module, and choose basis elements.  Define a $T_C$-module $M = \oplus_{j=\ell}^{n}  m_j  \cdot C$ by $m_{j-1} x = m_j a_{j-\ell}(x)$ for $x\in T_1 \otimes_{\kk} C$.  
Moreover, define a map $\alpha: (T_C)_{\geq \ell} \rightarrow M$ by $\alpha(y) = m_\ell b(y)$ for $y\in T_\ell$ and extending linearly.  
It is a consequence of the construction of $Y(\widetilde{I}_{n})$ that 
the map $\alpha$ factors through $(R_C)_{\geq \ell}$ and makes $M$ an $\ell$-shifted truncated $C$-point module over $R$.  

Next, we observe that the functor $\Pi$  (on shifted truncated point modules with free $C$-module components) and the above construction (on maps $\operatorname{Spec}(C)\rightarrow Y(\widetilde{I}_{n})$ for which the modules $N_j$ and $M_\ell$ are free $C$-modules) give mutual inverses.  This follows from the argument of \cite[3.9]{ATV1990}, which uses only the freeness condition.  In particular, the functor $\Pi$ 
is injective.  

To prove that the sheafification ${}_{\ell}\Pi_n$ is an isomorphism, then, it suffices to show that $\Pi$ is
locally surjective: that is, for every morphism $\operatorname{Spec}(C)\rightarrow Y(\widetilde{I}_{n})$, there is a faithfully flat morphism $\operatorname{Spec}(C')\rightarrow \operatorname{Spec}(C)$, and a shifted truncated $C'$-point module with free $C'$-module components, whose image under $\Pi$ is the composite map 
$\operatorname{Spec}(C')\rightarrow Y(\widetilde{I}_{n})$.  But it is standard that such a homomorphism $C\rightarrow C'$ can be found that makes each $N_j$ and $M_\ell$ trivial, and now the construction of the previous paragraph proves the existence of the desired shifted truncated point module.  This completes the proof of the claim.
\end{proof}
\noindent
Part (1) follows from the claim. 
(2) follows by construction.
\end{proof}

\begin{proposition}\label{prop:ATV stabilise}
(cf. \cite[Proposition~3.6]{ATV1990})
 Let $R$ be a connected graded $\kk$-algebra generated in degree one.  Let $n > \ell$ and consider the truncation morphism
\[ \chi_n:  {}_{\ell}Y_n \to {}_{\ell}Y_{n-1}.\]
Let $y \in {}_{\ell}Y_{n-1}$ and suppose that $\dim \chi_n^{-1}(y) = 0$.  Then $\chi_n^{-1}$ is defined and is a local isomorphism locally in a neighborhood of $y$. 
\end{proposition}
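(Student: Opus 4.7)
The plan is to leverage the multilinear structure of the embedding in Proposition~\ref{prop-likeATV} in order to give the fibers of $\chi_n$ a linear structure.

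First, via the embedding ${}_\ell\Pi_n$, I would realize ${}_\ell Y_n$ as the closed subscheme of $\PP((R_1^{\otimes \ell})^\vee) \times \PP(R_1^\vee)^{\times (n-\ell)}$ cut out by the equations $\widetilde{f}$ for $f \in I_n$, so that $\chi_n$ becomes the restriction of the projection forgetting the last factor. Each relation $\widetilde{f}$ has multidegree $(\ell, 1, \dots, 1)$, so specializing the first $n-\ell$ coordinates to $y$ makes the remaining equations linear in the last copy of $\PP(R_1^\vee)$. Consequently, the scheme-theoretic fiber $\chi_n^{-1}(y)$ is a linear subspace of $\PP(R_1^\vee)$.

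A zero-dimensional linear subspace of projective space is a single reduced $\kk$-rational point, so the hypothesis $\dim \chi_n^{-1}(y)=0$ forces $\chi_n^{-1}(y) = \{x\}$ scheme-theoretically with $k(x) = k(y)$. Next, $\chi_n$ is projective and has zero-dimensional fiber at $y$, so it is finite on some open neighborhood of $x$. The reduced length-one fiber and Nakayama's lemma then give that the local ring map $\mathcal{O}_{{}_\ell Y_{n-1}, y} \to \mathcal{O}_{{}_\ell Y_n, x}$ is surjective, so $\chi_n$ restricts to a closed immersion on some open neighborhood $V$ of $x$. By properness of $\chi_n$, the complement $U$ of $\chi_n({}_\ell Y_n \smallsetminus V)$ is an open neighborhood of $y$ with $\chi_n^{-1}(U) \subseteq V$, producing a closed immersion $\chi_n : \chi_n^{-1}(U) \to U$.

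The main obstacle is to upgrade this local closed immersion to a local isomorphism, constructing an inverse $\chi_n^{-1}$ on an open neighborhood of $y$. For this I would use the linear fiber structure a second time: over $U$, the fibers of $\chi_n$ are cut out by a linear system whose rank at $y$ is $\dim R_1 - 1$, and by lower semicontinuity of rank this bound persists on an open neighborhood of $y$. Combined with the fact that the image of $\chi_n$ is closed and contains $y$, after shrinking $U$ the rank is constantly $\dim R_1 - 1$ on $U$; the resulting one-dimensional kernels then assemble into a line subbundle of $R_1^\vee \otimes \mathcal{O}_U$ whose induced section $U \to U \times \PP(R_1^\vee)$ factors through $\chi_n^{-1}(U)$ and furnishes the inverse to $\chi_n$.
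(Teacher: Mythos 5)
Your first three steps are sound and follow the same route as the paper: the multilinear embedding of Proposition~\ref{prop-likeATV} makes the scheme-theoretic fibres of $\chi_n$ linear subspaces of $\PP(R_1^\vee)$, so the hypothesis forces $\chi_n^{-1}(y)$ to be a single reduced point $x$ with residue field that of $y$, and the properness-plus-Nakayama argument then shows that $\chi_n$ is a closed immersion over some open $U\ni y$. This is the content of \cite[Proposition~3.6(ii)]{ATV1990} that the paper's proof invokes; ATV reach the same conclusion via your rank observation, using that the locus where the linear system has rank $\geq \dim R_1-1$ is open, so that all nearby fibres are either empty or single reduced points.

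The gap is in your last step. To make the rank \emph{constantly} $\dim R_1-1$ near $y$ you need every nearby fibre to be nonempty, i.e.\ you need an open neighbourhood of $y$ to lie inside $\operatorname{im}(\chi_n)$. Closedness of the image together with $y\in\operatorname{im}(\chi_n)$ gives no such thing --- closedness points the wrong way --- and in general no such neighbourhood exists. Concretely, take $R=\kk[x,y]/(y^2)$, $\ell=0$, $n=2$: then ${}_0Y_1=\PP(R_1^\vee)=\PP^1$, while ${}_0Y_2\cong \operatorname{Spec}\kk[\epsilon]/(\epsilon^2)$ sits over the single point $y=(1{:}0)$ and $\chi_2^{-1}(y)$ is a single reduced point. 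Here $\chi_2$ is a closed immersion near that point, but it admits no section over any open neighbourhood of $y$, and the induced map of local rings is surjective but not injective. So the two-sided local isomorphism cannot be extracted from the stated hypotheses alone; one needs the further input that $y$ has a neighbourhood contained in the image of $\chi_n$ (equivalently, local scheme-theoretic surjectivity of $\chi_n$ near $y$). This is how the conclusion is actually exploited later in the paper, where the closed immersion is combined with scheme-theoretic surjectivity onto the essential point schemes ${}_{\ell}Y_n'$ (``a scheme-theoretically surjective closed immersion is an isomorphism,'' end of the proof of Theorem~\ref{thm:ptschemes}). You should either stop at the closed-immersion statement, which your third step already delivers and which is what the cited ATV argument gives, or make the surjectivity hypothesis explicit before building the line subbundle and its section.
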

\begin{proof}
We consider the commutative diagram \eqref{embedding} of Proposition~\ref{prop-likeATV}(2).  
By Proposition~\ref{prop-likeATV}(1) the horizontal maps are closed immersions.
 Since the defining equations of $Y(\widetilde{I}_{n}) \subseteq \mathbb{P}(T_\ell^\vee)\times (\mathbb{P}(T_1^\vee)^{\times(n-\ell)}$ are $(\ell, 1, \dots, 1)$-forms and in particular are linear in the last  coordinate, the fibers of $\chi_n$ are linear subspaces of $\PP(T_1^{\vee})$.  The result  follows as in the proof of \cite[Proposition~3.6(ii)]{ATV1990}.
\end{proof}

\begin{proposition}\label{prop-likeRS}
 {\em (cf. \cite[Proposition~2.5]{RS})}
Let $R$ be a noetherian connected graded $\kk$-algebra generated in degree 1.  
For $n >\ell \geq 0$, define $\chi_n:  {}_{\ell}Y_n \to {}_{\ell}Y_{n-1}$ as in the beginning of the section.  
Let $n_0 \geq 0$ and let  $\{ y_n \in {}_{\ell} Y_n \st n \geq n_0\}$ be a sequence of (not necessarily closed) points so that $\chi_n(y_n) = y_{n-1}$ for all $n > n_0$.  
Then for all $n \gg n_0$ the fibre $\chi_n^{-1}(y_{n-1})$ is a singleton and $\chi_n^{-1}$ is defined and is a local isomorphism at $y_{n-1}$.
\end{proposition}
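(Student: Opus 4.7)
By Proposition~\ref{prop:ATV stabilise}, it suffices to show that for $n$ sufficiently large, $\dim \chi_n^{-1}(y_{n-1}) = 0$ (equivalently, the fibre is a singleton, since it is an irreducible linear subspace of $\PP(R_1^\vee)_{\kappa(y_{n-1})}$ by the proof of Proposition~\ref{prop:ATV stabilise}). My plan is to assemble the compatible sequence $\{y_n\}$ into a single (untruncated) shifted embedded point module at the colimit of residue fields, then use right-noetherianness of $R$ to force uniqueness of the next-degree extension for large $n$.

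First, each $y_n$ corresponds to an embedded shifted truncated point module $\alpha_n: (R\otimes_\kk \kappa(y_n))_{\geq \ell}\twoheadrightarrow M^{(n)}$. The compatibility $\chi_n(y_n)=y_{n-1}$ supplies a field embedding $\kappa(y_{n-1})\hookrightarrow \kappa(y_n)$ together with an identification of the length-$(n-\ell)$ truncation of $M^{(n)}$ with $M^{(n-1)}\otimes_{\kappa(y_{n-1})}\kappa(y_n)$. Setting $K := \varinjlim \kappa(y_n)$, the $M^{(n)}$ assemble into an embedded shifted \emph{untruncated} point module $\alpha_\infty:(R\otimes_\kk K)_{\geq \ell}\twoheadrightarrow M_\infty$, whose graded kernel $I_\infty\subseteq (R\otimes_\kk K)_{\geq \ell}$ satisfies $\dim_K (I_\infty)_d = \dim_\kk R_d - 1$ for $d\geq \ell$.

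Next, using right-noetherianness of $R$, I will argue that $I_\infty$ is, in sufficiently large degree, forced by its lower-degree pieces: there exists $D$ such that for every $n>D$ one has $(I_\infty)_n = (I_\infty)_{n-1}\cdot (R\otimes_\kk K)_1$. This is the statement that, in degree $>D$, the codimension-$1$ subspace $(I_\infty)_n \subset R_n\otimes K$ must equal $(I_\infty)_{n-1}\cdot R_1$, i.e., no genuinely new relations appear. Since each $(I_\infty)_d$ is cut out by a single linear functional whose coefficients (against a $\kk$-basis of $R_d^\vee$) involve only finitely many elements of $K$, the ideal $I_\infty$ descends, in any bounded range of degrees, to an ideal over a finitely generated $\kk$-subalgebra $C\subseteq K$. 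Right-noetherianness of $R$ (combined with noetherianness of $C$) then bounds the degree of generation uniformly.

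Granted this bound $D$, for $n>D$ the datum of $M^{(n-1)}$ determines $(I_\infty)_n$, hence $M^{(n)}$, uniquely as an embedded shifted truncated point module. Translated to moduli, this means $\chi_n^{-1}(y_{n-1})$ contains no additional points beyond $y_n$, so $\chi_n^{-1}(y_{n-1}) = \{y_n\}$ is zero-dimensional. Proposition~\ref{prop:ATV stabilise} then applies, yielding that $\chi_n^{-1}$ is defined and a local isomorphism at $y_{n-1}$.

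The principal obstacle is Step 2, the bounded generation of $I_\infty$: $R\otimes_\kk K$ need not itself be right noetherian (this is precisely the failure of strong noetherianness that the paper is built around), so one cannot simply quote noetherianness of the base-changed ring. The workaround is a careful descent argument exploiting the codimension-$1$ structure of $I_\infty$ in each degree, together with the right-noetherianness of $R$ applied over suitable finitely generated (hence noetherian) $\kk$-subalgebras of $K$, followed by lifting the uniform degree bound back to $K$.
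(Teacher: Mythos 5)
Your overall architecture is the correct one, and it is in fact the same as that of the proof of \cite[Proposition~2.5]{RS}, which the paper simply cites: reduce via Proposition~\ref{prop:ATV stabilise} to showing the fibres are eventually zero-dimensional, assemble the compatible sequence $\{y_n\}$ into an honest $\ell$-shifted embedded point module $R_K/I_\infty$ over $K=\varinjlim \kappa(y_n)$, and deduce the claim from generation of $I_\infty$ in bounded degree. The trouble is that the one step you yourself flag as ``the principal obstacle''---bounded generation of $I_\infty$---is left unproved, and your reason for believing the direct route is blocked rests on a misconception. The failure of the strong noetherian property concerns tensoring with general commutative noetherian $\kk$-algebras (the offending base rings are non-reduced or positive-dimensional); it says nothing against tensoring with \emph{field} extensions. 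By \cite[Theorem~5.1]{ASZ1999}, $R\otimes_\kk K$ \emph{is} noetherian for a field extension $K\supseteq \kk$ in the present setting---the paper invokes exactly this fact in the proof of Lemma~\ref{lem-local-torsion-cat}---so $I_\infty$ is finitely generated and the uniform degree bound $D$ is immediate.

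As written, your substitute ``descent'' argument does not close the gap. In any fixed range of degrees $\leq N$ the ideal does descend to a finitely generated field extension $C_N\subseteq K$ of $\kk$, and $R\otimes_\kk C_N$ is noetherian; but the bound on degrees of generators obtained this way a priori depends on $N$ (and on $C_N$), and nothing in your sketch makes it uniform in $N$. A directed union of noetherian rings need not be noetherian, so you cannot pass to the limit for free: the uniformity is precisely the content of what you are trying to prove, and asserting that noetherianness of $R$ ``then bounds the degree of generation uniformly'' is a restatement of the goal rather than an argument. Once this step is replaced by the citation to \cite{ASZ1999}, the remainder of your proof---for $n>D$ the fibre $\chi_n^{-1}(y_{n-1})$ is a linear subspace forced to equal the single point determined by the codimension-one subspace $(I_\infty)_{n-1}\cdot R_1$, whence Proposition~\ref{prop:ATV stabilise} applies---goes through.
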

\begin{proof}
 This follows as in the proof of \cite[Proposition~2.5]{RS}, using Proposition~\ref{prop:ATV stabilise} instead of \cite[Proposition~3.6(ii)]{ATV1990}.
\end{proof}

We are interested in studying the limit ${}_{\ell}\Yt := \varprojlim {}_{\ell}Y_n$; however, we first  study the point schemes of $\sh{S}$.  That is, for $n \geq \ell \geq 0$,   
it is clear that we may also define a scheme that  parameterizes factor modules $\sh{M}$ of $\sh{S}_{\geq \ell}$ so that, as graded $\struct_X$-modules, $\sh{M} \cong \kk_x t^{\ell} \oplus \cdots \oplus \kk_x t^n$ for some $x \in X$.  
We say that $x$ is the {\em support} of $\sh{M}$.  
We denote this $\ell$-shifted length $(n-\ell +1)$ truncated point scheme of $\sh{S}$ by ${}_{\ell}Z_n$.    
More formally, a $\operatorname{Spec}(C)$-point of ${}_{\ell}Z_n$ will be a factor module $\sh{M}$ of $ \sh{S}_{\geq \ell} \otimes_{\kk}  C$ which is isomorphic as a graded $\struct_{X_C}$-module 
to a direct sum $P_\ell \oplus \dots \oplus P_n$, where each $P_i$ is a coherent $\struct_{X_C}$-module that is finite over $C$ (in the sense that its support in $X_C$ is finite over $\operatorname{Spec}(C)$) and is a rank one projective $C$-module (which is well defined because of the finite support condition). 
We let  $Z_n:={}_0Z_n$ be the unshifted length $n+1$ point scheme of $\sS$.

For all $n  > \ell \geq 0$, there are maps
\[
\phi_n:  {}_{\ell}Z_n  \to {}_{\ell}Z_{n-1} \;\;\; \text{defined by} \;\;\; 
\sh{M}  \mapsto \sh{M}/\sh{M}_n.\]

If $\ell = 0$ and $\sh{M}$ is a truncated point module of length $n$ over $\sh{S}$, then $\sh{M}[1]_{\geq 0}$ is also cyclic (since $\sh{S}$ is generated in degree 1) and so is a factor of $\sh{S}$, in a unique way up to scalar. This induces a map 
\[
\psi_n:  Z_n  \to Z_{n-1} \;\;\; \text{defined by} \;\;\; 
\sh{M}  \mapsto \sh{M}[1]_{\geq 0}.
\]
It is clear that $\psi_n$ and $\phi_n$ map relevant components to relevant components.

\begin{lemma}\label{lem-faces}
Let $f_n: {}_{\ell}Z_n \to X$ be the map that sends a module $\sh{M}$ to its support.   The diagrams
\beq\label{d1}
\xymatrix{
& {}_{\ell}Z_n \ar[dd]_{\phi} \ar[ld]_{f} \\
X & \\
&  {}_{\ell}Z_{n-1} \ar[lu]^{f} 
}
\eeq
and
\beq\label{d2}
\xymatrix{
{}_0Z_n \ar[d]_{\psi} \ar[rr]^{f = \phi^n} &&  X \ar[d]^{\sigma} \\
{}_0Z_{n-1} \ar[rr]_{f = \phi^{n-1}} && X
}
\eeq
commute.
\end{lemma}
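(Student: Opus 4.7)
My plan is to verify each diagram by unwinding the definitions, handling the two cases by slightly different arguments depending on how the operation in question interacts with the $\sigma$-twist inherent in the graded $(\sO_X,\sigma)$-bimodule algebra structure.

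For diagram~\eqref{d1} the argument should be essentially formal. The truncation $\phi:\sh{M}\mapsto \sh{M}/\sh{M}_n$ simply discards the top graded piece: if $\sh{M}\cong P_\ell\oplus\cdots\oplus P_n$ represents a $\operatorname{Spec}(C)$-point of ${}_\ell Z_n$ with common support the $C$-point $x:\operatorname{Spec}(C)\to X$, then $\phi(\sh{M})\cong P_\ell\oplus\cdots\oplus P_{n-1}$ has the same common support $x$, so $f\circ\phi = f$.

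For diagram~\eqref{d2} the plan is to pass to global sections. Since $\sS$ is generated in degree one and $\{(\sS_n)_{\sigma^n}\}$ is an ample sequence by Theorem~\ref{thm-RS}, the module $\sh{M}$ corresponds via $H^0(X,-)$ to a truncated $S$-point module $M$. I would let $m_0\in M_0$ be a generator supported at the closed point $x\in X$, so that $m_0\cdot a = a(x)\,m_1$ for $a\in S_1$ in a chosen basis $\{m_i\}$. The shifted module $\psi(\sh{M})$ then corresponds to $M[1]_{\geq 0}$, generated in degree zero by $m_1$, and its support is determined by the annihilator of $m_1$ in $S_1$. Applying the multiplication formula from Section~\ref{BACKGROUND}, $ab = \mu(a\otimes \sigma^{*}(b))$ for $a\in S_1$, and choosing $a$ with $a(x)=1$, I would compute
\[
m_1\cdot b \;=\; (m_0\cdot a)\cdot b \;=\; m_0\cdot (ab) \;=\; a(x)\cdot b(\sigma(x))\,m_2 \;=\; b(\sigma(x))\,m_2
\]
for $b\in S_1$. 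Hence $m_1\cdot b=0$ iff $b(\sigma(x))=0$, showing that $\psi(\sh{M})$ is supported at $\sigma(x)$ and therefore $f\circ\psi = \sigma\circ f$. The scheme-valued/families version follows routinely by flat base change.

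The main subtlety I expect is tracking the direction of the $\sigma^*$ pullback precisely: the difference between $\sigma$ and $\sigma^{-1}$ on the right-hand side of the diagram is purely a matter of convention, but it must match the convention for the multiplication on $H^0(X,\sS)$ fixed in Section~\ref{BACKGROUND}. Once that is pinned down, the verification reduces to the one-line calculation above.
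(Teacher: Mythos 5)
Your treatment of \eqref{d1} coincides with the paper's: a further graded quotient of a shifted truncated point module has the same support, and nothing more is needed there.

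For \eqref{d2}, however, the route through the section algebra has a genuine gap. The maps $\psi_n$ and $f_n$ are defined on the point scheme ${}_0Z_n$ of the \emph{bimodule algebra} $\sS$, and the support of a point is read off from the graded $\sO_X$-module structure of the quotient $\sh{M}$, not from annihilators in $S$. Your opening step --- that $\sh{M}$ ``corresponds via $H^0(X,-)$ to a truncated $S$-point module'' --- is exactly the nontrivial comparison carried out later, in Proposition~\ref{prop:immersion of point schemes} and Theorem~\ref{thm:ptschemes}; it requires the shift $\ell$ to be at least a \pp, whereas here $\ell=0$, and the paper is explicit that ${}_{\ell}Y_n$ and ${}_{\ell}Z_n$ need not be isomorphic in general. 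Even granting the passage to $S$, recovering the support of $\sh{M}[1]_{\geq 0}$ from $\Ann_{S_1}(m_1)$ presumes that the global sections of $\sS_1=\sI_P\sL$ separate points of $X$, which does not follow from generation in degree one (and the ``evaluation'' $a(x)$ is not defined at points of the orbit of $P$). So your calculation correctly exhibits the $\sigma$-twist but does not by itself identify $f(\psi(z))$.

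The paper's proof stays inside $\rGr \sS$ and is one line: by \cite[Lemma~5.5]{KRS}, the shift of a graded module over a graded $(\sO_X,\sigma)$-bimodule algebra satisfies $\sh{M}[1]_i\cong(\sh{M}_{i+1})^{\sigma^{-1}}$ as $\sO_X$-modules, and $(\kk_x)^{\sigma^{-1}}\cong\kk_{\sigma(x)}$, so the support of $\psi(\sh{M})=\sh{M}[1]_{\geq 0}$ is $\sigma(x)$. This twist formula is the statement your computation is implicitly using; replacing the appeal to $S$ by it closes the gap.
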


\begin{proof}
It is clear by construction that if $\sh{M}$ is a shifted truncated point module and $\sh{M}'$ is a further factor of $\sh{M}$, then $\sh{M}$ and $\sh{M}'$ have the same support.  Thus \eqref{d1} commutes.

Let $\sh{M}$ be a truncated point module corresponding to a point $z \in {}_0Z_n$.  
Let $x := f(z)$.  
By \cite[Lemma~5.5]{KRS}, we have
\[ \sh{M}[1]_n \cong (\sh{M}_{n+1})^{\sigma^{-1}} \cong (\kk_x)^{\sigma^{-1}} \cong \kk_{\sigma(x)}.\]
Thus $f \psi(z) = \sigma f(z)$, as claimed, and \eqref{d2} commutes.
\end{proof}

Recall that $\sS_n = \sI_n \sL_n$.  

\begin{proposition}\label{prop-Z}
For $n\geq 0$, let  $X_n$ be the blowup of $X$ at  $\sh{I}_n$, and let $\alpha_n, \beta_n:  X_n \to X_{n-1}$ be as in Corollary~\ref{cor-X-maps}.    

Then for all $n > \ell \geq 0$ there are isomorphisms $j_n:  X_n \to {}_{\ell}Z_n^o \subseteq {}_{\ell}Z_n$ so that the diagrams
\[ \xymatrix{
X_n \ar[d]^{\alpha_n} \ar[r]^{j_n} & {}_{\ell}Z_n \ar[d]^{\phi_n}  \\
X_{n-1} \ar[r]_{j_{n-1}} & {}_{\ell}Z_{n-1}
}
 \quad\quad \mbox{and} \quad\quad
 \xymatrix{
X_n \ar[d]_{\beta_n} \ar[r]^{j_n} &
{}_0 Z_n  \ar[d]_{\psi_n} \\
X_{n-1} \ar[r]_{j_{n-1}} & {}_0 Z_{n-1}
}
\]
commute.  
\end{proposition}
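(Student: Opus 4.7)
The plan is to construct $j_n$ directly using the blowup of $X$ at $\sI_n$, then show it is an isomorphism onto the relevant component by building an inverse from the top-degree data, and finally check compatibility with $\alpha_n$ and $\beta_n$.

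\textbf{Construction and birationality.} The key observation is that $a_n^{-1}\sI_k\cdot \sO_{X_n}$ is invertible on $X_n$ for every $0\leq k\leq n$: by Corollary~\ref{cor-X-maps}, $a_n = a_k\circ\alpha^{n-k}$, so $a_n^{-1}\sI_k\cdot \sO_{X_n}$ is the pullback under $\alpha^{n-k}$ of the invertible ideal $a_k^{-1}\sI_k\cdot \sO_{X_k}$ on $X_k$. Hence for any morphism $g: T\to X_n$, writing $f := a_n\circ g$, each $f^{-1}\sI_i\cdot \sO_T$ is invertible on $T$ for $\ell\leq i\leq n$. With $\iota_f: T\hookrightarrow X\times T$ denoting the graph of $f$, define a $T$-family of embedded $\ell$-shifted truncated point modules of length $n-\ell+1$ by
\begin{displaymath}
\sM := \bigoplus_{i=\ell}^{n} (\iota_f)_*\sN_i, \qquad \sN_i := (f^{-1}\sI_i\cdot \sO_T)\otimes_{\sO_T} f^*\sL_i,
\end{displaymath}
with the surjection from $\sS_{\geq\ell}\otimes \sO_T$ induced by the surjections $f^*\sI_i\twoheadrightarrow f^{-1}\sI_i\cdot \sO_T$, and the $\sS$-module structure inherited from $\sI_i\cdot\sI_P^{\sigma^i} = \sI_{i+1}$ and $\sL_i\otimes\sL^{\sigma^i}=\sL_{i+1}$. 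This defines $j_n: X_n\to {}_\ell Z_n$, and by construction $f_n\circ j_n = a_n$. Over $U := X\ssm\Cosupp\sI_n$ each $\sI_i$ is trivial, so the construction reduces to the canonical truncated point module, and both $a_n$ and $f_n$ restrict to isomorphisms onto $U$; hence $j_n$ is an isomorphism over $a_n^{-1}(U)$, which is dense in the irreducible variety $X_n$. Thus $j_n$ is birational with image contained in the closure ${}_\ell Z_n^o$ of $f_n^{-1}(U)$.

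\textbf{Isomorphism and compatibility.} To construct an inverse, I will use the top degree: for a $T$-family $\sM$ in ${}_\ell Z_n^o$, the surjection $\sS_n\otimes\sO_T\twoheadrightarrow \sM_n$ adjoints to a surjection $f^*\sI_n\twoheadrightarrow Q_n$ onto an invertible sheaf $Q_n$ on $T$; on the relevant component this quotient coincides with $f^{-1}\sI_n\cdot \sO_T$, so the latter is invertible and the universal property of the blowup $X_n$ yields a unique lift $\mu_n(\sM): T\to X_n$. Checking $j_n$ and $\mu_n$ are mutually inverse is then a direct verification using the uniqueness in the blowup universal property. The first diagram commutes automatically from $a_n = a_{n-1}\circ\alpha_n$: the family in degrees $\ell,\dots,n-1$ produced from $g$ coincides with that produced from $\alpha_n\circ g$. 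The second diagram uses $\sigma\circ a_n = a_{n-1}\circ\beta_n$ (Corollary~\ref{cor-X-maps}): the supports of $j_{n-1}(\beta_n(y))$ and $\psi_n(j_n(y)) = j_n(y)[1]_{\geq 0}$ both equal $\sigma(a_n(y))$ by Lemma~\ref{lem-faces}, and the full module identity reduces to a direct bookkeeping using $\sI_{i+1} = \sI_P\cdot\sI_i^\sigma$, $\sL_{i+1}=\sL\otimes\sL_i^\sigma$, and the $\sigma^{-1}$-twist built into the shift operation.

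\textbf{Main obstacle.} The hardest step is verifying, scheme-theoretically, that the top-degree invertible quotient $f^*\sI_n\twoheadrightarrow Q_n$ produced by a $T$-family in ${}_\ell Z_n^o$ really agrees with the ideal image $f^{-1}\sI_n\cdot \sO_T$, not merely generically. Both are invertible quotients of $f^*\sI_n$, and on the dense open where $f$ avoids $\Cosupp\sI_n$ the agreement is automatic; the work is to show this factorization extends to all of $T$ under the relevant-component hypothesis---effectively, that the surjection to $Q_n$ factors through the ideal-theoretic image on the nose, by a continuity/density argument built into the very definition of ${}_\ell Z_n^o$. This is what pins down the scheme structure of ${}_\ell Z_n^o$ as exactly that of the blowup $X_n$.
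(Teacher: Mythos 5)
Your proposal is correct in outline but takes a genuinely different route from the paper. The paper never invokes the universal property of blowing up: it embeds both $X_n$ (via Lemma~\ref{lem-ideals}, applied to each $\sI_i$) and ${}_{\ell}Z_n$ (via $\sJ \mapsto (\sJ_\ell,\dots,\sJ_n)$) as closed subschemes of one ambient product $X\times W_1\times\cdots\times W_n$ of colength-one-ideal schemes, checks $\im(r_n)\subseteq\im(\delta_n)$ using the explicit formula of Lemma~\ref{lem-blowupmaps}(c), defines $j_n=\delta_n^{-1}r_n$, and identifies the images by taking closures of the common locus over $U=X\ssm\Cosupp\sI_n$; the second square is verified by comparing the formula $\sF_i=(\sJ_{i+1}:\sI_1)^{\sigma^{-1}}\cap\sI_i$ with $\psi(\sJ)_i=(\Ann_{\sS}(\sM_1))_i$. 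Your functor-of-points construction of $j_n$ and the inverse built from the top-degree quotient via the universal property are a more intrinsic, moduli-theoretic packaging of the same content; what the paper's ambient-embedding trick buys is precisely that it never has to confront your ``main obstacle,'' since the comparison of the moduli-theoretic quotient with the ideal-theoretic image is absorbed into the containment of closed subschemes.

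Two points of care. First, your density argument for identifying $Q_n$ with $f^{-1}\sI_n\cdot\sO_T$ cannot be run for an arbitrary $T$ (e.g.\ a fat point supported over $\Cosupp\sI_n$ has no dense open where the two agree); it should be run once for the universal family over $T={}_{\ell}Z_n^o$ itself, which is integral because it is the scheme-theoretic closure of the reduced generic fibre. There both $Q_n$ and $f^{-1}\sI_n\cdot\sO_T$ are quotients of $f^*\sI_n$ that are torsion-free (the first is invertible, the second an ideal sheaf on an integral scheme) and agree on a dense open, so the two kernels coincide; the general case then follows by pulling back. Second, for the commutativity of the $\beta_n$--$\psi_n$ square you can avoid the ``direct bookkeeping'' entirely: both composites are morphisms from the integral scheme $X_n$ to the separated scheme ${}_0Z_{n-1}$, and by Lemma~\ref{lem-faces} and Corollary~\ref{cor-X-maps} they agree over the dense open set where $a_n$ is an isomorphism and $\sigma a_n$ avoids $\Cosupp\sI_{n-1}$ (where a truncated point module is determined by its support), hence they agree everywhere. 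With these repairs your argument is complete.
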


\begin{proof}
We will do the case that $\ell=0$; the general case is similar.  Let $Z_n := {}_{0} Z_n$.  For $0 \leq i \leq n$, let $W_i = \Proj \shSym_X \sI_i$ be the scheme parameterizing colength 1 ideals inside $\sh{I}_i$, and let $c_i:  X_i \to W_i$ be the map from Lemma~\ref{lem-ideals}.  
Let 
\[ r_n: X_n \to X \times W_1 \times \cdots \times W_n\]
be the composition
\[ \xymatrix{
X_n \ar[rrr]^(.3){\alpha^n \times \alpha^{n-1} \times \cdots \times 1} &&&
X \times X_1 \times \cdots \times X_n 
\ar[rr]^{c_0 \times \cdots \times c_n} && X \times W_1 \cdots \times W_n. }\]
Since this is the composition of the graph of a morphism with a closed immersion, it is also a closed immersion and is an isomorphism onto its image.

Now, a point in $Z_n$ corresponds to an ideal $\sh{J} \subset \sh{S}$ so that the factor is a truncated point module of length $n+1$, and there is thus a canonical closed immersion
$\delta_n:  Z_n \to X \times W_1 \times \cdots W_n.$
The map $\delta_n$ sends a graded  right ideal $\sh{J}$ of $\sh{S}$  to the tuple $(\sh{J}_0, \sh{J}_1, \ldots, \sh{J}_n)$.  

Conversely, a point $(\sh{J}_0, \ldots, \sh{J}_n) \in X \times W_1 \times \cdots \times W_n$ is in $\im(\delta)$ if and only if we have $\sh{J}_i \sh{S}_j^{\sigma^i} \subseteq \sh{J}_{i+j}$ for all $i+j \leq n$.  It follows from Lemma~\ref{lem-blowupmaps}(c) that $\im(r_n) \subseteq \im(\delta_n)$.  Since $r_n$ and $\delta_n$ are closed immersions and $X_n$ is reduced, we may define
$
j_n = \delta_n^{-1} r_n:  X_n \longrightarrow Z_n.
$

Let $U := X \smallsetminus \Cosupp \sh{I}_n$. Then $f^{-1}_n$ and $a^{-1}_n$ are defined on $U$, and the diagram
\[ \xymatrix{
& X \times W_1 \times \cdots \times W_n & \\
X_n \ar[rr]_{j_n} \ar[ru]^{r_n} && Z_n \ar[lu]_{\delta_n} \\
& U \ar[lu]^{a_n^{-1}} \ar[ru]_{f_n^{-1}} & 
}\]
commutes.  
Since $r_n$ and $\delta_n$ are closed, 
\[ r_n(X_n) = r_n(\bbar{a_n^{-1}(U)}) = \bbar{r_na_n^{-1}(U)} = \bbar{\delta_n f_n^{-1}(U)}
= \delta_n(\bbar{f_n^{-1}(U)}) = \delta_n(Z_n^o).\]  
Therefore, $j_n$ is an isomorphism to $Z_n^o$.

Let $q:  X \times W_1 \times \cdots \times W_n \to X \times W_1 \times \cdots \times W_{n-1}$ be projection onto the first $n$ factors.  
Consider the diagram
\[
\xymatrix{
X_n \ar[r]^(0.3){r_n} \ar[d]_{\alpha_n} & X\times W_1 \times \cdots \times W_n \ar[d]_{q} &
Z_n \ar[l]_(0.3){\delta_n} \ar[d]_{\phi_n} \\
X_{n-1} \ar[r]_(0.3){r_{n-1}}  & X\times W_1 \times \cdots \times W_{n-1}  &
Z_{n-1} \ar[l]^(0.3){\delta_{n-1}}  .
} \]
From the definitions of $r_n$ and $\delta_n$ we see that this diagram commutes; since $j_n = \delta_n^{-1} r_n$, the diagram
\[
\xymatrix{
X_n \ar[r]^{j_n} \ar[d]_{\alpha_n} & Z_n \ar[d]^{\phi_n} \\
X_{n-1} \ar[r]_{j_{n-1}} & Z_{n-1}
}\]
commutes.

Let $X'_n := \im(r_n) \subset X \times W_1 \times \cdots \times W_n$, and let $\beta'_n:  X'_n \to X'_{n-1}$ be the map induced from $\beta_n$.  The proof of Corollary~\ref{cor-X-maps} shows that if
$\bigl(\sJ_0, \sJ_1, \ldots, \sJ_n\bigr) \in X'_n,$
then its image under $\beta'_n$ is
\beq \label{ddag}
 \beta'_n\Bigr( \bigl(\sJ_0, \sJ_1, \ldots, \sJ_n\bigr)  \Bigl) = 
 (\sF_0, \ldots, \sF_{n-1}) \in X'_{n-1},
 \eeq
 where $\sF_i := (\sJ_{i+1}: \sI_1)^{\sigma^{-1}} \cap \sI_i$.

 Now let $\sJ$ be the ideal defining a truncated point module of length $n+1$.  By abuse of notation, we think of $\sJ$ as a point in $Z_n$.  Let $\sM:= \sS/\sJ$.  Then $\psi(\sJ)_i = (\Ann_{\sS}(\sM_1))_i$, for $0 \leq i \leq n-1$.
 Thus
 $\sI_1 ( \psi(\sJ)_i)^{\sigma} \subseteq \sJ_{i+1}$
 or 
 $\psi(\sJ)_i \subseteq \bigl( \sJ_{i+1}: \sI_1 \bigr)^{\sigma^{-1}} \cap \sI_i.$
 If $\sJ \in \im(j_n)$, then we have equality by the computation in \eqref{ddag}.  Thus the diagram
 \[ 
 \xymatrix{
 X_n \ar[r]_{r_n} \ar[d]_{\beta_n} \ar@/^10pt/[rr]^{j_n} & X_n' \ar[r]_{\delta_n^{-1}} \ar[d]_{\beta'_n} & Z_n \ar[d]^{\psi_n} \\
 X_{n-1} \ar[r]^{r_{n-1}} \ar@/_10pt/[rr]_{j_{n-1}} & X_{n-1}' \ar[r]^{\delta_{n-1}^{-1}} & Z_{n-1}
 }\]
 commutes.
\end{proof}

To end this section, we construct stacks ${}_{\ell}\Zt$ and ${}_{\ell}\Yt$ that are fine moduli spaces for (shifted) embedded point modules and give some of their properties.  
 A version of the following result was known long ago to M. Artin; however, it  does it seem to have appeared in the literature.

\begin{theorem}\label{fine moduli}
Fix $\ell \in \NN$.  
For $n \geq \ell$, let $X_n$ be the  blowup of $X$ at $\sh{I}_n$.
Let
${}_{\ell}Y_n$ be the moduli space of  $\ell$-shifted length $(n - \ell +1)$  point modules over $S$.
Let ${}_{\ell}Z_n$ be the moduli space of  $\ell$-shifted length $(n - \ell +1)$  point modules over $\sh{S}$.
Define the morphisms
$\chi_n:  {}_{\ell}Y_n   \to {}_{\ell}Y_{n-1}$, $\phi_n:  {}_{\ell}Z_n \to {}_{\ell}Z_{n-1}$, and $\alpha_n:  X_n \to X_{n-1}$ as above.
 Let
\[ {}_{\ell}\Zt := \varprojlim_{\phi_n} {}_{\ell}Z_n, \;\;\;\;\;\; \;\;\;\; 
{}_{\ell}\Yt := \varprojlim_{\chi_n} {}_{\ell}Y_n, \;\;\; \text{and} \;\;\; 
\Xt := \varprojlim_{\alpha_n} X_n.\]
Then the stack ${}_{\ell}\Yt $ is a sheaf in the fpqc topology and  is a fine moduli space for $\ell$-shifted embedded point modules over $S$.  
The stack  $ {}_{\ell} \Zt$ is noetherian fpqc-algebraic and is a fine moduli space for $\ell$-shifted embedded point modules over $\sS$.
The  relevant component of ${}_{\ell} \Zt$ is isomorphic to $\Xt$.
\end{theorem}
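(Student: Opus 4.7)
My plan is organized around the three assertions of the theorem. The central one, that each of ${}_\ell\Yt$ and ${}_\ell\Zt$ is a fine moduli space for $\ell$-shifted embedded point modules, I would prove by establishing a natural bijection between $C$-families of such point modules and compatible inverse systems of $C$-families of $\ell$-shifted truncated embedded point modules of lengths $n - \ell + 1$ for $n \geq \ell$. The forward direction is truncation, $M \mapsto (M / M_{>n})_{n \geq \ell}$. The reverse is the inverse limit: given a compatible system $\{M^{(n)}\}$, the module $\varprojlim M^{(n)}$ has graded pieces $M_i = M^{(n)}_i$ for any $n \geq i$, so the rank-$1$ projective condition and the generation-in-degree-$\ell$ condition persist to the limit, and the multiplication maps assemble consistently. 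This identifies the moduli functor with $\varprojlim h_{{}_\ell Y_n} = h_{{}_\ell \Yt}$, and likewise for $\sS$.

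Since each ${}_\ell Y_n$ is a scheme, hence a sheaf in the fpqc topology, and since an inverse limit of sheaves taken in the category of presheaves is automatically a sheaf (as already invoked in Proposition \ref{prop-algspace}), the stack ${}_\ell \Yt$ is a sheaf in the fpqc topology.

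For the third assertion, I apply Proposition \ref{prop-Z}, which supplies compatible isomorphisms $j_n: X_n \to {}_\ell Z_n^o$ that intertwine $\alpha_n$ with $\phi_n$. Passing to inverse limits yields ${}_\ell\Zt^o \cong \varprojlim_{\alpha_n} X_n = \Xt$. The fpqc-algebraic and noetherian properties of ${}_\ell\Zt$ (the second assertion) then follow by combining Corollary \ref{cor-Xt} (which establishes these for $\Xt$) with a separate treatment of the non-relevant components. These arise from truncated point modules whose support lies on the $\sigma$-orbit of $\Supp P$ and where a multiplication map degenerates; the point is that past a sufficiently large $n$, the transition morphisms $\phi_n$ become isomorphisms on these components, so they contribute only finitely many projective schemes to $\varprojlim {}_\ell Z_n$. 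Appending these to the fpqc atlas for $\Xt$ built in Proposition \ref{prop-algspace} yields an fpqc atlas for ${}_\ell\Zt$ from a noetherian scheme.

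The main obstacle will be the analysis of the non-relevant components, in particular verifying precisely that they stabilize under the transition maps and contribute well-behaved projective schemes to the limit. If this direct approach proves delicate, an alternative is to verify the hypotheses (i)--(v) of Proposition \ref{prop-algspace} directly for the inverse system $\{{}_\ell Z_n\}$ starting from ${}_\ell Z_\ell = X$, treating each ${}_\ell Z_n$ (not just its relevant component) as a suitable generalized blow-up and using critical density of the $\sigma$-orbits of $\Supp P$ to control the local geometry.
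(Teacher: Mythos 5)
Your treatment of the first and third assertions is essentially the paper's. The identification of the moduli functor of embedded $\ell$-shifted point modules with $\varprojlim h_{{}_{\ell}Y_n}$ via truncation and re-assembly is exactly the step the paper dismisses as ``formal'' (the paper phrases the sheaf property via descent for quotients of $S_{\geq \ell}$, but the content is the same), and the relevant-component claim is obtained, as you say, by passing Proposition~\ref{prop-Z} to the limit.

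The gap is in your primary route to the second assertion. The non-relevant locus of the system $\{{}_{\ell}Z_n\}$ does \emph{not} stabilize to finitely many projective schemes: extra truncated point modules (beyond those in ${}_{\ell}Z_n^o$) occur over every point of $\bigcup_n \Cosupp \sI_n$, which is a countably infinite, critically dense subset of $X$, and new such behavior appears over new orbit points at every stage $n$. While the fibre over each \emph{fixed} exceptional point does stabilize (this is hypothesis (iii) of Proposition~\ref{prop-algspace}), in the limit the non-relevant part lies over a countable dense set, so it cannot be packaged as a noetherian scheme to be ``appended'' to an atlas for $\Xt$; a naive disjoint union destroys both quasi-compactness and noetherianity. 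This is precisely the difficulty that the hypersurface construction in the proof of Proposition~\ref{prop-algspace} (using uncountability of $\kk$ and critical density to build finitely many affine charts $U_i$ covering \emph{all} components, with each $C_i$ noetherian by Lemma~\ref{lem-proj}) is designed to overcome. Your fallback is the correct argument and is what the paper does: one verifies (i)--(v) directly for the full system $\{{}_{\ell}Z_n\}$ over $X$ --- (i), (ii) from critical density of $\bigcup_n \Cosupp\sI_n$; (iii) because each point of $P$ has infinite orbit, so that for $x$ fixed and $m\gg 0$ one has $x\notin\sigma^{-n}(P)$ for $n\geq m$, whence $\sJ':=\sJ_{\leq m}\cdot\sS$ is the unique lift of a point of $Z_m$ over $x$; and (iv), (v) from the closed embedding of ${}_{\ell}Z_n$ into $W_\ell\times_X\cdots\times_X W_n$ as in Proposition~\ref{prop-Z}. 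One small correction to that fallback: ${}_{\ell}Z_\ell$ is the scheme of colength-one subsheaves of $\sS_\ell$, i.e.\ $\Proj\shSym_X\sI_\ell$, which is not $X$ when $\ell\geq 1$; the system should be regarded as lying over $X$ via the support maps $f_n$.
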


\begin{proof}
We suppress the subscript $\ell$ in the proof.  

For $n \geq \ell$, let $F_n$ be the moduli functor for  truncated $\ell$-shifted  point modules over $S$, so $ Y_n \cong F_n$. 
 Define a contravariant functor 
\[ F:  \text{Affine schemes} \longrightarrow \text{Sets} \]
\[ \Spec C \mapsto \big\{ \text{Embedded $\ell$-shifted $C$-point modules over $S$} \big\}.\]
By descent theory, $F$ is a sheaf in the fpqc topology.  More precisely, recall that quasicoherent sheaves form a stack in the fpqc 
topology (see \cite[Section~4.2.2]{Vistoli}); consequently the (graded) quotients of $S_{\geq \ell}$ form a sheaf of sets in the 
fpqc topology.  Moreover, as in the first paragraph of Section~4.2.3 of \cite{Vistoli}, those quotients of $S_{\geq \ell}$ that are $S$-module quotients form
 a subsheaf in the fpqc topology; this subsheaf is $F$.  
It is formal that  $F$ is isomorphic to the functor $h_{\Yt}$.

%

Likewise, $ \Zt$ parameterizes $\ell$-shifted point modules over $\sh{S}$.  
We show that  (i)--(v) of Proposition~\ref{prop-algspace} apply to $\Zt$.
We have $\sS_n = \sI_n \sL_n$; let $P_n \subset X$ be the subscheme defined by $\sI_n$.
Consider the maps
$ \xymatrix{Z_n \ar[r]^{\phi_n} \ar@/_/[rr]_{f_n} & Z_{n-1} \ar[r]^{f_{n-1}} & X}$
from Lemma~\ref{lem-faces}.  
Now, $f_n^{-1}$ is defined away from $P_n$, and $\cup_n P_n$ is a countable critically dense set.  
Thus (i), (ii) hold.

Let $x \in \bigcup P_n$.  As the points in $P$ have infinite orbits, there is some $m \in \NN$ so that $x \not \in \sigma^{-n}(P)$ for all $n \geq m$.  
 Let $z_m \in Z_m$ with $f_m(z_m) = x$, corresponding to a right ideal $\sJ \subseteq \sS_{\geq \ell}$ with $S_{\geq \ell}/\sJ \cong \bigoplus_{j = \ell}^m \sO_x$.  
Let $\sJ' := \sJ_{\leq m} \cdot \sS$.
For any $j \geq 0$ we have $(\sS_j^{\sigma^m})_x = (\sL_j^{\sigma^m})_x$, and so  $\sJ'$ gives the unique preimage of $z_m$ in $\Zt$.   
A similar uniqueness holds  upon base extension, so the scheme-theoretic preimage of $z_m$ in $\Zt$ is  a $\kk$-point, and $\Phi_m^{-1}$ is defined and is a local isomorphism at $z_m$.  

For $j \geq \ell$, let $W_j := \Proj \shSym_X \sI_n$.  
As in the proof of Proposition~\ref{prop-Z}, we may regard $Z_n$ as a closed subscheme of $W_\ell \times \cdots \times W_n$, and (iv) and (v)  follow from this and the fact that the orbits of points in $P$ are infinite. 
By Proposition~\ref{prop-algspace}, then,  $\Zt$ is a noetherian fpqc-algebraic stack.

Consider the morphisms $j_n: X_n \stackrel{\cong}{\to} Z_n^o \subseteq Z_n$ from Proposition~\ref{prop-Z}.  Commutativity of the first diagram in that proposition gives an induced isomorphism $  j:  \Xt \to  \Zt^o \subseteq \Zt$.  
\end{proof}

\section{Comparing moduli of points}\label{COMPARE}

In this section we prove that ${}_{\ell}\Yt$ is also  noetherian fpqc-algebraic, and that, 
at least for sufficiently large $\ell$, the stacks ${}_{\ell} \Zt$ and ${}_{\ell}\Yt$ are isomorphic.

In the following pages, we will always use the following notation.  We write a commutative $\kk$-algebra $C$ as $p: \kk \to C$, to indicate the structure map explicitly.  
We write $X_C := X \otimes_\kk \Spec C$.   
We abuse notation and let the projection  map 
$1 \otimes p:  X_C \to X$ 
also be denoted by $p$.
We let $q: X_C \to \Spec C$ be projection on the second factor.

Suppose that $p:  \kk \to C$ is a commutative $\kk$-algebra, and $y:  \Spec C \to X$ is a $C$-point of $X$.  Then $y$ determines a section of $q$, which we also call $y$.  This is a morphism $y:  \Spec C \to X_C$.  We define $\sI_y \subseteq \sO_{X_C} $ to be the ideal sheaf of the corresponding closed subscheme of $X_C$.  We define $\sO_y := \sO_{X_C}/\sI_y$.

We use the relative regularity results from Section~\ref{BACKGROUND} to study the pullbacks of the sheaves $\sS_n$ to $X_C$.  Fix a very ample invertible sheaf $\sO_X(1)$ on $X$, which we will use to measure regularity.

\begin{lemma}\label{lem-ample}
 Let $p:  \kk \to C$ be a commutative  noetherian $\kk$-algebra.
Then $\{p^* \sS_n\}_{n \geq 0}$ is a right ample sequence on $X_C$.
\end{lemma}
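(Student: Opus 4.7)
The plan is to derive right ampleness of $\{p^*\sS_n\}$ on $X_C$ from right ampleness of $\{\sS_n\}$ on $X$ (Theorem~\ref{thm-RS}), using the relative Castelnuovo--Mumford regularity machinery developed in Section~\ref{BACKGROUND}.

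First, I would show that $\reg(\sS_n)\to -\infty$ as $n\to\infty$.  Fix an integer $m$ and, for each $i=1,\dots,\dim X$, apply right ampleness of $\{\sS_n\}$ on $X$ to the coherent sheaf $\sO_X(m-i)$: there is some $N_i$ with $H^i(X,\sS_n(m-i))=0$ for all $n\geq N_i$.  Taking $N=\max_i N_i$, the sheaf $\sS_n$ is $m$-regular for $n\geq N$, and since $m$ was arbitrary, $\reg(\sS_n)\to -\infty$.

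Second, I would apply Keeler's relative regularity bound (Proposition~\ref{prop-Dennis}) on $X_C$.  For any coherent $\sF$ on $X_C$, the sheaf $p^*\sS_n=p^*\sI_n\otimes p^*\sL_n$ is locally free away from $p^{-1}(\Cosupp \sI_n)$, and since $\Cosupp \sI_n$ is $0$-dimensional in $X$, this locus has relative dimension $0$ over $\Spec C$.  Hence the subscheme where $\sF$ and $p^*\sS_n$ both fail to be locally free has relative dimension $\leq 2$, and Proposition~\ref{prop-Dennis} gives
\[
\reg(\sF\otimes_{X_C}p^*\sS_n)\leq \reg(\sF)+\reg(p^*\sS_n)+D.
\]
By Lemma~\ref{lem-CM1}, $\reg(p^*\sS_n)=\reg(\sS_n)$, so the first step yields $\reg(\sF\otimes p^*\sS_n)\to -\infty$; in particular $\sF\otimes p^*\sS_n$ is $0$-regular for $n\gg 0$.

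Finally, both ampleness conditions follow from $0$-regularity via Theorem~\ref{thm-Mumford}.  Global generation is part~(2) of that theorem applied with both indices equal to $0$, while for any fixed $i\geq 1$, part~(1) shows that $\sF\otimes p^*\sS_n$ is $i$-regular, which by definition gives $R^iq_*(\sF\otimes p^*\sS_n)=0$.  The main technical point is verifying the relative-dimension hypothesis of Proposition~\ref{prop-Dennis}, but this is essentially automatic since $P$, and hence every $\Cosupp \sI_n$, is $0$-dimensional in $X$; once this is in hand, the proof is just a clean combination of the three ingredients above.
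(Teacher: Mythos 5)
Your proposal is correct and follows essentially the same route as the paper: establish $\reg(\sS_n)\to-\infty$, transfer this to $X_C$ via Lemma~\ref{lem-CM1}, apply Proposition~\ref{prop-Dennis} (checking the relative-dimension hypothesis exactly as you do), and conclude with Theorem~\ref{thm-Mumford}. The only difference is that the paper simply cites \cite[Corollary~3.14]{RS-0} for $\lim_n\reg(\sS_n)=-\infty$, whereas you rederive it from the right ampleness in Theorem~\ref{thm-RS}; your derivation is valid.
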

\begin{proof}
 Let $\sF$ be a coherent sheaf on $X_C$.  
By \cite[Corollary~3.14]{RS-0}, $\lim_{n \to \infty} \reg(\sS_n) = -\infty$.  
Thus $\lim_{n \to \infty} \reg(p^* \sS_n) = -\infty$ by Lemma~\ref{lem-CM1}. 
Since each $ \sS_n$ is invertible away from a dimension 0 set, $p^* \sS_n$ is invertible away from a locus of relative dimension 0.  By Proposition~\ref{prop-Dennis} $\sF \otimes_{X_C} p^* \sS_n$ is 0-regular for $ n \gg 0$.
Theorem~\ref{thm-Mumford} shows that $(1)$, $(2)$ of Definition~\ref{def-ample} apply.
\end{proof}

We  now prove a uniform regularity result for certain subsheaves of a pullback of  some $\sS_n$.  
\begin{lemma}\label{lem-posparam-correct}
 There exists $m \geq 0$ so that the following holds for any $n \geq m$:
for any  commutative noetherian $\kk$-algebra $p:  \kk \to C$, 
for any $C$-point $y$ of $X$, 
and for any coherent sheaf $\sK$ on $X_C$ so that  $\sI_y p^* \sS_n \subseteq \sK \subseteq p^* \sS_n$, 
then $\sK$ is 0-regular.  In particular, $\sK$ is globally generated and $R^1q_* \sK= 0$.
\end{lemma}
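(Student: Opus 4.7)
The plan is to reduce the claim to a uniform regularity bound for $\sI_y \otimes p^* \sS_n$, which is then handled by Proposition~\ref{prop-Dennis}.  First observe that the quotient $\sK/(\sI_y p^* \sS_n)$ is annihilated by $\sI_y$, hence supported on the section $y$, which has relative dimension zero over $\operatorname{Spec} C$.  Consequently $R^i q_*$ of any twist of this quotient vanishes for $i\geq 1$, so its regularity is $-\infty$.  The short exact sequence
\[ 0\to \sI_y p^*\sS_n \to \sK \to \sK/(\sI_y p^*\sS_n)\to 0 \]
and the standard inequality $\reg(B)\leq \max(\reg(A),\reg(C))$ for a sequence $0\to A\to B\to C\to 0$ then give $\reg(\sK)\leq \reg(\sI_y p^*\sS_n)$, so it suffices to treat the case $\sK = \sI_y p^*\sS_n$.

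Next, tensoring $0\to \sI_y\to \sO_{X_C}\to \sO_y\to 0$ with $p^*\sS_n$ produces an exact sequence
\[ 0\to \sTor_1^{\sO_{X_C}}(\sO_y, p^*\sS_n)\to \sI_y\otimes p^*\sS_n\to \sI_y p^*\sS_n\to 0, \]
whose left-hand term is again supported on $y$ and hence has regularity $-\infty$; so $\reg(\sI_y p^*\sS_n)\leq \reg(\sI_y\otimes p^*\sS_n)$.  Now apply Proposition~\ref{prop-Dennis}: the sheaf $\sI_y$ fails to be locally free only along $y$ (relative dimension $0$), while $p^*\sS_n$ fails to be locally free only along $p^{-1}(\operatorname{Cosupp}\sI_n)$ (also relative dimension $0$), so the hypothesis is satisfied and
\[ \reg(\sI_y\otimes p^*\sS_n)\leq \reg(\sI_y)+\reg(p^*\sS_n)+D, \]
with $D$ depending only on $X$ and $\sO_X(1)$.

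Finally, applying the regularity inequality to $0\to\sI_y\to\sO_{X_C}\to\sO_y\to 0$, and using $\reg(\sO_y)=-\infty$ together with $\reg(\sO_{X_C})=\reg(\sO_X)=:r_0$ (Lemma~\ref{lem-CM1}), gives $\reg(\sI_y)\leq r_0$ uniformly in $y$ and $C$.  Meanwhile $\reg(p^*\sS_n)=\reg(\sS_n)\to -\infty$ by Lemma~\ref{lem-CM1} combined with the input from \cite[Corollary~3.14]{RS-0} already used in the proof of Lemma~\ref{lem-ample}.  Choosing $m$ so that $\reg(\sS_n)\leq -r_0-D$ for $n\geq m$ then delivers $\reg(\sK)\leq 0$ uniformly, and the global-generation and $R^1 q_*$-vanishing assertions follow from Theorem~\ref{thm-Mumford}.

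The main point requiring care is uniformity of the estimates across $y$ and $C$, which is precisely what the relative version Proposition~\ref{prop-Dennis} (whose constant $D$ is purely geometric) and Lemma~\ref{lem-CM1} (which passes regularity on $X_C$ to regularity on $X$) are designed to provide.  Once these are available, all remaining steps are routine regularity inequalities for short exact sequences whose extraneous terms are supported in relative dimension zero.
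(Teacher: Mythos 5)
Your overall strategy is the same as the paper's: reduce to $\sI_y\otimes p^*\sS_n$, apply the relative tensor bound of Proposition~\ref{prop-Dennis}, and use $\reg(\sS_n)\to-\infty$. Your handling of the two short exact sequences reducing $\sK$ to $\sI_y\otimes p^*\sS_n$ is fine (the paper dismisses this as an ``easy exercise''), since in both cases the extraneous term contributes only via its \emph{higher} direct images, which vanish for sheaves supported in relative dimension zero.

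There is, however, a genuine gap in your bound $\reg(\sI_y)\leq r_0$. For a short exact sequence $0\to A\to B\to C\to 0$, the regularity of the \emph{kernel} $A$ is not controlled by $\max(\reg B,\reg C+1)$ when $C$ has no higher cohomology: the $i=1$ step of the long exact sequence requires the surjectivity of $q_*B(m-1)\to q_*C(m-1)$, which is a statement about $H^0$ that regularity of $C$ says nothing about. Concretely, for $X=\PP^2$, $C=\kk$ and $y$ a point, one has $\reg(\sO_{\PP^2})=0$ and $\reg(\sO_y)=-\infty$, yet $H^1(\sI_y(-1))\cong H^0(\sO_y)\neq 0$, so $\reg(\sI_y)=1$, contradicting your claimed inequality. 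This is exactly why the paper proves $\reg(\sI_y)\leq r+1$ (not $r$) and why it needs Lemma~\ref{SL1}: the surjectivity of $q_*\sO_{X_C}(r)\to q_*\sO_y(r)$, which follows from Theorem~\ref{thm-Mumford}(3) applied to the $0$-regular sheaf $\sO_{X_C}(r)$, is the non-routine input that kills the $i=1$ obstruction. The repair is harmless for your conclusion---replace $r_0$ by $r_0+1$ and choose $m$ with $\reg(\sS_n)\leq -r_0-1-D$---but as written the step is false and the essential ingredient (Lemma~\ref{SL1}) is missing from your argument.
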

\begin{proof}
 Let $D$ be the constant from Proposition~\ref{prop-Dennis}, and let $r:= \reg(\sO_X)$.  
By \cite[Corollary~3.14]{RS-0}, we have $\lim_{n \to \infty} \reg(\sS_n) = - \infty$. 
 Let $m$ be such that for all $n \geq m$, $\reg(\sS_n) \leq -r -D - 1$. 
  We claim this $m$ satisfies the conclusions of the lemma.

Fix a commutative  noetherian $\kk$-algebra $p: \kk \to C$ and a $C$-point $y$ of $X$.  
 We first claim that $ \reg(\sI_y) \leq r + 1$.
To see this, let $i \geq 1$ and consider the exact sequence
\[ 
 R^{i-1} q_* \sO_{X_C}(r+1-i) \stackrel{\alpha}{\to}R^{i-1}q_* \sO_y(r+1-i) \to R^iq_* \sI_y(r+1-i) \to R^iq_* \sO_{X_C}(r+1-i). 
\]
The last term vanishes, as $\sO_{X_C}$ is $(r+1)$-regular by Lemma~\ref{lem-CM1} and Theorem~\ref{thm-Mumford}(1).  
If $i \geq 2$, then $R^{i-1}q_* \sO_y(r+1-i)=0$ for dimension reasons, so $R^iq_* \sI_y(r+1-i) =0$.  
On the other hand, if $i =1$, then because $\sO_{X_C}(r)$ is 0-regular,  by Lemma~\ref{SL1} $\alpha$ is surjective.
Again, $R^iq_*\sI_y(r+1-i) = 0$.  Thus $\sI_y$ is $(r+1)$-regular as claimed.

Let $n \geq m$.  
By  Lemma~\ref{lem-CM1},  $\reg (p^* \sS_n) = \reg (\sS_n)$.  
Note that $\sI_y$ and $p^* \sS_n$ are both locally free away from a set of relative dimension 0.  
Thus the hypotheses of  Proposition~\ref{prop-Dennis} apply, and by that result we have
\[
 \reg(\sI_y \otimes_{X_C} p^* \sS_n) \leq \reg(\sI_y) + \reg (p^* \sS_n) + D \leq r+1 + D+ \reg (p^* \sS_n) 
=r+1+D + \reg (\sS_n).
\]
Our choice of $n$ ensures this is non-positive.  
In particular, $\sI_y \otimes_{X_C} p^* \sS_n$ is 0-regular.

Let $\sI_y p^* \sS_n \subseteq \sK \subseteq p^* \sS_n$.  
There is a natural map $f:  \sI_y \otimes_{X_C} p^* \sS_n \to \sK$ given by the  composition $\sI_y \otimes_{X_C} p^* \sS_n \to \sI_y \cdot p^* \sS_n \subseteq \sK$.   
The kernel and cokernel of $f$ are supported on a set of relative dimension 0, and it is an easy exercise that $\sK$ is therefore also 0-regular.  
By Theorem~\ref{thm-Mumford}, $\sK$ is globally generated and $R^1q_* \sK = 0$, as claimed.
\end{proof}

\begin{defn}\label{positivity parameter}
We call a positive integer $m$ satisfying the conclusion of 
Lemma~\ref{lem-posparam-correct}  a 
{\em \pp}.
\end{defn}

The proof of 
Lemma~\ref{lem-posparam-correct} shows that if we are willing to replace $\sL$ by a sufficiently ample invertible sheaf, we may in fact assume that $m=1$ is a positivity parameter.  (By \cite[Theorem~1.2]{Keeler2000}, the existence of a $\sigma$-ample sheaf means that any ample invertible sheaf is $\sigma$-ample.)

\begin{corollary}\label{cor-ptideal}
Let $p: \kk \to C$ be a noetherian commutative $\kk$-algebra.  
  Let $m$ be a \pp\ (Definition \ref{positivity parameter}) and let $n \geq m$.

\begin{enumerate}
\item If $\sh{J} \subset p^* \sh{S}_n$ is a sheaf on $X_C$ so that $p^* \sh{S}_n/\sh{J}$ has support on $X_C$ that is finite over $\operatorname{Spec}(C)$ and
is a rank 1 projective $C$-module, then $q_* \sh{J}$ is a $C$-submodule of $q_* p^*\sS_n =S_n \otimes C$ such that the cokernel is rank 1 projective.

\item If $\sK \subsetneqq \sJ \subset p^* \sS_n$ are sheaves on $X_C$ so that $p^* \sS_n/\sJ$ is a rank 1 projective $C$-module, then $q_* \sK \subsetneqq q_* \sJ$.

\item If $\sJ, \sJ' \subseteq p^*\sS_n$ are sheaves on $X_C$ so that $p^* \sS_n/\sJ$ and $p^*\sS_n/\sJ'$ are rank 1 projective $C$-modules, then
$q_* \sJ = q_* \sJ'$ if and only if $\sJ = \sJ'$.
\end{enumerate}
\end{corollary}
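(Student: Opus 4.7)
The corollary is essentially a cohomological consequence of Lemma~\ref{lem-posparam-correct}. My plan is to first identify, for any $\sJ \subseteq p^*\sS_n$ satisfying the rank $1$ projective quotient hypothesis, a $C$-point $y$ of $X$ with $\sI_y \, p^*\sS_n \subseteq \sJ$; then to apply Lemma~\ref{lem-posparam-correct} to $\sJ$, yielding both $R^1 q_*\sJ = 0$ and surjectivity of the adjunction map $q^* q_* \sJ \to \sJ$; and finally to unwind the consequences for each of (1)--(3).

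For the first step, I consider the ideal $\sI := \on{Ann}_{\sO_{X_C}}(p^*\sS_n/\sJ) \subseteq \sO_{X_C}$. Since $p^*\sS_n/\sJ$ has support $Z$ that is finite over $\operatorname{Spec}(C)$ and is a rank $1$ projective $C$-module, the $\sO_{X_C}$-action factors as $\sO_{X_C} \twoheadrightarrow \sO_Z \to \on{End}_C(p^*\sS_n/\sJ) \cong C$; the resulting injective $C$-algebra map from $\sO_{X_C}/\sI$ into $C$ must be an isomorphism, which identifies the closed subscheme $V(\sI) \subseteq X_C$ with a section $y$ of $q$. Then $\sI_y \, p^*\sS_n \subseteq \sJ$ by the definition of annihilator, and since $n$ is at least the positivity parameter $m$, Lemma~\ref{lem-posparam-correct} applies to $\sJ$.

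Statement (1) then follows by applying $q_*$ to the short exact sequence $0 \to \sJ \to p^*\sS_n \to p^*\sS_n/\sJ \to 0$: the vanishing $R^1 q_*\sJ = 0$ yields a short exact sequence on pushforwards; flat base change identifies $q_*p^*\sS_n$ with $S_n \otimes C$; and the finite support hypothesis identifies $q_*(p^*\sS_n/\sJ)$ with the underlying rank $1$ projective $C$-module. For (2), I use global generation: if $\sK \subsetneq \sJ$ and $q_*\sK = q_*\sJ$, then naturality of the adjunction makes the surjection $q^* q_* \sJ \twoheadrightarrow \sJ$ factor as $q^* q_* \sK \to \sK \hookrightarrow \sJ$, so its image lies in $\sK$, forcing $\sK = \sJ$, a contradiction. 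For (3), each of $\sJ$ and $\sJ'$ is globally generated, hence each equals the image in $p^*\sS_n$ of the adjunction map determined by $q_*\sJ = q_*\sJ'$, and therefore they coincide.

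The main obstacle, I expect, is the preliminary identification of the section $y$ and the verification that $\sI_y \, p^*\sS_n \subseteq \sJ$; once this bookkeeping is in hand, (1) is a standard pushforward computation and (2)--(3) are immediate consequences of global generation together with naturality of the $(q^*, q_*)$-adjunction.
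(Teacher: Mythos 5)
Your proof is correct, and for part (1) it takes a genuinely different route from the paper's. The paper never constructs the $C$-point $y$ explicitly: for the vanishing $R^1q_*\sJ=0$ it argues fiberwise, observing that $\sJ$ is flat over $C$ (as the kernel of a surjection of flat sheaves), that over each residue field $\kk(x)$ the fiber $(\sS_n\otimes\kk(x))/\sJ_x$ is a skyscraper so that Lemma~\ref{lem-posparam-correct} applies over the field $\kk(x)$ to give $H^1(X_x,\sJ_x)=0$, and then invokes cohomology and base change to conclude $R^1q_*\sJ=0$ globally. You instead apply the relative Lemma~\ref{lem-posparam-correct} directly over $C$, which forces you to first produce the section $y$ with $\sI_y\,p^*\sS_n\subseteq\sJ$; your derivation of $y$ from the annihilator of $p^*\sS_n/\sJ$ (the schematic support is finite over $\operatorname{Spec} C$ and acts faithfully and $C$-linearly on a rank~$1$ projective, hence equals a section of $q$) is sound. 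This is arguably a service to the reader: the paper's proofs of (2) and (3) also assert that ``$\sJ$ is globally generated'' by the positivity parameter, which tacitly requires exactly the containment $\sI_y\,p^*\sS_n\subseteq\sJ$ over $C$ that you verify; the paper leaves that step implicit. Your treatments of (2) and (3) via naturality of the $(q^*,q_*)$-adjunction and global generation match the paper's in substance, with (3) done slightly more directly (the paper routes (3) through (2) applied to $\sJ\cap\sJ'$, whereas you just compare the two adjunction images). The trade-off: the paper's fiberwise argument for (1) avoids constructing $y$ at that stage but needs flatness of $\sJ$ and the base-change theorem; yours front-loads the bookkeeping of $y$ and then gets everything from the single relative regularity statement.
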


\begin{proof}
 (1)  Let $x \in \Spec C$ be a closed point.  
Consider the fiber square
\[ \xymatrix{
 X_x \ar[r] \ar[d] \ar@{}[rd]|{\square} & X_C\ar[d]^{q} \\
\{x\} \ar[r] & \Spec C.}
\]
 Let $\sJ_x := \sJ|_{X_x }$. 
 Since $p^* \sS_n/\sJ$ is flat over $\Spec C$, we have $\sJ_x \subseteq p^* \sS_n |_{X_x} \cong \sS_n\otimes_{\kk} \kk(x)$.  
Further, $(\sS_n\otimes_{\kk} \kk(x))/\sJ_x \cong \sO_{x}$.  
By our choice of $n$, therefore, $H^1(X_x, \sJ_x) = 0$.

Now $\sJ$ is the kernel of a surjective morphism of flat sheaves and so is flat over $\Spec C$. 
  Since  $H^1(X_x, \sJ_x) = 0$, 
 by the theorem on cohomology and base change \cite[Theorem~III.12.11(a)]{Hartshorne}
we have $R^1 q_* \sJ \otimes_C \kk(x) = 0$.  
The $C$-module $R^1q_* \sh{J}$ thus vanishes at every closed point and is therefore 0.  

The complex 
\[ 0 \to q_*\sh{J} \to q_* p^* \sS_n \to q_*(p^* \sh{S}_n/\sh{J}) \to 0\]
is thus exact.  By assumption (1),  $q_*( p^* \sh{S}_n /\sh{J})$ is a rank 1 projective $C$-module.
Since cohomology commutes with flat base change \cite[Proposition~III.9.3]{Hartshorne}, we have  $q_* p^* \sS_n \cong H^0(X, \sS_n)\otimes_{\kk} C = S_n \otimes_{\kk} C$.

(2). 
Since $m$ is a positivity parameter, $\sJ$ is globally generated, and it follows immediately that $q_* \sK \neq q_* \sJ$.  

(3).  From (2) we have 
\[ q_*(\sJ\cap \sJ') = q_* \sJ \iff \sJ \cap \sJ' = \sJ \iff \sJ \subseteq \sJ'.\]
It follows from our assumptions that this occurs if and only if $\sJ = \sJ'$.  Further,  
\[ q_* \sJ = q_*(\sJ \cap \sJ') = q_* (\sJ) \cap q_* \sJ' \iff q_* \sJ \subseteq q_* \sJ'.\]
From (1) we obtain that this is equivalent to $q_* \sJ = q_* \sJ'$.  
\end{proof}

We now apply these regularity results to show that ${}_{\ell}\Yt \cong {}_{\ell} \Zt$ for $\ell \gg 0$.  
We will need the following easy lemma:

\begin{lemma}\label{lem:s is scheme-theoretically surjective}
 Let $A, B$ be commutative noetherian local $\kk$-algebras with residue field $\kk$.  Let $s: A \to B$ be a local homomorphism.
If $s^*:  \Hom^{\alg}(B,C) \to \Hom^{\alg}(A,C)$ is surjective for all finite-dimensional commutative local $\kk$-algebras $C$, then $s$ is injective.  
\end{lemma}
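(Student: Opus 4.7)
The plan is to argue by contradiction: assuming $s$ is not injective, I will construct a local $\kk$-algebra homomorphism $f: A \to C$ with $C$ finite-dimensional local that does not vanish on $\ker(s)$, hence cannot factor through $s$.

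First, suppose $s$ is not injective, and pick $a \in \ker(s)$ with $a \neq 0$. Since $A$ is noetherian local, Krull's intersection theorem gives $\bigcap_{n} \mathfrak{m}_A^n = 0$, so there is some $n \geq 1$ with $a \notin \mathfrak{m}_A^n$. I would then take $C := A/\mathfrak{m}_A^n$ and $f: A \to C$ the canonical quotient map. Note that $f$ is a local $\kk$-algebra homomorphism (it sends $\mathfrak{m}_A$ into $\mathfrak{m}_A/\mathfrak{m}_A^n$, the maximal ideal of $C$) and $f(a) \neq 0$.

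Next I need to verify that $C$ is finite-dimensional over $\kk$. Since $A$ is noetherian with residue field $\kk$, each graded piece $\mathfrak{m}_A^i/\mathfrak{m}_A^{i+1}$ is a finitely generated module over $A/\mathfrak{m}_A = \kk$, i.e.\ a finite-dimensional $\kk$-vector space. The finite filtration
\[ 0 \subseteq \mathfrak{m}_A^{n-1}/\mathfrak{m}_A^n \subseteq \mathfrak{m}_A^{n-2}/\mathfrak{m}_A^n \subseteq \cdots \subseteq \mathfrak{m}_A/\mathfrak{m}_A^n \subseteq A/\mathfrak{m}_A^n \]
then shows $\dim_{\kk} C < \infty$, and clearly $C$ is local.

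Finally, by the surjectivity hypothesis applied to this finite-dimensional local $\kk$-algebra $C$, there is a $\kk$-algebra homomorphism $g: B \to C$ with $g \circ s = f$. But then $f(a) = g(s(a)) = g(0) = 0$, contradicting $f(a) \neq 0$. Hence $s$ must be injective. There is no real obstacle here; the argument is a direct application of Krull's intersection theorem together with the standard structure of noetherian local rings with residue field $\kk$.
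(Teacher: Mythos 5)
Your proof is correct and follows essentially the same route as the paper's: both reduce to showing that a nonzero element of $\ker(s)$ would survive in some finite-dimensional quotient $A/\mathfrak{m}_A^n$ (which exists by Krull's intersection theorem), and then derive a contradiction with the surjectivity of $s^*$ applied to that quotient. The only cosmetic difference is that the paper phrases the conclusion as $\ker s \subseteq \bigcap_k \mathfrak{m}_A^k = 0$ via Artin--Rees, whereas you invoke Krull's intersection theorem up front; these are the same argument.
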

\begin{proof}
 Let $\mf{m}$ be the maximal ideal of $A$ and let $\mf{n}$ be the maximal ideal of $B$.
Let $f \in \ker s$.  
Suppose first that there is some $k$ so that $f \in \mf{m}^{k-1} \ssm \mf{m}^k$.
Let $C := A/\mf{m}^k$ and let $\pi: A \to C$ be the natural map. 
Then $C$ is a finite-dimensional artinian local $\kk$-algebra.  
Now, $\pi(f) \neq 0$ but $s(f)=0$.  
Thus $\pi \not \in \im(s^*)$, a contradiction.

We thus have $\ker s \subseteq \bigcap_k \mf{m}^k$.  
By the Artin-Rees lemma, $\ker s = 0$.
\end{proof}

\begin{proposition}\label{prop:immersion of point schemes}
Let $m$ be a \pp, and let $n \geq \ell \geq m$.  Let
${}_{\ell}Z_n$ be the $\ell$-shifted length $(n-\ell+1)$ point scheme of $\sh{S}$, with truncation morphism $\phi_n:  Z_n \to Z_{n-1}$ as in Proposition~\ref{prop-Z}.  Let 
${}_{\ell}Y_n$ be the   $\ell$-shifted  length $(n-\ell+1)$ point scheme of $S$, with truncation morphism
$ 
\chi_n:  {}_{\ell}Y_n   \to {}_{\ell}Y_{n-1}$ as in Theorem~\ref{fine moduli}.  
Let ${}_{\ell}\Zt := \varprojlim {}_{\ell} Z_n$ and let ${}_{\ell}\Yt :=  \varprojlim Y_n$.

Then the global section functor induces a closed immersion
 $s_n :  {}_{\ell}Z_n \to {}_{\ell}Y_n$
   so that the diagram
\beq\label{snow} \xymatrix{
{}_{\ell}Z_{n}\ar[r]^{s_n} \ar[d]_{\phi_{n}}	& {}_{\ell}Y_{n} \ar[d]^{\chi_{n}} \\
{}_{\ell}Z_{n-1} \ar[r]_{s_{n-1}}	& {}_{\ell}Y_{n-1}
} \eeq
commutes. 
\end{proposition}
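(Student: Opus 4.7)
The plan is to define $s_n$ at the level of moduli functors via the global sections functor $q_*$, then argue separately that this morphism is a monomorphism and that it is proper.  Since a proper monomorphism of noetherian schemes is automatically a closed immersion, these two properties together will yield the conclusion.

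First, I would construct $s_n$ as follows.  For a commutative noetherian $\kk$-algebra $p:\kk\to C$, a $\Spec(C)$-point of ${}_{\ell}Z_n$ is represented by a submodule $\sJ\subseteq(\sS_{\geq\ell})_C$ whose graded quotient has the prescribed shape.  Because $n\geq\ell\geq m$ (the positivity parameter), Corollary~\ref{cor-ptideal}(1) shows that in each degree $\ell\leq j\leq n$, $q_*\sJ_j$ is a $C$-submodule of $S_j\otimes_\kk C$ with rank-one projective cokernel, and that $R^1q_*\sJ_j=0$, so that $q_*$ preserves the relevant short exact sequences.  Assembling across degrees and using compatibility with the $S$-action gives an embedded $\ell$-shifted truncated $C$-point module over $S$ of length $n-\ell+1$, that is, a $\Spec(C)$-point of ${}_{\ell}Y_n$.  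The uniform regularity established in Lemma~\ref{lem-posparam-correct} makes $q_*$ commute with flat base change on these subsheaves, so the assignment is functorial in $C$, and Yoneda yields a morphism of schemes $s_n:{}_{\ell}Z_n\to{}_{\ell}Y_n$.  Commutativity of \eqref{snow} is then immediate, because both $\phi_n$ and $\chi_n$ are the truncation-in-degree-$n$ maps, which commute with $q_*$.

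Next I would verify that $s_n$ is a monomorphism, which follows directly from Corollary~\ref{cor-ptideal}(3): given $\Spec(C)$-points represented by $\sJ,\sJ'\subseteq(\sS_{\geq\ell})_C$ with $s_n(\sJ)=s_n(\sJ')$, the equalities $q_*\sJ_j=q_*\sJ'_j$ for all $\ell\leq j\leq n$ force $\sJ_j=\sJ'_j$ in each such degree, so the two moduli points agree.  Hence $s_n$ is injective on noetherian affine test points, which is enough to conclude that $s_n$ is a monomorphism of finite-type schemes.  Finally, both ${}_{\ell}Z_n$ and ${}_{\ell}Y_n$ are projective $\kk$-schemes---the former via the closed embedding of Proposition~\ref{prop-Z} into $X\times W_\ell\times\cdots\times W_n$, the latter by Proposition~\ref{prop-likeATV}---so $s_n$ is automatically proper, and a proper monomorphism of noetherian schemes is a closed immersion.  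The only real technical content of the argument is housed in the positivity-parameter machinery, which simultaneously guarantees exactness of $q_*$ on the relevant subsheaves, global generation (so that distinct $\sJ_j$ yield distinct $q_*\sJ_j$), and base-change compatibility; once these are in place, the monomorphism-plus-proper reasoning is essentially formal.
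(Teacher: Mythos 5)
Your proposal is correct and follows essentially the same route as the paper: the map is defined degree-by-degree via $q_*$ using Corollary~\ref{cor-ptideal}(1), injectivity comes from Corollary~\ref{cor-ptideal}(3), and projectivity upgrades this to a closed immersion. The only (cosmetic) difference is in the last formal step --- the paper checks injectivity on $\kk$- and $\kk[\epsilon]$-points and cites the standard projective-immersion criterion, while you verify the monomorphism property on all noetherian test rings and invoke ``proper monomorphism $\Rightarrow$ closed immersion''; both are valid.
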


\begin{proof}
 Let $p:  \kk \to C$ be a commutative noetherian $\kk$-algebra. 
Let $p:  X_C \to X$ and $q:  X_C \to \Spec C$ be the two projection maps, as usual.
Note that if $\sh{J} \subset p^* \sh{S}_{\geq \ell}$ is the defining ideal of an $\ell$-shifted length $(n-\ell+1)$ truncated $C$-point module over $\sh{S}$, then by Corollary~\ref{cor-ptideal}(1), $q_*\sh{J}$ is the defining ideal of an $\ell$-shifted length $(n-\ell+1)$ truncated $C$-point module over $S$. 
 Thus  $s_n$ is well-defined. 
By Corollary~\ref{cor-ptideal}(3), $s_n$ is injective on $\kk$-points and on $\kk[\epsilon]$ points.  It is standard (cf. the proof of \cite[Theorem~14.9]{Harris}) that, because $s_n$ is projective, $s_n$ is a closed immersion.  
That \eqref{snow} commutes is immediate.
\end{proof}

We will see that ${}_{\ell}\Yt$ and ${}_{\ell}\Zt$ are isomorphic.  It does not seem to be generally true that ${}_{\ell}Y_n$ and ${}_{\ell}Z_n$ are isomorphic; but we will see that there is an induced isomorphism of  certain naturally defined  closed subschemes.

Let $\ell \in \NN$.  For any $n$, let $\Phi_n: {}_{\ell}\Zt \to {}_{\ell}Z_n$ and $\Upsilon_n:  {}_{\ell}\Yt \to {}_{\ell}Y_n$ be the induced maps.  
For any $n \geq \ell$,  define ${}_{\ell}Z_n' \subseteq {}_{\ell}Z_n$ to be the image of $\Phi_n:  {}_{\ell}\Zt \to {}_{\ell}Z_n$.  That is, 
\[ {}_{\ell}Z_n' = \bigcap_{i \geq 0} \im (\phi^i:  {}_{\ell}Z_{n+i} \to {}_{\ell}Z_n).\]
Since ${}_{\ell}Z_n$ is noetherian and the $\phi_n$ are closed, this is a closed subscheme of ${}_{\ell}Z_n$, equal to $\im (\phi^{k}:  {}_{\ell}Z_{n+k} \to {}_{\ell}Z_n)$ for some $k$.  Similarly, let ${}_{\ell}Y_n' = \im (\Upsilon_n:  {}_{\ell}\Yt \to {}_{\ell}Y_n)$.  It is clear that ${}_{\ell}\Zt = \varprojlim {}_{\ell}Z_n'$ and ${}_{\ell}\Yt = \varprojlim {}_{\ell}Y'_n$.  We refer to ${}_{\ell}Z_n'$ and ${}_{\ell}Y_n'$ as  {\em essential}  point schemes, since modules in ${}_{\ell}Z_n'$ and ${}_{\ell}Y_n'$ are truncations of honest (shifted) point modules.

\begin{theorem}\label{thm:ptschemes}
Let $m$ be a \pp, and let $n \geq \ell \geq m$.   
\begin{enumerate}
 \item 
The morphism $s_n:  {}_{\ell}Z_n \to {}_{\ell} Y_n$ defined in Proposition~\ref{prop:immersion of point schemes} induces an isomorphism of essential point schemes
\[ s_n':  \xymatrix{{}_{\ell}Z'_n \ar[r]^{\cong} & {}_{\ell}Y'_n.}\]

\item The  limit
$s:  {}_{\ell}\Zt \to {}_{\ell}\Yt$
is an isomorphism of stacks.
\end{enumerate}
\end{theorem}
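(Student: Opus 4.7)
Part (2) follows at once from (1): by the construction of the essential schemes, ${}_\ell \Zt = \varprojlim {}_\ell Z_n'$ and ${}_\ell \Yt = \varprojlim {}_\ell Y_n'$, so an inverse system of isomorphisms $(s_n')_{n \geq \ell}$ induces an isomorphism of limits.  Thus the focus is on (1).

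From Proposition \ref{prop:immersion of point schemes} we already know $s_n$ is a closed immersion, and the commutative square \eqref{snow} gives $s_n(Z_n') \subseteq Y_n'$, so the restriction $s_n': Z_n' \to Y_n'$ is itself a closed immersion.  To show it is an isomorphism, it suffices to exhibit, for each closed $y \in Y_n'$, a (unique) preimage $z \in Z_n'$ and show that the induced surjection $\sO_{Y_n',y} \to \sO_{Z_n',z}$ is also injective.  By Lemma \ref{lem:s is scheme-theoretically surjective}, this injectivity reduces to a lifting property:  every $C$-point of $Y_n'$ (for $C$ a finite-dimensional local commutative $\kk$-algebra) must lift to a $C$-point of $Z_n'$.

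The plan is to define this lift uniformly for any commutative noetherian $\kk$-algebra $C$.  Given a $C$-point of $Y_n'$, represented by a graded ideal $J \subseteq S_{\geq \ell} \otimes_\kk C$, set $\sJ_i \subseteq p^*\sS_i$ (for $\ell \leq i \leq n$) equal to the image of the composition
\[
q^* J_i \longrightarrow q^* q_* p^* \sS_i \twoheadrightarrow p^* \sS_i,
\]
where the second arrow is the adjunction counit, surjective for $i \geq m$ by the positivity parameter (Lemma \ref{lem-posparam-correct}).  Three properties then need verification:  (a) $\sJ$ is a graded $\sS$-submodule of $p^* \sS_{\geq \ell}$, which is a direct diagram chase from the ideal property of $J$ and the fact that multiplication on $p^* \sS$ factors through the counit; (b) each quotient $p^* \sS_i/\sJ_i$ is a rank-one projective $C$-module supported finitely over $\Spec C$, so that $\sJ$ represents a $C$-point of $Z_n$; (c) $q_* \sJ_i = J_i$ for all $i$, so $s_n(\sJ) = J$, and $\sJ$ lies in $Z_n'(C)$.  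Assuming (b), part (c) goes as follows:  letting $y$ be the $C$-point supporting the quotient, we have $\sI_y p^* \sS_i \subseteq \sJ_i$, so Lemma \ref{lem-posparam-correct} gives $R^1 q_* \sJ_i = 0$; applying $q_*$ to $0 \to \sJ_i \to p^*\sS_i \to p^*\sS_i/\sJ_i \to 0$ yields a short exact sequence, and the natural surjection $q^* M_i \twoheadrightarrow p^*\sS_i/\sJ_i$ combined with rank one comparison between $M_i$ and $q_*(p^*\sS_i/\sJ_i)$ forces $q_* \sJ_i = J_i$.  That $\sJ$ actually lies in $Z_n'$ follows from applying the same construction to the arbitrarily long extensions of $J$ in $Y_{n+k}'$ guaranteed by $J \in Y_n'$, since our construction is natural in $J$ and commutes with the truncation maps.

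The principal obstacle is (b):  showing the naive sheafification has the correct family structure.  This is a relative, truncated version of the sheafification in Van den Bergh's equivalence (Theorem \ref{thm-VdBSerre}) applied to point modules.  The plan is to verify flatness of $p^*\sS_i/\sJ_i$ over $C$ fiber by fiber:  on each closed-point fiber of $\Spec C$, essentiality together with Theorem \ref{thm-RS}(3) identifies the underlying point module as classified by a geometric point $x \in X$, forcing the fiber of $p^*\sS_i/\sJ_i$ to be $\kk_x$; the standard fiberwise flatness criterion, supported by the vanishing $R^1 q_* \sJ_i = 0$ provided by the positivity parameter, then promotes this to $C$-flatness of the quotient in families.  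Once (a)--(c) are in hand, Lemma \ref{lem:s is scheme-theoretically surjective} applies pointwise and $s_n'$ is an isomorphism, completing (1) and hence (2).
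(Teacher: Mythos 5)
Your reduction of (2) to (1) is formally fine (the squares \eqref{snow} commute and both limits are limits of the essential schemes), but your proof of (1) has a genuine gap, and it is precisely the gap that forces the paper to run the argument in the opposite order: the paper proves (2) first, by lifting $C$-points of ${}_{\ell}\Yt$ (i.e.\ \emph{infinite} embedded point modules) to ${}_{\ell}\Zt$, and then deduces (1) formally from the observation that $s_n'$ is a closed immersion which is scheme-theoretically surjective, because ${}_{\ell}Y_n'$ is by definition the scheme-theoretic image of $\Upsilon_n = s_n'\circ \Phi_n\circ s^{-1}$, and a scheme-theoretically surjective closed immersion is an isomorphism.

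The gap in your direct attack on (1) is twofold. First, your verification of (b) --- that $p^*\sS_i/\sJ_i$ is a rank one projective $C$-module supported along a section $y$ of $q$ --- is exactly the hard content of the theorem, and it is \emph{false} for general (non-essential) truncated point modules: this is why ${}_{\ell}Y_n\not\cong{}_{\ell}Z_n$. The paper's proof of this step is intrinsically asymptotic: it uses right-ampleness of $\{p^*\sS_k\}$ and critical density to show $\sM_j\cong\sF\otimes_{X_C}p^*\sL_j$ for $j\gg 0$, identifies $\sF\cong\sO_y$ from $q_*(\sF\otimes p^*\sL_j)\cong C$ for $j\gg 0$, and only then propagates backwards to small $i$ via the annihilator argument. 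None of this is available for a module truncated at degree $n$; moreover your appeal to Lemma~\ref{lem-posparam-correct} to get $R^1q_*\sJ_i=0$ already presupposes the containment $\sI_y p^*\sS_i\subseteq\sJ_i$ for a $C$-point $y$, which is part of what you are trying to prove. Second, your escape route --- ``apply the same construction to the arbitrarily long extensions of $J$ in $Y_{n+k}'$ guaranteed by $J\in Y_n'$'' --- is not available: ${}_{\ell}Y_n'$ is a scheme-theoretic image, and a $C$-point of a scheme-theoretic image for a non-reduced local $C$ (e.g.\ $C=\kk[\epsilon]$) need not lift to the source (think of the normalization of a cuspidal curve). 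Since Lemma~\ref{lem:s is scheme-theoretically surjective} requires surjectivity on $\Hom^{\alg}(-,C)$ for all finite-dimensional local $C$, and not merely on $\kk$-points, this lifting failure is fatal to the argument as written. The fix is the paper's: establish bijectivity on $C$-points at the level of the inverse limits, where every $C$-point genuinely is an infinite point module, and then descend to the finite essential schemes.
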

\begin{proof}
Since the subscript $\ell$ will remain fixed, we suppress it in the notation.     
Let $s_n'  := s_n |_{Z_n'}$.  It follows from commutativity of \eqref{snow} that $s_n'(Z_n')\subseteq Y'_n$.

We next prove (2).  
The limit $s = \varprojlim s_n$ is clearly a morphism of stacks --- that is, a natural transformation of functors.  Let $C$ be a commutative finite-dimensional local $\kk$-algebra.  We will show that $s$ is bijective on $C$-points.  

Let $y \in \Yt(C)$ be a $C$-point of $\Yt$, which by Theorem~\ref{fine moduli} corresponds to an exact sequence
\[ 0 \to J \to (S_C)_{\geq \ell} \to M \to 0,\]
where $M$ is an $\ell$-shifted $S_C$-point module.

For $i \geq \ell$, let $\sJ_i \subseteq p^* \sS_i$ be the subsheaf generated by $J_i \subseteq q_* p^* \sS_i$.
Let $\sJ := \bigoplus_{i \geq \ell} \sJ_i$.  
We will show that $\sM:= p^* \sS_{\geq \ell}/\sJ$ is an $\ell$-shifted $p^* \sS$-point module:  that is, that there is a $C$-point $y$ of $X$ so that $\sM_n \cong \sO_y \otimes p^* \sL_n$ for all $n \geq \ell$.

Since $p^* \sS_j$ is globally generated for $j \geq m$,   we have
$\sJ_i p^* \sS_j^{\sigma^i} \subseteq \sJ_{i+j}$ for $i \geq \ell$, $j \geq m$.  
Thus $ \sM$ is a coherent right module over the bimodule algebra $\sS' := \sO_{X_C} \oplus \bigoplus_{j \geq m } p^* \sS_j $.  
Further, each $\sM_j$ is clearly torsion over $X$, as $\Spec C$ is $0$-dimensional.    
As in \cite[Lemma~4.1(1)]{RS-0}, it follows from critical density of the orbits of the points in $P$ that there are a coherent $X$-torsion sheaf $\sF$ on $X_C$ and $n_0 \geq \ell$ so that $\sM_j \cong \sF \otimes_{X_C} p^* \sL_j$ for $j \geq n_0$.  
By critical density again, there is $n_1 \geq n_0$ so that
\[ \Supp(\sF) \cap p^{-1} \bigl( \sigma^{-j}(P) \bigr) = \emptyset\]
for $j \geq n_1$.  
This implies that for $j \geq n_1$ and $k \geq 1$, we have 
\[ \sM_j \cdot p^* \sS_k^{\sigma^j} =  \sM_j \otimes_{X_C} p^* \sS_k^{\sigma^j} = \sM_j \otimes _{X_C} p^* \sL_k^{\sigma^j}  \cong \sM_{j+k}\]
and $\sJ_j(p^* \sS_k^{\sigma^j}) = \sJ_{j+k}$.
  (In particular, $\sJ_{\geq n_1}$ and $\sM_{\geq n_1}$ are right $p^* \sS$-modules.)

By Lemma~\ref{lem-ample}, $\{p^* \sS_k^{\sigma^j}\}_{k \geq 0}$ is right ample on $X_C$.
As in \cite[Lemma~9.3]{RS}, for any $i \geq \ell$ and $k \gg 0$ we have 
$  q_*( \sJ_i p^* \sS_k^{\sigma^i})  =  J_i (S_C)_k \subseteq J_{i+k}$.  
For $i \geq n_1$ and for $k \geq 1$, we have $q_*(\sJ_i p^* \sS_k^{\sigma^i})= q_* \sJ_{i+k}\supseteq J_{i+k}$.  
There is thus $n_2 \geq n_1$ so that $J_j = q_*\sJ_j$ for $j \geq n_2$.

It follows from  Lemma~\ref{lem-ample} that there is $n_3 \geq n_2$ so that the top row of
\[
 \xymatrix@R=3pt{
0 \ar[r] & q_* \sJ_j\ar@{=}[d] \ar[r] & (S_C)_j \ar[r] & q_* \sM_j \ar@{=}[d] \ar[r] & 0 \\
& J_j && q_*( \sF \otimes_{X_C} p^* \sL_j)
}
\]
is exact for $j \geq n_3$.  
Thus $q_*( \sF \otimes_{X_C} p^* \sL_j) \cong (S_C)_j/J_j \cong C$ for $j \geq n_3$.  
Since  $\{p^* \sL_j\}$ is right ample, this implies that $\sF \cong \sO_y$ for some $C$-point $y$ of $X$.

For $\ell \leq i \leq n_3$, let $\sJ'_i := \sJ_i + \sI_y p^* \sS_i$.  
By choice of $\ell$, $\sJ'_i$ is 0-regular.  
Thus the rows of the commutative diagram
\beq\label{diagfoo}
 \xymatrix{
0 \ar[r] & J_i \ar[d]_{\subseteq} \ar[r] & (S_C)_i \ar@{=}[d] \ar[r] & C \ar[d]^{\alpha} \ar[r] & 0 \\
0 \ar[r] & q_* \sJ'_i \ar[r] & q_*p^* \sS_i \ar[r] &  q_* (p^* \sS_i/\sJ'_i)\ar[r] & 0
}
\eeq
are exact, and $\alpha$ is therefore surjective.  
This shows that, as a $(C \cong \sO_y)$-module, $p^* \sS_i/\sJ'_i$ is cyclic.

Let $N:= \Ann_C(p^* \sS_i/\sJ'_i)$.  
For $i \gg 0$, $p^* \sS_i/\sJ_i \cong \sO_y \otimes_{X_C} p^* \sL_i$ is killed by $\sI_y$.  
Thus for $i\gg 0$  we have $(\sJ'_i)_y = (\sJ_i)_y + (\sI_y p^* \sS_i)_y = (\sJ_i)_y$, and
\[ (\sJ_{i+j})_y \supseteq (\sJ_i)_y( p^* \sS_j^{\sigma^i})_y = (\sJ'_i )_y(p^* \sS_j^{\sigma^i})_y \supseteq 
N(p^* \sS_i)_y(p^* \sS_j^{\sigma^i})_y = 
 N( p^* \sS_{i+j})_y.\]
As $\Ann_C(p^* \sS_{i+j}/\sJ_{i+j}) = 0$ for $j \gg 0$, we must have $N = 0$.  

Thus in fact $p^* \sS_i /\sJ'_i \cong \sO_y \cong C$.  
Looking again at \eqref{diagfoo}, we see that $\alpha$ is an isomorphism, and so $J_i = q_*\sJ'_i$.  
As $\sJ'_i$ is 0-regular, it is globally generated:  in other words, $\sJ_i = \sJ'_i$.

We still need to show that $\sJ$ is a $p^* \sS$-module.  
Let $i \geq \ell$ and suppose that $\sJ_i p^* \sS_1^{\sigma^i} \not\subseteq \sJ_{i+1}$.   
Then $(\sJ_{i+1} + \sJ_i p^* \sS_1^{\sigma^i})/\sJ_{i+1}$ is a nonzero submodule of $p^* \sS_{i+1}/\sJ_{i+1} \cong \sO_y$.
Since $(\sJ_{i+1}+ \sJ_i p^* \sS_1^{\sigma^i}) \sS_j^{\sigma^{i+1}} \subseteq \sJ_{i+j+1}$ for all $j \gg 0$, a similar argument to the last paragraph but one produces a contradiction.  

Thus $p^* \sS_{\geq \ell}/\sJ$ is an $\ell$-shifted $C$-point module for $\sS$, and $q_*\sJ = J$.
This shows that $s:  \Zt \to \Yt$ induces a  surjection on $C$-points.   It follows from Corollary~\ref{cor-ptideal}(3) that $s$ is injective on $C$-points.

Consider the commutative diagram
\beq\label{foo} \xymatrix{
 \Zt \ar[r]^{s} \ar[d]_{\Phi_n} & \Yt \ar[d]^{\Upsilon_n} \\
Z'_n \ar[r]_{s'_n}  & Y'_n.}
\eeq
Let $x \in X$ be a $\kk$-point.  
There is some $N$ so that for $n \geq N$, $\Phi_n$ is a local isomorphism at all points in the preimage of $x$.  
We claim that for $n \geq N$, in fact all maps in \eqref{foo} are local isomorphisms at all points in the preimage of $x$.
In particular, $s$ is an isomorphism in the preimage of  $x$; since $x$ was arbitrary, $s$  is therefore an isomorphism of stacks.

So it suffices to prove the claim. 
We may work locally.  
If $z \in \Zt$ is a point lying over $x$, let $z_n $, $y$, $y_n$ be the images of $z$ in $Z'_n$, $\Yt$, and $Y'_n$, respectively.
Let  $B := \sO_{\Zt,z} \cong \sO_{Z'_n, z_n}$.
Let $A := \sO_{\Yt, y}$ and let $A' := \sO_{Y'_n, y_n}$.
We have  a commutative diagram of local homomorphisms
\[ \xymatrix{
 B \ar@{=}[d] & A \ar[l]_{s^\#} \\
B & A'. \ar[l]^{(s_n')^\#} \ar[u]_{\Upsilon_n^\#} }
\]

As $\Upsilon_n$ is scheme-theoretically surjective, $\Upsilon_n^{\#}$ is injective.  
It follows from Proposition~\ref{prop-likeRS} that $A$ is isomorphic to a local ring of some $Y'_m$ and in particular is a noetherian $\kk$-algebra.
By  Lemma~\ref{lem:s is scheme-theoretically surjective}, $s^{\#}$ is injective.  
Thus $(s_n')^{\#}$ is  injective.  
As $s_n$ is a closed immersion,  $(s_n')^{\#}$ is also surjective and thus an isomorphism; thus all maps in \eqref{foo} are local isomorphisms above $x$.
This proves the claim, as required.

(1).  Consider the diagram \eqref{foo}.   By Proposition~\ref{prop:immersion of point schemes}, $s_n:  Z_n \to Y_n$ is a closed immersion.  Thus the restriction $s'_n: Z'_n \to Y'_n$ is also  a closed immersion.

On the other hand, $Y'_n$ is the scheme-theoretic image of $\Upsilon_n$ and $s$ is an isomorphism.
Thus the composition $\Upsilon_n s = s_n' \Phi_n$ is scheme-theoretically surjective,  so $s_n'$ is scheme-theoretically surjective.
But a scheme-theoretically surjective closed immersion is an isomorphism.
 \end{proof}

Of course, the defining ideal of a 1-shifted  point module also defines a 0-shifted  point module, so if the \pp ~$m=1$, the conclusions of Theorem~\ref{thm:ptschemes} in fact hold for $m=0$.  In this situation, we will refer to $m=0$ as a \pp\!, by slight abuse of notation, since we need only the isomorphism ${}_{\ell} \Yt \cong {}_{\ell} \Zt$ for $\ell \geq m$ in the sequel. 

\begin{corollary}\label{cor:foobar}
 Let $\ell\in \NN$ and let ${}_{\ell} \Yt$ be the moduli stack of embedded $\ell$-shifted $S$-point modules, as above.  Then 
${}_{\ell}\Yt$ is a noetherian fpqc-algebraic stack.  
\end{corollary}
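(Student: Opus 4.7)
My plan is to reduce to the case $\ell \geq m$ for $m$ a positivity parameter---where the conclusion is immediate from earlier results---and to handle the remaining range $\ell < m$ by a truncation argument. For $\ell \geq m$, I would simply invoke Theorem~\ref{thm:ptschemes}(2), which provides an isomorphism $s: {}_\ell\Zt \xrightarrow{\cong} {}_\ell\Yt$ of stacks, together with Theorem~\ref{fine moduli}, which establishes that ${}_\ell\Zt$ is a noetherian fpqc-algebraic stack; combined, these give the claim.

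For $\ell < m$, I would fix $m \geq \ell$ to be a positivity parameter and consider the truncation morphism $\tau: {}_\ell\Yt \to {}_m\Yt$ sending an embedded $\ell$-shifted point module $S_{\geq \ell} \twoheadrightarrow M$ to $S_{\geq m} \twoheadrightarrow M_{\geq m}$. This is well defined because $M$ is cyclic generated in degree $\ell$ and $S$ is generated in degree one, so $M_m = M_\ell \cdot S_{m-\ell}$ generates $M_{\geq m}$ over $S$; and $\tau$ arises as the limit of the compatible morphisms ${}_\ell Y_n \to {}_m Y_n$ for $n \geq m$.

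The main obstacle, and the core technical step, is to show $\tau$ is representable by projective morphisms. Given a test scheme $T$ and a morphism $T \to {}_m\Yt$ corresponding to a graded ideal $J \subset S_{\geq m,T}$ with cyclic quotient of the correct Hilbert function, a point of $T \times_{{}_m\Yt} {}_\ell\Yt$ is an extension of $J$ to an ideal $I \subset S_{\geq \ell,T}$. Such an extension amounts to the data of $\mathcal{O}_T$-submodules $I_\ell, \ldots, I_{m-1}$, with each $S_{i,T}/I_i$ a flat line bundle over $T$ (hence classified by $\mathbb{P}(S_i^\vee)_T$), subject to the closed conditions $I_i \cdot S_{1,T} \subseteq I_{i+1}$ for $\ell \leq i \leq m-2$ and $I_{m-1} \cdot S_{1,T} \subseteq J_m$; all further compatibilities follow inductively from these and the ideal property of $J$. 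Thus $T \times_{{}_m\Yt} {}_\ell\Yt$ is a closed subscheme of the projective $T$-scheme $\mathbb{P}(S_\ell^\vee)_T \times_T \cdots \times_T \mathbb{P}(S_{m-1}^\vee)_T$, in the same spirit as Proposition~\ref{prop-likeATV}.

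Granted representability, the transfer of structure is routine: pull back a noetherian affine fpqc atlas $U \to {}_m\Yt$ to obtain a projective (hence noetherian) $U$-scheme $V = U \times_{{}_m\Yt} {}_\ell\Yt$, and a finite affine cover of $V$ yields a noetherian affine scheme representably and fpqc covering ${}_\ell\Yt$. The diagonal of ${}_\ell\Yt$ factors as the relative diagonal of $\tau$ (a closed immersion, since $\tau$ is projective and hence separated) composed with the pullback of the diagonal of ${}_m\Yt$, which is representable, separated, and quasi-compact; hence so is $\Delta_{{}_\ell\Yt}$. This completes the verification that ${}_\ell\Yt$ is a noetherian fpqc-algebraic stack.
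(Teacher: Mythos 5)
Your proof is correct and takes essentially the same route as the paper: the case $\ell \geq m$ is handled identically via Theorems~\ref{thm:ptschemes} and \ref{fine moduli}, and for $\ell < m$ the paper likewise transports the atlas from ${}_m\Yt$ along the truncation $M \mapsto M_{\geq m}$, encoding the finite head data in degrees $\ell,\dots,m-1$ projectively --- there via the closed immersion $\Phi_m \times T: {}_{\ell}\Yt \to {}_{\ell}Y_m \times_X ({}_m\Yt)$, in your version via the (equivalent, and somewhat more explicitly justified) observation that $\tau$ is representable by projective morphisms.
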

\begin{proof}
We know that ${}_{\ell}\Yt$ is a sheaf in the fpqc topology by Theorem~\ref{fine moduli}.   
Let $m$ be a \pp.  If $\ell \geq m$, then ${}_{\ell}\Yt\cong {}_{\ell} \Zt$ is noetherian fpqc-algebraic by Theorem~\ref{fine moduli}.

Suppose then that $\ell < m$.  
Let $T:  {}_{\ell} \Yt \to {}_m \Yt$ be the morphism defined by $T(M) := M_{\geq m}$.  
It is straightforward that $T$ is a morphism of functors, and that the product
\[ \Phi_m \times T : {}_{\ell} \Yt \to {}_{\ell} Y_m \times_X ({}_m \Yt)\]
is a closed immersion.
Since ${}_m \Yt$ is noetherian by the first paragraph,   it has an fpqc cover $U \to {}_m \Yt$ by a noetherian affine scheme $U$.  This cover can clearly be lifted and refined to induce an fpqc cover $V \to {}_\ell Y_m \times_X ({}_m \Yt)$ where $V$ is a noetherian affine scheme.   But then the Cartesian product
\[ 
\xymatrix{
V' \ar[r] \ar[d] \ar @{} [dr] |{\square} & {}_{\ell}\Yt \ar[d]^{\Phi_m \times T} \\
V \ar[r] & {}_{\ell}Y_m \times_X ({}_m \Zt)}
\]
gives an fpqc cover $V' \to \Yt$.  
Since $\Phi_m \times T$ is a closed immersion, so is $V' \to V$.  Thus $V'$ is isomorphic to 
 a closed subscheme of $V$ and is noetherian and affine. 
\end{proof}

Let $m$ be  a \pp\ and let $\ell \geq m$.  
We note that the relevant component of ${}_{\ell} \Yt \cong {}_{\ell} \Zt$ is the component containing the $\kk(X)$-point corresponding to the {\em generic point module} 
$\kk(X)z^{\ell} \oplus \kk(X)z^{\ell+1} \oplus \cdots$, 
which is isomorphic to 
$(Q_{\gr}(S))_{\geq \ell}.$

\section{A coarse moduli space for point modules}\label{COARSE MODULI}
In this section, we consider point modules up to module isomorphism in $\rqgr S$, and show that the scheme $X$ is a coarse moduli scheme for this functor.

We define the following maps. For any $\ell$, let $\Phi:  {}_{\ell}\Zt \to X$ be the map induced from the $f_n$.  Let $\Psi:  {}_0 \Zt \to {}_0 \Zt$ be the map induced from $\psi_n:  {}_0 Z_n \to {}_0 Z_{n-1}$.  Taking the limit of \eqref{d2}, we obtain that
\[ \Phi \Psi = \sigma \Phi:  {}_0 \Zt \to X.\]

For any noetherian commutative $\kk$-algebra $C$, there is  a graded $(\sO_{X_C}, \sigma \times 1)$-bimodule algebra $\sS_C$ given by pulling back $\sS$ along the projection map $X_C \to X$.  Taking global sections gives a functor
$\rGr \sS_C \to \rGr S_C.$
If $C=\kk$, this induces an equivalence
$\rqgr \sS_C \to \rqgr S_C$
by Theorem~\ref{thm-VdBSerre}.    In order to avoid the issues involved with extending this result to bimodule algebras over arbitrary base schemes, we work instead with point modules in $\rQgr \sS_C$ and $\rQgr S_C$.

Let $\ell \geq m$ where $m$ is a \pp\ (Definition \ref{positivity parameter}) and let $F$ be the moduli functor of (embedded) $\ell$-shifted point modules over $S$, as in the previous section.   
  Define an equivalence relation $\sim$ on $F(C)$ by saying that $M \sim N$ if (their images) are isomorphic in $\rQgr S_C$.  Define a contravariant functor $G:  \text{Affine schemes} \longrightarrow \text{Sets}$
by sheafifying, in the fpqc topology, the presheaf $G^{\on{pre}}$ of sets $\Spec C \mapsto  F(C)/\sim$.
Let $\mu:  F \to G$ be the natural map.  Likewise, let $\sh{F} \cong {}_{\ell} \Zt$ be the moduli functor of $\ell$-shifted point modules over $\sh{S}$, and let $\sh{G} := \sh{F}/\sim$, as above.  Let $\mu:  \sh{F} \to \sh{G}$ be the natural map.

We recall that $a_n: X_n \to X$ is the blowup of $X$ at $\sI_n$, and that by Corollary~\ref{cor-X-maps} there are morphisms $\alpha:  X_{n+1}\to X_n$ that intertwine with  the maps $a_n$.

We briefly discuss point modules over local rings.  
We note that if $C$ is a local ring of a point of $\Zt^o \cong \Xt$ with maximal ideal $\mf{m}$, then the map 
$h_{\Zt}(C) \to \sh{F}(C)$
has a particularly simple form.  Let $\zeta:  \Spec C \to \Xt$ be the induced morphism.  By critical density, there is some $n \geq m$ so that $\zeta_n(\mf{m})$ is not a fundamental point of any of the maps $\alpha^i:  X_{i+n} \to X_n$, for any $i > 0$.  Let $x_n := \zeta_n(\mf{m})$.  Define
\[ a_n^{-1} (\sh{S}_j) := a_n^{-1}(\sh{I}_j) \otimes_{X_n} a_n^* \Lsh_j.\]
Then $a_n^{-1}(\sh{S}_j)$ is flat at $x_n$ for all $j$.  Let
\[ \sh{M} := \bigoplus_{j \geq 0} a_n^{-1}(\sh{S}_j)_{x_n}.\]
Then $\sh{M}$ is flat over $C$ and is the $C$-point module corresponding to $\zeta$.   The $C$-action on $\sh{M}$ is obvious; to define the $\sh{S}$-action on $\sh{M}$, let $x := a_n(x_n)$.  Then there are maps
\[ \xymatrix{
\sh{M}_j \otimes_{\kk} \sh{S}_i(X) \ar[r] &
\sh{M}_j \otimes_{\kk} \sh{S}_i^{\sigma^j}(X) \ar[r] &
\sh{M}_j \otimes_{\struct_{X,x}} (\sh{S}_i^{\sigma^j})_x \ar[r] &
a_n^{-1}(\sh{S}_{j+i})_{x_n}. } \]
This gives a right $\sh{S}$-action on $\sh{M}$; by letting $C$ act naturally on the left and identifying $C$ with $C^{\op}$ we obtain an action of $\sh{S}_C$ on $M$.  

If $C$ is a local ring, we do not know if $S_C$ is necessarily  noetherian.  However, $C$-point modules in $\rQgr S_C$ are well-behaved, as follows.

\begin{lemma}\label{lem-local-torsion-cat}
Let $C$ be a commutative noetherian local $\kk$-algebra.  Let $N$ and $M$ be  $\ell$-shifted $C$-point modules, with $ M \cong  N $ in $\rQgr S_C$.  Then for some $k$ we have $M_{\geq k} \cong N_{\geq k}$.
\end{lemma}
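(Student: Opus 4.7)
The plan is to lift the $\rQgr$-isomorphism $\overline{\phi}: \overline{M} \to \overline{N}$ to a genuine graded $S_C$-module homomorphism and then show that its restriction to a suitable tail is bijective. The crucial preliminary observation is that a point module has zero torsion subobject in $\rGr S_C$: given $0 \neq m \in M_n$ with $n \geq \ell$, choose a $C$-generator $m_n'$ of the rank-one free $C$-module $M_n$ (using that $C$ is local) and write $m = c\,m_n'$ with $0 \neq c \in C$. Cyclicity of $M$ together with freeness of each $M_{n+k}$ forces $m_n'\,(S_C)_k = M_{n+k}$ as $C$-submodules, whence $(mS_C)_{n+k} = c\,M_{n+k}$, which is isomorphic to $cC \neq 0$ because $c$ acts faithfully on $M_{n+k} \cong C$. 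Thus $mS_C$ is nonzero in every degree, so no nonzero element of $M$ can sit in a bounded submodule; the same argument applies to $N$. Since $N$ has no nonzero torsion submodule, every morphism in $\rQgr S_C$ from $\overline{M}$ to $\overline{N}$ is represented by a genuine graded $S_C$-linear map $\phi: M' \to N$ with $M/M' \in \rTors$.

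Since $M$ is cyclic, the quotient $M/M'$ is cyclic and torsion, hence bounded (its generator lies in a bounded submodule, which must therefore equal the whole module). So $M_{\geq k_1} \subseteq M'$ for some $k_1$, and after restriction we may assume $M' = M_{\geq k_1}$. Because $\overline{\phi}$ is an isomorphism, both $\ker \phi$ and $\coker \phi = N/\phi(M_{\geq k_1})$ lie in $\rTors$. By the preceding paragraph, $\ker \phi = 0$ automatically. For the cokernel, $N/\phi(M_{\geq k_1})$ is a cyclic quotient of $N$, hence cyclic and torsion, hence again bounded; so there exists $k_2$ such that $\phi_n : M_n \to N_n$ is surjective for every $n \geq k_2$.

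Setting $k := \max(k_1, k_2)$, the restriction $\phi|_{M_{\geq k}}: M_{\geq k} \to N_{\geq k}$ is a graded $S_C$-module map that is injective on every graded piece (since $\ker \phi = 0$) and surjective on every graded piece (by boundedness of the cokernel), hence is the desired isomorphism. The main technical point is the torsion-freeness step in the first paragraph; this is precisely where one uses that $C$ is local, so that $M_n$ is free of rank one and a nonzero element of $C$ cannot annihilate $M_{n+k} \cong C$. Over a more general commutative $\kk$-algebra one would need to localize or otherwise reduce to the local situation before running this argument.
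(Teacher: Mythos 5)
Your proof is correct, and its skeleton (torsion-freeness of point modules, lifting the $\rQgr S_C$-isomorphism to an honest graded map $\phi\colon M'\to N$, then bounding the kernel and cokernel) is the same as the paper's; but the way you handle the crucial boundedness step is genuinely different. The paper shows that a torsion quotient $T$ of a point module vanishes in large degrees by base-changing to the residue field $L$ of $C$ (using $C$-flatness to control $\Tor_1^C(T,L)$), invoking \cite[Theorem~5.1]{ASZ1999} to conclude that $S_L$ is noetherian, so that $T_L$ is noetherian and torsion, hence finite-dimensional, and then descending with Nakayama's lemma. You instead exploit the fact that, since $C$ is local and $S$ is generated in degree one (a standing assumption worth citing explicitly, since it is what makes $M_\ell\cdot S_C=M$ and hence $M$ cyclic), every relevant torsion module is a quotient of a cyclic module, and a cyclic torsion module is bounded for purely formal reasons: its generator already lies in a bounded submodule. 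This buys you a more elementary argument that avoids the generic-flatness/noetherianity input entirely and does not even use that $C$ is noetherian; what the paper's route buys is independence from cyclicity, which is why it phrases the key step for an arbitrary graded submodule with torsion quotient. Your explicit verification that point modules are torsion-free --- via $(mS_C)_{n+k}=c\,M_{n+k}\cong cC\neq 0$ --- is also a useful expansion of a step the paper only asserts.
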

\begin{proof}
The torsion submodules of $M$ and $N$ are trivial.  Thus
\[ \Hom_{\rQgr S_C}(M, N) = \varinjlim \Hom_{\rGr S_C}(M', N),\]
where the limit is taken over all submodules $M' \subseteq M$ with $M/M'$ torsion.  If $ M \cong  N$ in $\rQgr S_C$, then there is some submodule $M' \subseteq M$ so that $M/M'$ is torsion, and so that there is a homomorphism $f:  M' \to N$ so that  $\ker f$ and $N/f(M')$ are torsion.  Since $M$ and $N$ are torsion-free, we must have $\ker f = 0$.  

Thus it suffices to show that if $N$ is an $\ell$-shifted $C$-point module and $M \subseteq N$ is a graded submodule with $T:= N/M$ torsion, then $T_n=0$ for $n\gg 0$.  

Let $L$ be the residue field of $C$.  
 By assumption, $N$ is $C$-flat.  Thus there is an exact sequence
\[ 0 \to \Tor_1^C(T, L) \to M_L \to N_L \to T_L \to 0.\]
By \cite[Theorem~5.1]{ASZ1999}, the algebra $R_L$ is noetherian.  Thus $N_L$ and $T_L$ are  also noetherian.  Since $T_L$ is torsion, it is finite-dimensional.  Thus for $n \gg 0$, we have $(T_L)_n = (T_n)\otimes_C L = 0$.  By Nakayama's Lemma, $T_n = 0$.
\end{proof}

\begin{lemma}\label{lem-local-Psi}
Let $C$ be a noetherian local ring, and let $\sh{M}$, $\sh{N}$ be $C$-point modules over $\sh{S}$, corresponding to morphisms 
$f_{\sh{M}},f_{\sh{N}}: \Spec C \to {}_0 \Zt$.  If $\sh{N} \sim \sh{M}$, then there is some $k$ so that $\Psi^k f_{\sh{M}} = \Psi^k f_{\sh{N}}$.
\end{lemma}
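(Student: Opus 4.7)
The plan is to reduce the lemma to Lemma~\ref{lem-local-torsion-cat} by taking global sections, then use the identification of fine moduli stacks in Theorem~\ref{thm:ptschemes} to pull the resulting isomorphism back to the $\sh{S}$ side. By construction of $\Psi$, the equation $\Psi^k f_{\sh{M}} = \Psi^k f_{\sh{N}}$ in ${}_0\Zt(C)$ is equivalent to the existence of an isomorphism $\sh{M}[k]_{\geq 0}\cong \sh{N}[k]_{\geq 0}$ of embedded $C$-point modules over $\sh{S}$. Since reindexing does not change the underlying module, it suffices to find $k\geq m$ (with $m$ a positivity parameter) together with an $\sh{S}_C$-isomorphism $\sh{M}_{\geq k}\cong \sh{N}_{\geq k}$ that intertwines the canonical surjections from $\sh{S}_C$; because $C$ is local, any module isomorphism may be rescaled to an intertwining one by a unit of $C$ acting on the rank-one free degree-$k$ component.

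Set $M := H^0(X_C,\sh{M})$ and $N := H^0(X_C,\sh{N})$. The functor $H^0(X_C,-)$ sends direct limits of coherent $\struct_{X_C}$-modules to direct limits of right-bounded $S_C$-modules, so it descends to a functor $\rQgr\sh{S}_C\to\rQgr S_C$; the hypothesis $\sh{M}\sim\sh{N}$ therefore yields $M\cong N$ in $\rQgr S_C$. Choosing $\ell\geq m$ and truncating, Corollary~\ref{cor-ptideal}(1) guarantees that $M_{\geq \ell}$ and $N_{\geq \ell}$ are genuine $\ell$-shifted $C$-point modules over $S$, still isomorphic in $\rQgr S_C$. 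Lemma~\ref{lem-local-torsion-cat} now applies, producing an $S_C$-module isomorphism $\phi: M_{\geq k}\to N_{\geq k}$ for some $k\geq m$; after rescaling as above, $\phi$ intertwines the $S_C$-surjections, so $M_{\geq k}$ and $N_{\geq k}$ determine the same $C$-point of ${}_k\Yt$.

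Finally, Theorem~\ref{thm:ptschemes} identifies ${}_k\Yt$ with ${}_k\Zt$ because $k\geq m$; by construction of $s_k$ (Proposition~\ref{prop:immersion of point schemes}), this identification sends the class of an embedded $\sh{S}_C$-point module to the class of its image under $H^0(X_C,-)$. Hence the common $C$-point of ${}_k\Yt$ determined by $M_{\geq k}\cong N_{\geq k}$ corresponds to a common $C$-point of ${}_k\Zt$, which must then be represented by both $\sh{M}_{\geq k}$ and $\sh{N}_{\geq k}$. Reindexing the grading by $k$ turns this into an isomorphism $\sh{M}[k]_{\geq 0}\cong \sh{N}[k]_{\geq 0}$ of embedded $0$-shifted $\sh{S}_C$-point modules, which by the first paragraph is exactly the statement $\Psi^k f_{\sh{M}} = \Psi^k f_{\sh{N}}$.

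The main obstacle is the middle step: ensuring that the equivalence in $\rQgr \sh{S}_C$ genuinely descends to one in $\rQgr S_C$ and that Lemma~\ref{lem-local-torsion-cat} can be invoked, even though $S_C$ is not known to be noetherian. The key observation is that the proof of Lemma~\ref{lem-local-torsion-cat} relies only on noetherianness of the reduction $S_L$ for $L$ the residue field of $C$ (via \cite[Theorem~5.1]{ASZ1999}), so the argument goes through. A secondary technical issue is the compatibility between $S_C$-module isomorphisms of truncated point modules and $\sh{S}_C$-module isomorphisms of the corresponding sheaves, but this is handled cleanly by the isomorphism ${}_k\Yt\cong{}_k\Zt$ once $k\geq m$.
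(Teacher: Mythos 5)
Your proposal is correct and follows essentially the same route as the paper's proof: apply the global sections functor $s$ to the truncations, invoke Lemma~\ref{lem-local-torsion-cat} over the local ring $C$ to get $M_{\geq k}\cong N_{\geq k}$, transfer back via the isomorphism ${}_k\Yt\cong{}_k\Zt$ of Theorem~\ref{thm:ptschemes}, and conclude from the definition of $\Psi$. The extra remarks you make (descent of $H^0$ to the quotient categories, and the fact that an abstract isomorphism of cyclic modules over the local ring $C$ determines the same embedded point since the surjection from $\sh{S}_C$ is unique up to a unit) are correct elaborations of steps the paper leaves implicit.
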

\begin{proof}
Let $m$ be a \pp~ and let $M: = s(\sh{M}_{\geq m})$ and $N:=  s(\sh{N}_{\geq m})$.  Then $M \sim N$; by Lemma~\ref{lem-local-torsion-cat} there is some $k$, which we may take to be at least $m$, so that $M_{\geq k} \cong N_{\geq k}$.  Since ${}_k \Zt \cong {}_k \Yt$, we have $\sM_{\geq k} \cong \sN_{\geq k}$.  Thus the modules $\sh{M}[k]_{\geq 0} \cong \sh{N}[k]_{\geq 0}$ correspond to the same point of ${}_0 \Zt$; that is,  $\Psi^k f_{\sh{M}} = \Psi^k f_{\sh{N}}$.
\end{proof}

We now show that $X$ is a coarse moduli space for ${}_0\Zt/\sim$.  In fact, we prove this result in greater generality, to be able to analyze the spaces ${}_{\ell} \Yt$.  

\begin{proposition}\label{prop-coarse}
Let $\Zt:= {}_0\Zt$.  
Let $\Vt$ be a closed algebraic substack of $\Zt$ so that 
$\Xt \subseteq \Vt \subseteq \Zt$, and assume that $\displaystyle \Vt = \underset{\longleftarrow}{\lim}\; V_n$ where $V_n \subset Z_n$
is a closed subscheme that maps into $V_{n-1}$ under $Z_n\rightarrow Z_{n-1}$ for all $n$.  
Then $X$ is a coarse moduli space for $\Vt/\sim$.  More precisely, let $\sh{H}$ be the  image of $\Vt$ under $\mu:  \Zt \to \sh{G}$.   Then:
\begin{enumerate}
\item The morphism $\Phi: \Vt\to X$ factors via $\Vt\xrightarrow{\mu} \sh{H} \xrightarrow{t} X$.
\item Every morphism $\sh{H} \to A$ where $A$ is a scheme (of finite type) factors uniquely through $\sh{H} \xrightarrow{t} X$.
\end{enumerate}
\end{proposition}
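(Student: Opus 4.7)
The plan is to establish Part~(1) via a descent argument using Lemma~\ref{lem-local-Psi}, and to establish Part~(2) by first showing that $\mu\xi$ depends only on $\Phi\xi$ and then producing $\eta':X\to A$ by a blowup-rigidity argument.

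For Part~(1), since $\mu:\Vt\to\sH$ is by definition a sheaf-theoretic surjection, producing $t:\sH\to X$ with $t\mu=\Phi$ amounts to verifying that $\mu\xi_1=\mu\xi_2$ implies $\Phi\xi_1=\Phi\xi_2$ for all $\xi_1,\xi_2\in\Vt(C)$.  Since $X$ is an fpqc sheaf and equality of morphisms to a scheme may be tested on local rings, I may pass to an fpqc cover and then reduce to the case where $C$ is a noetherian local $\kk$-algebra and $\xi_1\sim\xi_2$, i.e., the corresponding point modules $\sM_1,\sM_2$ are isomorphic in $\rQgr\sS_C$.  Lemma~\ref{lem-local-Psi} yields an integer $k$ with $\Psi^k f_{\sM_1}=\Psi^k f_{\sM_2}$; iterating the identity $\Phi\Psi=\sigma\Phi$ gives $\sigma^k\Phi\xi_1=\sigma^k\Phi\xi_2$, and applying $\sigma^{-k}$ yields $\Phi\xi_1=\Phi\xi_2$.

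For Part~(2), uniqueness is immediate: if $\eta'_1,\eta'_2:X\to A$ both satisfy $\eta'_it=\eta$, then $\eta'_1\Phi=\eta'_2\Phi$ on $\Vt$; restricting to $\Xt\subseteq\Vt$ and using that $\Xt|_U\cong U$ for $U:=X\smallsetminus\{z_m\}_{m\in\NN}$, the two maps agree on the dense open $U\subset X$, whence they coincide since $A$ is separated and $X$ is integral.  The essential new ingredient for existence is the following claim: \emph{for any commutative $\kk$-algebra $C$ and any two $C$-point modules $\sM_1,\sM_2$ over $\sS_C$ with the same support $y\in X(C)$, one has $\mu\sM_1=\mu\sM_2$ in $\sG(C)$.}  Over a noetherian local $C$, this follows from the structural argument in the proof of Theorem~\ref{thm:ptschemes}: critical density of $P$ and right ampleness of $\{p^*\sS_n\}$ (Lemma~\ref{lem-ample}) force $(\sM_i)_n\cong\sO_y\otimes p^*\sL_n$ for $n\gg 0$, so $\sM_1$ and $\sM_2$ agree in high degree and are identified in $\rQgr\sS_C$.

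Granted the claim, I construct $\eta'$ as follows.  The fpqc atlas $U=\bigsqcup U_i$ from Proposition~\ref{prop-algspace}, with each $U_i=\varprojlim U_{m,i}$ an inverse limit of affine schemes along affine transition maps, yields via \cite[IV,~Proposition~8.13.1]{EGA} a factorization of each $U_i\to A$ through some $U_{m(i),i}\to A$; for $m\geq\max_i m(i)$, compatibility on overlaps (also tested via \textit{loc.~cit.}) shows that these affine charts glue to a morphism $g:X_m\to A$ with $\eta\mu|_{\Xt}$ factoring as $\Xt\to X_m\xrightarrow{g} A$.  The claim forces $g$ to send each fiber of the blowup $\pi:X_m\to X$ to a single closed point of $A$; since $\pi$ is projective with connected fibers and $\pi_*\sO_{X_m}=\sO_X$, the classical rigidity of proper birational morphisms provides a unique factorization $g=\eta'\circ\pi$ for some $\eta':X\to A$.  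Finally, the identity $\eta'\Phi=\eta\mu$ on all of $\Vt$ is checked fpqc-locally: for $\xi\in\Vt(C)$ with support $y=\Phi\xi$, one lifts $y$ (after fpqc base change if necessary, using that $\Phi|_\Xt$ is surjective on $\kk$-points and each fiber $\Xt_{z_m}$ has $\kk$-points) to some $\xi'\in\Xt(C')$; the claim gives $\mu\xi=\mu\xi'$ and hence $\eta\mu\xi=\eta\mu\xi'=\eta'\Phi\xi$, which then descends by the sheaf property.

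The principal obstacle is the rigidity/descent step, requiring that set-theoretic contraction of the fibers of $X_m\to X$ by $g$ be scheme-theoretic.  This relies on the fibers being reduced and connected (standard for blowups at ideals supported on disjoint reduced points, so at least when $P$ is reduced; a mild reducedness argument handles the general case) together with the standard rigidity principle for proper morphisms with trivial derived pushforward.  A secondary technical point is the reduction of morphisms from inverse limits of affine schemes to a finite-type target to factor through a finite stage, handled by \cite[IV,~8.13.1]{EGA}.
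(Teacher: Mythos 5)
Your Part~(1) is correct and is the same argument as the paper's: reduce to local $C$ (legitimate because the target $X$ is a scheme, so equality of morphisms can be tested on local rings), apply Lemma~\ref{lem-local-Psi}, and use $\Phi\Psi=\sigma\Phi$. Your uniqueness argument in Part~(2) is also fine. The existence argument in Part~(2), however, takes a genuinely different route from the paper's and has a real gap at the end. You construct $g:X_m\to A$ only on the \emph{relevant component} $X_m\cong Z_m^o$ (via the atlas of $\Xt$ and the limit theorem of EGA~IV), contract it to $\eta':X\to A$, and then must verify $\eta'\circ\Phi=\nu\circ\mu$ on \emph{all} of $\Vt$, which may be strictly larger than $\Xt$. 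Your proposed verification lifts an arbitrary $\xi\in\Vt(C)$ to a point of $\Xt$ over $y=\Phi\xi$ after fpqc base change, justified only by surjectivity of $\Phi|_{\Xt}$ on $\kk$-points. That is not enough: $\Xt\to X$ is an (infinite) blowup, hence not flat, and a $C$-point of $X$ centered at a fundamental point $z_m$ (e.g.\ a non-reduced $C$-point) need not lift to $\Xt$ at all, and faithfully flat base change on $C$ cannot create such a lift. So the comparison between points of $\Vt\smallsetminus\Xt$ and points of $\Xt$ is not established, and the factorization $\nu=\eta'\circ t$ is only proved on the image of $\Xt$ in $\sH$.

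The paper sidesteps exactly this problem by building the intermediate map at a \emph{finite} level on the whole scheme $V_n$, not just on $X_n$: over the dense open $U=X\smallsetminus\{z_1,\dots,z_r\}$ it defines $U\to A$ via the canonical sections $\operatorname{Spec}\sO_{X,x}\to\Xt\to\Vt$, and over the remaining finitely many points it uses that $\Phi_n:\Vt\to V_n$ is a local isomorphism there for $n\gg0$, so that $\nu\circ\mu$ on local rings of $\Vt$ literally \emph{is} a map on local rings of $V_n$. This produces $\theta:V_n\to A$ with $\theta\circ\Phi_n=\nu\circ\mu$ holding on all of $\Vt$ by construction; the remaining work (contracting the fibers $f_n^{-1}(z_j)\cap V_n$ using $\Psi^k$ and the local-isomorphism property, then descending $\theta$ scheme-theoretically through $X$ using smoothness of $X$ at the $z_j$) is close in spirit to your rigidity step, which is essentially sound. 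A secondary weakness in your write-up: your key ``claim'' is asserted for arbitrary commutative $C$ but argued only for (finite-dimensional or) local noetherian $C$, and equality in the sheafification $\sG(C)$ cannot be tested on the family of localizations of $C$, which is not an fpqc cover; fortunately the only place you genuinely need the claim beyond $C=\kk$ is the problematic final step. If you reorganize the existence proof so that the intermediate morphism is defined on $V_n$ rather than on $X_m$, your argument can be repaired along the paper's lines.
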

\begin{proof}
(1)
It suffices to prove that if $C$ is a commutative noetherian ring and $\sh{M} \sim \sh{N}$ are $C$-point modules over $\sh{S}$, corresponding to maps $f_{\sh{M}}, f_{\sh{N}} : \Spec C \to \Zt$,  then we have $\Phi f_{\sh{M}} = \Phi f_{\sh{N}}:  \Spec C \to X$.
To show this,  it suffices to consider the case that $C$ is local.
  By Lemma~\ref{lem-local-Psi},  $\Psi^k f_{\sh{M}} = \Psi^k f_{\sh{N}}$ for some $k$.  Thus
\[ \Phi f_\sM = \sigma^{-k} \Phi \Psi^k f_\sM = \sigma^{-k} \Phi \Psi^k f_\sN = \Phi f_\sN,\]
as required.

(2)
Let $\nu : \sh{H} \to h_A$ be a natural transformation for some scheme $A$.  For all $n \in \NN$, let $P_n$ be the subscheme of $X$ defined by $\sI_n$.  
Fix any closed point $x \in X \smallsetminus \bigcup P_n$; some such $x$ exists since $\kk$ is uncountable.  Let $C := \struct_{X,x}$.  The induced map
$\Spec C \to \Xt \to \Vt$
gives a $C$-point module $\sh{M}_x$ as described above; its $\sim$-equivalence class is an  element of $\sh{H}(C)$.  Applying $\nu$, we therefore have a morphism 
$\Spec C  \to A.$
This extends to a morphism 
$g_x: U_x \to A$
for some open subset $U_x$ of $X$.  It follows from critical density of the orbits of points in $P$ that $X \smallsetminus \bigcup P_n$ is quasi-compact.  Thus we may take finitely many $U_x$, say $U_1, \ldots, U_k$, that cover $X \smallsetminus \bigcup P_n$, with maps $g_i: U_i \to A$.   
These maps all agree on the generic point of $X$ and so agree on overlaps $U_i \cap U_j$.

 Let 
$\displaystyle U:= \bigcup_{i=1}^k U_i,$
and let $g: U \to A$ be the induced map.
Then $X \ssm U \subseteq \bigcup P_n$ is a closed subset of $X$, and so 
$X \ssm U = \{z_{1}, \ldots, z_{r}\}$
for some $z_1, \ldots, z_r \in \bigcup P_n$.  
Let $n$ be such that  for any $i > 0$, the map $\phi^i:  Z_{n+i} \to Z_n$ is a local isomorphism at all points in the preimage of $\{z_{1} ,\ldots, z_{r}\}.$   

Let $\Phi_n:  \Zt \to Z_n$ be the map induced from the $\phi_m$.     
There is an induced map
$f_n^{-1}(U) \cap V_n \to U \to A.$
Further, for every $y \in \Vt \smallsetminus \Phi^{-1}(U)$, 
the map
$\Spec \sO_{\Vt, y} \to \Vt \to \sH \to A$
factors through $\Vt \to V_n$, as $\Phi_n$ is a local isomorphism at $y$. We thus obtain morphisms
$\Spec \sO_{V_n,y} \to A$
for all $y \in V_n$.  Using these, we may extend $g$ to give a morphism
  $\theta:  V_n \to A$
so that the diagram
\[ \xymatrix{
\Vt \ar[r]^{\mu} \ar[d]_{\Phi_n}		& \sH \ar[d]^{\nu} \\
V_n \ar[r]_{\theta} \ar@{-->}[d]_{f_n} 	& A \\
U \ar[ru]_{g} & 
}\]
commutes. 

We claim that $\theta$ contracts each of the loci $f_n^{-1}(z_{j})\cap V_n$ to a point.  To see this, let
$x, y \in \Phi^{-1}(z_{j}) \cap \Vt$, corresponding to maps
$f_x, f_y:  \Spec \kk \to \Vt$.  We must show that
$\theta \Phi_n f_x = \theta \Phi_n f_y$.

Since for $k \gg 0$, $\sigma^k(z_{j})$ is not in $\bigcup P_n$, $\Phi$ is a local isomorphism at $\Psi^k(\Phi^{-1}(z_{j}))$.  
We  have
\[ \Phi \Psi^k f_x =  \sigma^k \Phi f_x =  \sigma^k \Phi f_y = \Phi \Psi^k f_y\]
and so $\Psi^k f_x = \Psi^k f_y$.  
Therefore, $\mu f_x = \mu f_y$, and so
\[ \theta \Phi_n f_x = \nu \mu f_x = \nu \mu f_y = \theta \Phi_n f_y,\]
as we wanted.

The morphism $\theta:  V_n \to A$ thus factors set-theoretically to give a map from $X$ to $A$.  Since $X$ is smooth at all $z_i$ by critical density of the orbits of the $z_i$, it is well known that $\theta$ also factors scheme-theoretically.
\noindent
Consequently, we have the morphism $X \to A$ that we sought.  This proves Proposition \ref{prop-coarse}.
\end{proof}

\begin{theorem}\label{thm-coarse}
Fix a \pp~ $m$ (Definition \ref{positivity parameter}) and let $\ell\geq m$.  Then $X$ is a coarse moduli space for $G = F/\sim$. 
\end{theorem}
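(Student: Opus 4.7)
Plan: The strategy is to identify $G$ with the analogous functor $\sh{G}$ built from $\sh{S}$-point modules via Theorem~\ref{thm:ptschemes}(2), then adapt the proof of Proposition~\ref{prop-coarse}(2) to the $\ell$-shifted setting. The section-functor isomorphism $s:{}_\ell\Zt\xrightarrow{\cong}{}_\ell\Yt$ identifies $\sh{F}$ with $F$. The equivalence relations $\sim$ correspond under $s$: at $\kk$-points by Theorem~\ref{thm-VdBSerre}, and for local $C$-points by using $s$ to transfer Lemma~\ref{lem-local-torsion-cat} into the $\sh{S}_C$-module setting. Hence $G \cong \sh{G}$, and it suffices to show $X$ corepresents $\sh{G}$.

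I will use the limit $\Phi:\sh{F}={}_\ell\Zt\to X$ of the support maps $f_n:{}_\ell Z_n\to X$ as the candidate natural transformation. To descend $\Phi$ to $\sh{G}$: if $C$ is local and $\sh{M}\sim\sh{N}$ in $\sh{F}(C)$, I apply Lemma~\ref{lem-local-torsion-cat} (via $s$) to obtain $\sh{M}_{\geq k}\cong\sh{N}_{\geq k}$ as $\sh{S}_C$-modules for some $k\geq\ell$, which forces equal supports in $X_C$. For the universal property, given $\nu:\sh{G}\to h_A$ with $A$ a scheme, I construct $X\to A$ as follows. For each closed $x\in X\smallsetminus\bigcup P_n$, the local ring $\sO_{X,x}$ has a canonical lift to $\Xt\subseteq {}_\ell\Zt$ (since the blowups $\alpha_n$ are isomorphisms at $x$), giving a class in $\sh{G}(\sO_{X,x})$ and thence a morphism $\Spec\sO_{X,x}\to A$ that extends to a neighborhood $U_x\to A$. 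Finitely many $U_{x_i}$ cover the quasi-compact set $X\smallsetminus\bigcup P_n$, and these glue to $g:U\to A$ on an open subset $U\subseteq X$ with finite complement $\{z_1,\dots,z_r\}\subseteq\bigcup P_n$. Choosing $n$ large enough that $\Phi_n:{}_\ell\Zt\to {}_\ell Z_n$ is a local isomorphism at each point of $\Phi^{-1}(z_j)$, I extend $g$ to a morphism $\theta:{}_\ell Z_n'\to A$ on the essential point scheme.

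The crux is to show $\theta$ contracts each $f_n^{-1}(z_j)\cap {}_\ell Z_n'$ set-theoretically to a point. For two $\kk$-points $x,y$ in this fiber, the corresponding $\ell$-shifted embedded modules $\sh{M}_x,\sh{M}_y$ have common support $z_j$; by Theorem~\ref{thm-RS}(3), the simple objects in $\rqgr\sh{S}$ are in bijection with the closed points of $X$, so both are isomorphic in $\rqgr\sh{S}$ to the unique simple object at $z_j$. Hence $\sh{M}_x\sim\sh{M}_y$ and $\theta(x)=\theta(y)$. Set-theoretic contraction then extends scheme-theoretically using smoothness of $X$ at the $z_j$ (guaranteed by critical density of the orbits), giving the desired morphism $X\to A$. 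The main obstacles will be the contraction step—verifying the $\rqgr\sh{S}$-isomorphism carefully via KRS and promoting set-theoretic factoring to a scheme-theoretic morphism—together with handling the compatibility of $\sim$ between $\rQgr S_C$ and $\rQgr\sh{S}_C$ for non-$\kk$ local bases $C$, where Theorem~\ref{thm-VdBSerre} does not apply directly.
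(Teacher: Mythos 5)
Your proposal is correct in outline and shares the paper's overall architecture (reduce to $\sh{S}$-point modules via Theorem~\ref{thm:ptschemes}(2), use the support map $\Phi$ as the corepresenting transformation, check descent through $\sim$ on local rings, build the universal map $X\to A$ by covering $X\smallsetminus\bigcup P_n$, extending over the finitely many exceptional points, contracting fibers, and descending set-theoretically then scheme-theoretically). Where you genuinely diverge is in how the two key sub-steps are handled. The paper does \emph{not} rerun the argument in the $\ell$-shifted setting: it pushes ${}_\ell\Zt$ forward to ${}_0\Zt$ via the shift map $L(\sh{M})=\sh{M}[\ell]$ and invokes Proposition~\ref{prop-coarse}, which is stated for closed substacks $\Xt\subseteq\Vt\subseteq{}_0\Zt$ precisely so it can be reused here. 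Inside Proposition~\ref{prop-coarse}, both the descent of $\Phi$ through $\sim$ and the fiber contraction are proved with the shift endomorphism $\Psi$ of ${}_0\Zt$ and the intertwining relation $\Phi\Psi=\sigma\Phi$ (via Lemma~\ref{lem-local-Psi}): one shifts far enough that $\Phi$ becomes a local isomorphism and concludes $\Psi^k f_x=\Psi^k f_y$, hence $\mu f_x=\mu f_y$. You instead argue descent directly from Lemma~\ref{lem-local-torsion-cat} (equal tails force equal supports — fine, and arguably cleaner), and you replace the contraction argument by an appeal to Theorem~\ref{thm-RS}(3). That appeal is where you should be careful: as stated, Theorem~\ref{thm-RS}(3) is a bijection between isomorphism classes of \emph{simple} objects of $\rqgr\sS$ and closed points of $X$; to conclude that two $\ell$-shifted point modules with support $z_j$ are isomorphic in $\rqgr\sS$ you also need that their images are simple (this is \cite[Corollary~4.11]{RS-0}, quoted after Theorem~\ref{thm-RS}, and uses that $S$ is generated in degree one) and that the bijection is implemented by the support of the tail. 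Both facts are true, but the paper's $\Psi^k$-argument is self-contained and proves exactly the instance of the classification that is needed, from the geometry of the maps $f_n$, $\psi_n$; your route outsources this to the cited classification. The trade-off: the paper's detour through $L$ and Proposition~\ref{prop-coarse} avoids ever defining a shift operator on ${}_\ell\Zt$ for $\ell>0$ (where $\psi_n$ is not defined in the text), while your direct approach avoids $L$ and $\Psi$ altogether at the price of leaning on Keeler--Rogalski--Stafford. Either way the result follows; just make the two auxiliary facts about Theorem~\ref{thm-RS}(3) explicit if you write this up.
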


\begin{proof}
As above, we denote  the functor of $\ell$-shifted point modules over $\sh{S}$ modulo $\sim$ by $\sh{G}$.
By Theorem~\ref{thm:ptschemes}(2), it is  enough to show that $X$ is a coarse moduli space for $\sh{G}$.  Let $L:  {}_{\ell} \Zt \to {}_0 \Zt$ be the map that sends
$\sh{M} \mapsto \sh{M}[\ell]$.    Notice that if $\sh{M}$, $\sh{N}$ are $\ell$-shifted point modules, then  $\sh{M} \sim \sh{N}$ if and only if $\sh{M}[\ell] \sim \sh{N}[\ell]$.  That is, if we let  $\sh{G}'$ be the functor of (unshifted) point modules over $\sh{S}$ modulo $\sim$, then $L$ induces an inclusion $  \sh{G} \to \sh{G}'$ so that the diagram
\[ \xymatrix{
{}_{\ell}\Zt \ar[r]^L \ar[d]_{\mu} & {}_0 \Zt \ar[d]^{\mu} \\
\sh{G} \ar[r]_L & \sh{G}'
}
\]
commutes.  Let $V_n := \operatorname{Im}({}_{\ell}Z_{\ell+n}\rightarrow {}_0Z_n)$ and $\displaystyle \Vt := \underset{\longleftarrow}{\lim}\, V_n$, so $\Vt =  L({}_{\ell}\Zt)$.

Note that $L$ is injective on $\Xt \subseteq {}_{\ell}\Yt$.   Thus $\Vt$ satisfies the hypotheses of Proposition~\ref{prop-coarse}, and so $X$ is a coarse moduli scheme for $\sh{H} = \mu(\Vt) \cong \mu({}_{\ell} \Zt) \cong \sG$.   
\end{proof}

\bibliographystyle{amsalpha}

\providecommand{\bysame}{\leavevmode\hbox to3em{\hrulefill}\thinspace}
\providecommand{\MR}{\relax\ifhmode\unskip\space\fi MR }
\providecommand{\MRhref}[2]{%
  \href{http://www.ams.org/mathscinet-getitem?mr=#1}{#2}
}
\providecommand{\href}[2]{#2}


\end{document}